\newtheorem{theorem}{Theorem}         
\newtheorem{proposition}{Proposition}
\newtheorem{cor}[proposition]{Corollary}
\newtheorem{lemma}[proposition]{Lemma}
\theoremstyle{definition}
\newtheorem{remark}[proposition]{Remark}
\numberwithin{equation}{section}
\renewcommand{\AA}{\mathcal A}
\newcommand{\Z}{\mathbb Z}
\newcommand{\N}{\mathbb N}
\newcommand{\R}{\mathbb R}
\newcommand{\C}{\mathbb C}
\newcommand{\Q}{\mathbb Q}
\newcommand{\EE}{\mathcal E}
\newcommand{\DD}{\mathcal D}
\newcommand{\BB}{\mathcal B}
\newcommand{\II}{\mathcal I}
\newcommand{\MM}{\mathcal M}
\newcommand{\MMM}{\mathfrak M}
\newcommand{\FF}{\mathcal F}
\newcommand{\sinc}{\operatorname{sinc}}
\newcommand{\llbracket}{[}
\newcommand{\rrbracket}{]}
\title{Limiting distribution of eigenvalues in the large sieve matrix v8.1}
\author{Florin P. Boca, Maksym Radziwi\l\l}
\address{FPB: Department of Mathematics, University of Illinois at Urbana-Champaign,
Urbana, IL 61801, USA}
\address{MR: Department of Mathematics, McGill University, Burnside Hall,
805 Sherbrooke Street West, Montreal, Quebec, Canada, H3A 0B9}
\address{E-mail: fboca@math.uiuc.edu}
\address{E-mail: maksym.radziwill@mcgill.ca}
\begin{document}

\begin{abstract}
The large sieve inequality is equivalent to the bound $\lambda_1 \leqslant N + Q^2-1$ for
the largest eigenvalue $\lambda_1$ of the $N$ by $N$ matrix $A^{\star} A$, naturally
associated to the positive definite quadratic form arising in the inequality.
For arithmetic applications the most interesting range is $N \asymp Q^2$.
Based on his numerical data
Ramar\'e conjectured that when $N \sim \alpha Q^2$ as $Q \rightarrow \infty$
for some finite positive constant $\alpha$,
the limiting distribution of the eigenvalues of $A^{\star} A$, scaled by $1/N$, exists and is non-degenerate.
 In this paper we prove this conjecture by establishing the convergence of all moments
of the eigenvalues of $A^{\star} A$ as $Q\rightarrow\infty$. Previously only the second moment was known, due to Ramar\'e.
Furthermore, we obtain an explicit description of the moments of the limiting distribution, and establish
that they vary continuously with $\alpha$. Some of
the main ingredients in our proof include the large-sieve inequality and results on $n$-correlations
of Farey fractions.
\end{abstract}



\date{\today}




\maketitle

\section{Introduction}\label{sec1}

Let $\FF_Q$ denote the set of Farey fractions of order $Q$, that is the set of reduced fractions
$\frac{a}{q}$ with $0 < a \leqslant q \leqslant Q$. In particular $|\FF_{Q}| = \sum_{q \leqslant Q} \varphi(q) \sim \frac{3}{\pi^2} Q^2$ as $Q \rightarrow \infty$.
The large sieve inequality states that, for any sequence of complex numbers $a(n)$,
\begin{equation} \label{largesieve}
\sum_{\theta \in \FF_{Q}} \Big | \sum_{n \leqslant N} a(n) e(n \theta) \Big |^2 \leqslant (N + Q^2 - 1) \sum_{n \leqslant N} |a(n)|^2 .
\end{equation}
The large sieve was first discovered by Linnik \cite{Linnik}, who applied it to bound the number of moduli $q$ for which the least quadratic non-residue exceeds $q^{\varepsilon}$. Since its inception the large sieve fascinated analytic number theorists, not the least because of the variety of its incarnations (probabilistic \cite{Renyi}, arithmetic \cite{Linnik}, analytic \cite{Davenport}). The form \eqref{largesieve} is the outcome of a long chain of improvements, due among others to Bombieri \cite{Bombieri}, Bombieri-Davenport \cite{Davenport}, Gallagher \cite{Gallagher}, Montgomery \cite{Mon}, Montgomery-Vaughan \cite{MoVaughan}, R\' enyi \cite{Renyi}, Roth \cite{Roth}, Selberg \cite{Selberg} and many other authors. One of the major applications of \eqref{largesieve} is the Bombieri-Vinogradov \cite{BV} theorem on primes in arithmetic progressions.

A fruitful point of view is to interpret \eqref{largesieve} in terms of the eigenvalues
$\lambda_1 \geqslant \lambda_2 \geqslant \ldots \geqslant \lambda_N \geqslant 0$ of the $N \times N$ symmetric
positive definite matrix,
$$
A^{\star} A = \Big ( \sum_{\theta \in \FF_{Q}} e\big( (n_1 - n_2) \theta\big) \Big )_{1 \leqslant n_1, n_2 \leqslant N}
\text{ where } A = \Big ( e(n \theta) \Big )_{\substack{\theta \in \FF_{Q} \\ 1 \leqslant n \leqslant N}}.
$$
Note that $\sqrt{\lambda_1} \geqslant \sqrt{\lambda_2} \geqslant \ldots \geqslant \sqrt{\lambda_N} \geqslant 0$ are the singular values of $A$ and
the following identity holds trivially:
\begin{equation*}
\sum\limits_{i\leqslant N} \lambda _i =\operatorname{Tr} (A^\star A) = \lvert \FF_Q\rvert N.
\end{equation*}
Since $\| A \mathbf{v} \|^2$ is equal to (\ref{largesieve}) when $\mathbf{v} = (a(1), \ldots, a(n))$
and $\lambda_1 = \| A \|^2 = \| A^{\star} A \| = \| A A^{\star} \|$
the large sieve inequality
\eqref{largesieve} is equivalent to $\lambda_1 \leqslant N + Q^2 - 1$.
It is very desirable, from the point of view of applications, to replace the inequality \eqref{largesieve} by an asymptotic equality. 
In the range $N < Q^{2 - \varepsilon}$ one can adapt the results of Conrey-Iwaniec-Soundararajan \cite{ALS} to obtain an asymptotic for a class of sequences $a(n)$.

We would like to investigate the problem of refining the large sieve inequality to an asymptotic equality in wide generality, and in particular in the
range $N \asymp Q^2$. This range is particularly interesting from an arithmetic point of view; for example
it comes up naturally in the proof
of the explicit Brun-Titchmarsh theorem.
As a first step in this direction, one would like to understand the limiting
distribution of the eigenvalues of $A^{\star} A$, that is the limiting distribution of the sequence of probability measures
on $[0,\infty)$ given by
$$
\mu_{Q,N} = \frac{1}{N} \sum_{i \leqslant N} \delta_{\lambda_i / N} ,
$$
where $\delta_{\lambda}$ denotes the Dirac probability measure supported at $\lambda \in \mathbb{R}$.
It turns out that this is relatively easy when the ratio $Q^2 / N$ either tends to infinity or to zero.
When $N/Q^2 \rightarrow \infty$ as $Q \rightarrow \infty$,
then since the rank of $A^{\star} A$ is $\leqslant Q^2$, it follows that most eigenvalues
are zero, therefore $\mu_{Q,N} \rightarrow \delta_{0}$.
On the other hand, when $ N / Q^2 \rightarrow 0$ as $Q \rightarrow \infty$,
then according to a deeper result of Ramar\'e \cite{Ra1} concerning the
asymptotic behaviour of $\sum_{i\leqslant N} \lambda_i^2$,
one concludes that when $N / Q^2 \rightarrow 0$ all but $o(N)$ of the eigenvalues cluster close to $|\FF_{Q}|$.
We will be concerned with the remaining regime $N \asymp Q^2$.


In \cite{Ra1, Ra2}
 Ramar\'e conducted several numerical experiments that suggested the existence of a \textit{non-degenerate}
limiting distribution function as soon as the ratio $\frac{N}{Q^2}$ tends to a finite limit with $Q \rightarrow \infty$. In support of
the numerical data
Ramar\'e established in \cite{Ra1} the convergence of the second moment
$$
\mathfrak{M}_{2}(Q) := \int_{0}^\infty t^2 d\mu_{Q,N}(t) = \frac{1}{N} \sum_{i \leqslant N} \Big ( \frac{\lambda_i}{N} \Big ) ^2
$$
as $Q \rightarrow \infty$. The form of the second moment (in particular its variation with $\frac{N}{Q^2}$)
ruled out the possibility of convergence to any standard probability law.

In this paper we estimate all moments of $\mu_{Q,N}$, given by
\begin{equation}\label{moments}
\begin{split}
\mathfrak{M}_{\ell}(Q) & := \int_{0}^\infty t^{\ell} d\mu_{Q,N}(t) = \frac{1}{N} \sum_{i \leqslant N} \Big ( \frac{\lambda_i}{N} \Big )^{\ell}
= \frac{1}{N}  \text{Tr} \Big ( \frac{A^{\star} A}{N} \Big )^{\ell} \\ &
= \frac{1}{N^{\ell + 1}} \sum_{\substack{\theta_1, \ldots, \theta_{\ell} \in \FF_{Q} \\ 1 \leqslant n_1, \ldots, n_{\ell}
\leqslant N}} e \big((n_1 - n_2) \theta_1 + (n_2 - n_3) \theta_2 + \ldots + (n_{\ell} - n_1) \theta_{\ell}\big),
\end{split}
\end{equation}
and prove Ramar\'e's conjecture, that is the weak$^*$-convergence of $\mu_{Q,N}$ to a limiting distribution when
$N \sim \alpha Q^2$:
\begin{cor} \label{Cmain}
Suppose that $N \sim \alpha Q^2$ as $Q \rightarrow \infty$ for some fixed constant $\alpha \in (0,\infty)$.
There exists a non-degenerate probability measure $\mu_{\alpha}$ on $[0,\infty)$ such that
$$
\mu_{Q,N} \stackrel{\operatorname{w}^*}{\longrightarrow} \mu_{\alpha}
$$
as $Q\rightarrow \infty$.
Moreover $\mu_{\alpha}$ is determined by its moments
$$
M_\ell (\alpha)=\int_{0}^\infty t^{\ell} d \mu_{\alpha}(t) ,
$$
explicitly described in Theorem \ref{main} below.
\end{cor}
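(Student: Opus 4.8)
Given Theorem~\ref{main}, this corollary is a routine application of the method of moments, the only mild subtlety being the verification of non-degeneracy. Here is the plan.

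First I would note that all the measures $\mu_{Q,N}$ are supported on a common compact interval. Indeed, the large sieve inequality \eqref{largesieve} is the bound $\lambda_1 \le N + Q^2 - 1$, so every scaled eigenvalue satisfies $0 \le \lambda_i/N \le 1 + (Q^2-1)/N$; since $N \sim \alpha Q^2$ this is at most $C_\alpha := 1 + 2/\alpha$ once $Q$ is large, hence $\operatorname{supp}\mu_{Q,N} \subseteq [0,C_\alpha]$ eventually. In particular the family $\{\mu_{Q,N}\}$ is tight, and any weak-$*$ limit point is a probability measure on the compact set $[0,C_\alpha]$.

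Next I would feed in Theorem~\ref{main}, which asserts $\mathfrak{M}_\ell(Q) \to M_\ell(\alpha)$ as $Q \to \infty$ for each fixed $\ell \ge 0$ (with $M_0(\alpha)=1$). If $\nu$ is any weak-$*$ limit point of $\{\mu_{Q,N}\}$ --- one exists by the previous paragraph and the weak-$*$ sequential compactness of probability measures on $[0,C_\alpha]$ --- then, since $t\mapsto t^\ell$ is bounded and continuous on $[0,C_\alpha]$, testing against it gives $\int_0^\infty t^\ell\,d\nu(t) = M_\ell(\alpha)$ for all $\ell$. A probability measure supported on a bounded interval is uniquely determined by its moments (polynomials are dense in $C([0,C_\alpha])$ by the Weierstrass theorem), so all such limit points coincide; denote the common measure $\mu_\alpha$. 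Because the space of probability measures on $[0,C_\alpha]$ is weak-$*$ compact and metrizable, a sequence with a single limit point converges to it, whence $\mu_{Q,N}\stackrel{\mathrm{w}^*}{\longrightarrow}\mu_\alpha$, and $\mu_\alpha$ is the unique measure with moments $M_\ell(\alpha)$.

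Finally, for non-degeneracy it is enough to show $\mu_\alpha$ is not a Dirac mass, i.e. $M_2(\alpha) > M_1(\alpha)^2$. The first moment is elementary: from $\operatorname{Tr}(A^\star A) = |\FF_Q|\,N$ we get $\mathfrak{M}_1(Q) = |\FF_Q|/N \to \tfrac{3}{\pi^2\alpha}$, so $M_1(\alpha) = \tfrac{3}{\pi^2\alpha}$. Comparing this with the explicit formula for $M_2(\alpha)$ supplied by Theorem~\ref{main} --- which in particular recovers Ramar\'e's second-moment asymptotic, whose $\alpha$-dependence already rules out the standard laws --- one checks the strict inequality directly; equivalently, if the $\mu_{Q,N}$ were asymptotically concentrated at a point $c$ then the previous paragraph would force $c = M_1(\alpha)$ and $c^2 = M_2(\alpha)$ simultaneously, contradicting the distinct dependence of the two moments on $\alpha$. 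I expect no real obstacle in the corollary itself: the genuinely hard work lies entirely in Theorem~\ref{main}, where the combinatorial sum in \eqref{moments} must be evaluated using the large sieve and results on $n$-correlations of Farey fractions; the corollary merely packages its output.
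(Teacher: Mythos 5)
Your compactness/method-of-moments argument is correct and is essentially the paper's: common compact support from the large sieve, weak-$*$ sequential compactness, identification of all limit points via Theorem \ref{main} and Weierstrass. That part needs no change.

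The gap is in the non-degeneracy step, which you dispatch with ``one checks the strict inequality directly'' plus a fallback appeal to ``the distinct dependence of the two moments on $\alpha$.'' Neither is a proof. The fallback is logically invalid: non-degeneracy must be established for each \emph{fixed} $\alpha$, and the fact that $M_2(\cdot)$ and $M_1(\cdot)^2$ are different \emph{functions} of $\alpha$ (even granting that) does not preclude $M_2(\alpha_0)=M_1(\alpha_0)^2$ at some particular $\alpha_0$. And the direct check is genuinely non-trivial, especially for small $\alpha$. What actually works (and is what the paper does) is the following. From Proposition \ref{T2.1}, $M_2(\alpha)=\frac{3}{\pi^2\alpha}+\bigl(\frac{3}{\pi^2}\bigr)^2\frac{2}{\alpha}\int_0^\infty \sinc^2(\pi\alpha u)\,g_2(\frac{3}{\pi^2}u)\,du > \frac{3}{\pi^2\alpha}$, and when $\alpha\geqslant 1$ one has $\frac{3}{\pi^2\alpha}<1$, hence $\frac{3}{\pi^2\alpha}>\bigl(\frac{3}{\pi^2\alpha}\bigr)^2=M_1(\alpha)^2$ and the variance is positive. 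This comparison fails once $\frac{3}{\pi^2\alpha}>1$, i.e.\ for small $\alpha$; there the paper writes the second moment via Ramar\'e's contour-integral formula, shifts the contour to pick up the pole contributing exactly $\bigl(\frac{3}{\pi^2\alpha}\bigr)^2$, and bounds the remaining integral in absolute value \emph{strictly} below $\frac{3}{\pi^2\alpha}$ using $\int_{\R}\frac{dt}{|1+it|^4}=\frac{\pi}{2}$. You need to supply an argument of this kind (or some other quantitative lower bound on $M_2(\alpha)-M_1(\alpha)^2$) for the full range $\alpha\in(0,\infty)$; as written, the corollary's key qualitative claim of non-degeneracy is unproved.
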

\begin{remark}
In principle our proof delivers a rate of convergence. For example, when $N = |\FF_{Q}|$ our approach shows
that convergence of the $\ell^{\operatorname{th}}$ moment occurs at a rate of at least $\ll Q^{-\delta_\ell}$ for some
exponent $\delta_\ell > 0$. Since our proof reveals that $\delta_{\ell} \ll \ell^{A}$ for some absolute constant $A > 0$, it is possible to show by Fourier analytic techniques that when $N = |\FF_{Q}|$ (and thus $\alpha = 3/\pi^2$)
there exists a $\delta > 0$, such that for any fixed smooth function $f$,
$$
\int_{0}^\infty f(t) d \mu_{Q,N}(t) = \int_{0}^\infty f(t) d \mu_{\alpha}(t) + O((\log Q)^{-\delta})
$$
as $Q \rightarrow \infty$. More generally this holds whenever $N = \alpha Q^2 + O(Q^{2 - \eta})$ for some $\eta > 0$.
%
\end{remark}
We include below an empirical approximation for the probability density function of $\mu_{3/\pi^2}$, based on an approximation
with $Q = 500$ and $N = |\FF_{500}| = 76116$ \footnote{The computation took less than a day, requiring 8 cores and 44GB of RAM memory. We used a custom C program invoking LAPACK linear algebra routines. The code is available on request.}. This resembles the previous data obtained by Ramar\'e \cite{Ra2}. There is a large number of eigenvalues in $[0,0.01]$ (roughly $10\%$) and we have omitted them from the graph (they contribute a disproportionate $6$ on the scale of the graph). Note that when $N = |\FF_{Q}|$ there are no eigenvalues that are equal to $0$, since in this case $\det A$ is just a non-zero Vandermonde determinant.
\begin{center}
\begin{tikzpicture}
  \begin{axis}[title= {Approximation for the p.d.f of $\mu_{3/\pi^2}$}, title style={align=center,font=\bfseries}]
    \addplot[mark=none] coordinates { 
 (1.0e-2,1.0444584581428347) (2.0e-2,0.7554259288454465) (3.0e-2,0.6109096641967523) (4.0e-2,0.5491618056650376) (5.0e-2,0.5333963949760891) (6.0e-2,0.49398286825371795) (7.0e-2,0.38231120920699985) (8.0e-2,0.39413526722371117) (9.0e-2,0.30873929265857375) (0.1,0.3363287613642335) (0.11,0.37048715119028847) (0.12,0.3455252509327868) (0.13,0.25356035524725423) (0.14,0.2772084712806769) (0.15,0.2890325292973882) (0.16,0.29166009774554624) (0.17,0.27195333438436076) (0.18,0.2575017079194913) (0.19,0.2364811603342267) (0.2,0.23122602343791054) (0.21,0.2575017079194913) (0.22,0.22859845498975245) (0.23,0.23385359188606863) (0.24,0.2364811603342267) (0.25,0.23385359188606863) (0.26,0.2180881811971202) (0.27,0.21151926007672497) (0.28,0.21020547585264593) (0.29,0.21677439697304116) (0.3,0.21940196542119922) (0.31,0.22071574964527824) (0.32,0.22728467076567346) (0.33,0.2548741394713332) (0.34,0.25618792369541227) (0.35,0.26275684481580747) (0.36,0.22728467076567346) (0.37,0.22728467076567346) (0.38,0.23516737611014768) (0.39,0.24961900257501707) (0.4,0.2075779074044879) (0.41,0.18787114404330232) (0.42,0.1904987124914604) (0.43,0.21414682852488306) (0.44,0.20626412318040885) (0.45,0.1944400651636975) (0.46,0.18787114404330232) (0.47,0.22465710231751537) (0.48,0.18524357559514423) (0.49,0.19312628093961848) (0.5,0.23516737611014768) (0.51,0.18787114404330232) (0.52,0.17079194913027484) (0.53,0.2075779074044879) (0.54,0.15502653844132638) (0.55,0.16422302800987965) (0.56,0.18655735981922328) (0.57,0.1629092437858006) (0.58,0.176047086026591) (0.59,0.19706763361185559) (0.6,0.19575384938777654) (0.61,0.20889169162856694) (0.62,0.21677439697304116) (0.63,0.19706763361185559) (0.64,0.16685059645803774) (0.65,0.2430500814546219) (0.66,0.22465710231751537) (0.67,0.2299122392138315) (0.68,0.2299122392138315) (0.69,0.26932576593620267) (0.7,0.29166009774554624) (0.71,0.3376425455883126) (0.72,0.32319091912344317) (0.73,0.42041095170529186) (0.74,0.42960744127384515) (0.75,0.4164695990330548) (0.76,0.5071207104945085) (0.77,0.4453728519627937) (0.78,0.47033475222029536) (0.79,0.5228861211834569) (0.8,0.4952966524777971) (0.81,0.5044931420463503) (0.82,0.4834725944610857) (0.83,0.5071207104945085) (0.84,0.5018655735981923) (0.85,0.5333963949760891) (0.86,0.5452204529928004) (0.87,0.5609858636817489) (0.88,0.6109096641967523) (0.89,0.5951442535078039) (0.9,0.5833201954910925) (0.91,0.5898891166114877) (0.92,0.633243996006096) (0.93,0.6266750748857007) (0.94,0.5806926270429345) (0.95,0.633243996006096) (0.96,0.6752850911766252) (0.97,0.6476956224709653) (0.98,0.7186399705712334) (0.99,0.705502128330443) (1.0,0.7475432235009722) (1.01,0.7199537547953124) (1.02,0.6818540122970204) (1.03,0.6634610331599138) (1.04,0.6989332072100478) (1.05,0.6792264438488622) (1.06,0.6936780703137316) (1.07,0.7330915970361028) (1.08,0.6371853486783331) (1.09,0.701560775658206) (1.1,0.6857953649692575) (1.11,0.6174785853171475) (1.12,0.6450680540228073) (1.13,0.6595196804876767) (1.14,0.6227337222134637) (1.15,0.6135372326449104) (1.16,0.6043407430763571) (1.17,0.6095958799726733) (1.18,0.6148510168689895) (1.19,0.6056545273004362) (1.2,0.5833201954910925) (1.21,0.6240475064375427) (1.22,0.563613432129907) (1.23,0.5425928845446424) (1.24,0.5465342372168794) (1.25,0.5123758473908245) (1.26,0.564927216353986) (1.27,0.5399653160964843) (1.28,0.528141258079773) (1.29,0.5609858636817489) (1.3,0.4782174575647695) (1.31,0.48215881023700663) (1.32,0.4887277313574019) (1.33,0.4545693415313469) (1.34,0.4900415155814809) (1.35,0.4808450260129276) (1.36,0.4545693415313469) (1.37,0.4611382626517421) (1.38,0.45194177308318884) (1.39,0.4361763623942404) (1.4,0.46770718377213727) (1.41,0.43223500972200324) (1.42,0.4755898891166115) (1.43,0.4637658310999002) (1.44,0.5189447685112197) (1.45,0.49398286825371795) (1.46,0.42960744127384515) (1.47,0.49661043670187605) (1.48,0.45062798885910976) (1.49,0.4861001629092438) (1.5,0.4874139471333228) (1.51,0.4979242209259552) (1.52,0.47953124178884865) (1.53,0.5202585527352988) (1.54,0.49398286825371795) (1.55,0.45325555730726785) (1.56,0.4782174575647695) (1.57,0.4913552998055599) (1.58,0.459824478427663) (1.59,0.4545693415313469) (1.6,0.42829365704976613) (1.61,0.42303852015344995) (1.62,0.4361763623942404) (1.63,0.40070418834410637) (1.64,0.4177833832571339) (1.65,0.44668663618687265) (1.66,0.3980766198959483) (1.67,0.39019391455147406) (1.68,0.4151558148089758) (1.69,0.4125282463608177) (1.7,0.3809974249829208) (1.71,0.3809974249829208) (1.72,0.3665457985180514) (1.73,0.388880130327395) (1.74,0.3468390351568658) (1.75,0.36129066162173523) (1.76,0.3691733669662095) (1.77,0.3258184875716012) (1.78,0.3468390351568658) (1.79,0.3560355247254191) (1.8,0.3573493089494981) (1.81,0.3809974249829208) (1.82,0.36785958274213043) (1.83,0.353407956277261) (1.84,0.40858689368858064) (1.85,0.36260444584581425) (1.86,0.37311471963844656) (1.87,0.37836985653476274) (1.88,0.3599768773976562) (1.89,0.350780387829103) (1.9,0.3599768773976562) (1.91,0.3599768773976562) (1.92,0.37442850386252563) (1.93,0.3442114667087077) (1.94,0.3639182300698933) (1.95,0.3258184875716012) (1.96,0.30085658731409953) (1.97,0.28114982395291394) (1.98,0.31399442955488993) (1.99,0.2430500814546219) (2.0,0.2075779074044879) (2.01,0.2023227705081717) (2.02,0.20626412318040885) (2.03,0.16422302800987965) (2.04,0.16028167533764254) (2.05,0.1734195175784329) (2.06,0.15765410688948447) (2.07,0.15502653844132638) (2.08,0.15239896999316832) (2.09,0.1445162646486941) (2.1,0.141888696200536) (2.11,0.1090440905985601) (2.12,0.1300646381838247) (2.13,0.11298544327079721) (2.14,0.13926112775237795) (2.15,7.882705344474224e-2) (2.16,0.13663355930421986) (2.17,0.10773030637448106) (2.18,0.10773030637448106) (2.19,0.10773030637448106) (2.2,8.670975878921645e-2) (2.21,8.670975878921645e-2) (2.22,8.14546218929003e-2) (2.23,7.0944348100268e-2) (2.24,8.539597456513742e-2) (2.25,6.963056387618898e-2) (2.26,4.729623206684534e-2) (2.27,6.043407430763571e-2) (2.28,7.619948499658416e-2) (2.29,6.306164275579379e-2) (2.3,4.729623206684534e-2) (2.31,7.357191654842608e-2) (2.32,6.306164275579379e-2) (2.33,4.8610016290924374e-2) (2.34,6.1747858531714744e-2) (2.35,4.4668663618687265e-2) (2.36,4.4668663618687265e-2) (2.37,4.729623206684534e-2) (2.38,5.7806505859477636e-2) (2.39,4.5982447842766304e-2) (2.4,5.386515318724053e-2) (2.41,4.204109517052919e-2) (2.42,4.729623206684534e-2) (2.43,3.941352672237112e-2) (2.44,3.941352672237112e-2) (2.45,3.941352672237112e-2) (2.46,4.729623206684534e-2) (2.47,3.1530821377896895e-2) (2.48,3.678595827421304e-2) (2.49,2.627568448158074e-2) (2.5,4.204109517052919e-2) (2.51,1.839297913710652e-2) (2.52,2.627568448158074e-2) (2.53,3.941352672237112e-2) (2.54,3.1530821377896895e-2) (2.55,2.364811603342267e-2) (2.56,3.941352672237112e-2) (2.57,3.4158389826054965e-2) (2.58,2.8903252929738818e-2) (2.59,2.364811603342267e-2) (2.6,1.5765410688948447e-2) (2.61,1.839297913710652e-2) (2.62,2.627568448158074e-2) (2.63,2.1020547585264594e-2) (2.64,2.1020547585264594e-2) (2.65,2.627568448158074e-2) (2.66,1.5765410688948447e-2) (2.67,2.1020547585264594e-2) (2.68,1.313784224079037e-2) (2.69,1.5765410688948447e-2) (2.7,1.839297913710652e-2) (2.71,1.839297913710652e-2) (2.72,1.839297913710652e-2) (2.73,2.1020547585264594e-2) (2.74,1.5765410688948447e-2) (2.75,1.839297913710652e-2) (2.76,7.882705344474224e-3) (2.77,1.313784224079037e-2) (2.78,1.313784224079037e-2) (2.79,1.5765410688948447e-2) (2.8,2.6275684481580743e-3) (2.81,1.5765410688948447e-2) (2.82,1.5765410688948447e-2) (2.83,7.882705344474224e-3) (2.84,7.882705344474224e-3) (2.85,7.882705344474224e-3) (2.86,1.0510273792632297e-2) (2.87,7.882705344474224e-3) (2.88,1.313784224079037e-2) (2.89,7.882705344474224e-3) (2.9,5.255136896316149e-3) (2.91,1.0510273792632297e-2) (2.92,5.255136896316149e-3) (2.93,1.0510273792632297e-2) (2.94,2.6275684481580743e-3) (2.95,1.0510273792632297e-2) (2.96,7.882705344474224e-3) (2.97,5.255136896316149e-3) (2.98,2.6275684481580743e-3) (2.99,2.6275684481580743e-3) (3.0,2.6275684481580743e-3) (3.01,5.255136896316149e-3) (3.02,7.882705344474224e-3) (3.03,7.882705344474224e-3) (3.04,2.6275684481580743e-3) (3.05,2.6275684481580743e-3) (3.06,2.6275684481580743e-3) (3.07,5.255136896316149e-3) (3.08,2.6275684481580743e-3) (3.09,2.6275684481580743e-3) (3.1,2.6275684481580743e-3) (3.11,2.6275684481580743e-3) (3.12,0.0) (3.13,0.0) (3.14,2.6275684481580743e-3) (3.15,5.255136896316149e-3) (3.16,0.0) (3.17,2.6275684481580743e-3) (3.18,0.0) (3.19,2.6275684481580743e-3) (3.2,0.0) };
\end{axis}
\end{tikzpicture}
\end{center}

The description of the moments $M_{\ell}(\alpha)$
is rather complicated, so we start with some preliminary remarks. Since $\lambda_1 \leqslant 1+\frac{Q^2}{N} \leqslant 1 + \alpha^{-1} + o(1)$,
 all probability measures
$\mu_{Q,N}$ are supported in $[0, C(\alpha)]$ for some constant $C(\alpha) > 0$, so we have for free that
$$
0 \leqslant \mathfrak{M}_{\ell}(Q) = \int_{0}^\infty t^{\ell} d \mu_{Q,N}(t) \leqslant C(\alpha)^{\ell} .
$$
In particular, if each $\mathfrak{M}_{\ell}(Q)$ converges to a limit $C_{\ell}$ as $Q \rightarrow \infty$, then
$\mu_{Q,N} \stackrel{\operatorname{w}^*}{\longrightarrow} \mu$ for some probability measure $\mu$ supported in $[0, 1 + \alpha^{-1}]$
with moments $C_\ell$ .
We also notice that the first moment is trivial:
$$
\mathfrak{M}_{1}(Q) = \frac{1}{N} \operatorname{Tr} \Big ( \frac{A^{\star} A}{N} \Big ) = \frac{|\FF_{Q}|}{N} \sim \frac{3}{\pi^2 \alpha}.
$$
For $\ell \geqslant 2$, our starting point will be the analysis of the exponential sum in \eqref{moments}.
This is a sum of $\asymp N^{2\ell}$ oscillating terms, which by the large sieve are bounded by $\ll N^{\ell + 1}$, so close to square-root cancellation.
Our task is to refine this bound to an asymptotic equality.
We accomplish this in the theorem below.
\begin{theorem} \label{main}
{\rm (i)} For each $\ell \geqslant 2$, there exists a
continuous function $M_{\ell}(\alpha)$ on $(0,\infty)$ and
some explicit exponent $\theta_{\ell} > 0$ such that, given $0<\gamma_1<\gamma_2$, one has
$$
\mathfrak{M}_{\ell}(Q) =  M_{\ell} \Big (\frac{N}{Q^2} \Big )
+ O_{\ell, \gamma_1, \gamma_2}(Q^{-\theta_{\ell}})
$$
whenever $\gamma_1 Q^2 \leqslant N\leqslant \gamma_2 Q^2$ as $Q \rightarrow \infty$.
Precisely, taking $(A,0)=\lvert A\rvert$, ${\mathbf A}=(A_1,\ldots,A_{\ell-1})$,
${\mathbf B}=(B_1,\ldots,B_{\ell-1})$, $\sinc(x) := \frac{\sin x}{x}$ if $x\neq 0$, $\sinc(0) = 1$, and
$$
h_{A,B}(x,y) :=\frac{B}{y(Ay-Bx)} ,
$$
\begin{equation}\label{eq1.3}
\DD_{{\mathbf A},{\mathbf B}} :=\{ (x,y) : 0\leqslant x\leqslant y\leqslant 1, 0< A_i y-B_i x\leqslant 1,
\forall i\in \llbracket 1,\ell-1\rrbracket\} ,
\end{equation}
we have
\begin{equation}
\begin{split} \label{ml}
& M_{\ell}(\alpha) = \frac{6}{\pi^2 \alpha} \hspace{-8pt}
\sum\limits_{\substack{{\mathbf A},{\mathbf B}\in \Z^{\ell-1} \\ (A_i,B_i)=1, \forall i \\ A_i^2+B_i^2 \neq 0 , \forall i}}
\hspace{-4pt} \iint_{\DD_{{\mathbf A},{\mathbf B}}} \hspace{-8pt}
\operatorname{sinc} \big( \pi\alpha h_{A_1,B_1}(x,y) \big) \operatorname{sinc}
\big( \pi\alpha h_{A_{\ell-1},B_{\ell-1}} (x,y)\big) \\
& \hspace{2cm} \times
\prod\limits_{i=1}^{\ell-2} \operatorname{sinc} \big(
\pi\alpha h_{A_i,B_i}(x,y)-\pi\alpha h_{A_{i+1},B_{i+1}} (x,y) \big)  dx dy \in [0, \infty).
\end{split}
\end{equation}

{\rm (ii)}
The expression defining $M_{\ell}(\alpha)$ above is absolutely convergent.

{\rm (iii)}
For each $\ell \geqslant 2$, there exists $\kappa_{\ell} > 0$ such that,
given $0 < \gamma_1 < \gamma_2$, one has
$$|M_{\ell}(\alpha) - M_{\ell}(\beta)| \ll_{\gamma_1, \gamma_2} |\alpha - \beta|^{\kappa_{\ell}}$$
for all $\alpha,\beta \in [\gamma_1,\gamma_2]$.
\end{theorem}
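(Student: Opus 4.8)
\emph{Proof strategy.} The plan is to start from the exponential sum \eqref{moments} and evaluate the inner sum over $n_1,\ldots,n_\ell$ first. Collecting the coefficient of each $n_j$, the phase equals $\sum_{j=1}^{\ell}n_j(\theta_j-\theta_{j-1})$ with the cyclic convention $\theta_0=\theta_\ell$, so the $n$-sum factors as $\prod_{j=1}^{\ell}D_N(\theta_j-\theta_{j-1})$ with $D_N(t)=\sum_{n=1}^{N}e(nt)$. Writing $\frac1N D_N(t)=e\big(\frac{(N+1)t}{2}\big)\frac{\sin\pi Nt}{N\sin\pi t}$, the linear phases telescope around the cycle and cancel in the product, leaving
\[
\mathfrak M_\ell(Q)=\frac1N\sum_{\theta_1,\ldots,\theta_\ell\in\FF_Q}\prod_{j=1}^{\ell}\frac{\sin\pi N(\theta_j-\theta_{j-1})}{N\sin\pi(\theta_j-\theta_{j-1})}.
\]
Since $\frac1N|D_N(t)|\ll\min(1,(N\|t\|)^{-1})$, the product is concentrated on tuples whose entries lie in a common window of length $\ll Q^{-2+\delta}$; the contribution of the complementary tuples is $O(Q^{-c})$ by this pointwise bound together with the iterated estimate $\sum_{\theta\in\FF_Q}\frac1N|D_N(\theta-\theta_0)|\ll_\alpha\log Q$ and the a priori bound $\mathfrak M_\ell(Q)\ll_\alpha 1$ (itself a consequence of the large sieve), so the problem reduces to the clustered tuples.

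\emph{Main-term computation, part (i).} On a clustered tuple I fix $\theta_1=a/q\in\FF_Q$ and record the other fractions relative to it. A reduced fraction near $a/q$ has the form $a'/q'$ with $B:=a'q-aq'\in\Z$, and its denominator satisfies $q'\equiv -B\,\overline a\pmod q$, $0<q'\le Q$, where $\overline a$ is the inverse of $a$ modulo $q$ taken in $\{1,\ldots,q-1\}$; thus it is parametrised by a pair $(A,B)\in\Z^2$ with $(A,B)=1$, $(A,B)\neq(0,0)$, and, putting $x=\overline a/Q$, $y=q/Q$, one has $q'=Aq-B\overline a=Q(Ay-Bx)$ and $\theta'-\theta_1=\frac{B}{qq'}=\frac1{Q^2}h_{A,B}(x,y)$. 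Hence the requirement that all $\ell-1$ companions lie in $\FF_Q$ is exactly $(x,y)\in\DD_{\mathbf A,\mathbf B}$, and $N(\theta_{i+1}-\theta_1)=\widetilde\alpha\,h_{A_i,B_i}(x,y)$ with $\widetilde\alpha=N/Q^2$. Replacing $\frac{\sin\pi Nt}{N\sin\pi t}$ by $\sinc(\pi Nt)$ on the clustered range (the relative error being $O(t^2)=O(Q^{-4+2\delta})$), the cyclic product becomes $\sinc(\pi\widetilde\alpha h_{A_1,B_1})\,\sinc(\pi\widetilde\alpha h_{A_{\ell-1},B_{\ell-1}})\prod_{i=1}^{\ell-2}\sinc\big(\pi\widetilde\alpha(h_{A_i,B_i}-h_{A_{i+1},B_{i+1}})\big)$, matching the integrand in \eqref{ml}. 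It remains to sum over $a/q$: since $\overline a$ runs over the residues coprime to $q$, the pairs $(\overline a,q)$ are precisely the coprime pairs with $0<\overline a<q\le Q$, which by elementary counting (Möbius inversion in $\overline a$, then partial summation over $q$ against $\varphi(q)/q$) equidistribute in $\{0\le x\le y\le 1\}$ with respect to $\frac6{\pi^2}\,dx\,dy$; this yields the factor $\frac1N\cdot\frac6{\pi^2}Q^2=\frac6{\pi^2\widetilde\alpha}$ and the formula \eqref{ml}. The precise power-saving form of this equidistribution, uniform in $(\mathbf A,\mathbf B)$, is where the large sieve and the results on correlations of Farey fractions are invoked; the exponent $\theta_\ell$ is obtained by balancing the window truncation, the Dirichlet-versus-$\sinc$ replacement, the equidistribution error (which degrades as the integrand oscillates near the lines $A_iy=B_ix$, i.e.\ where a companion denominator is small), and the truncation of the $(\mathbf A,\mathbf B)$-sum at height $\ll Q^{\delta}$.

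\emph{Absolute convergence, part (ii), and continuity, part (iii).} For part (ii) the plan is to use $|\sinc u|\ll\min(1,|u|^{-1})$ together with the geometry of $\DD_{\mathbf A,\mathbf B}$: the slab $0<A_iy-B_ix\le 1$ has width $\ll(A_i^2+B_i^2)^{-1/2}$ in $(x,y)$, while on it $h_{A_i,B_i}=B_i/(y(A_iy-B_ix))$ is large unless $A_i$ is pinned to $B_i$ within $O(1)$. The two endpoint factors force $h_{A_1,B_1}$ and $h_{A_{\ell-1},B_{\ell-1}}$ to be $O(1)$, and the difference factors $\sinc(\pi\alpha(h_{A_i,B_i}-h_{A_{i+1},B_{i+1}}))$ propagate this along the chain, so on the support of the integrand all $h_{A_i,B_i}=O_\ell(1)$; combining this pinning with the width and decay bounds makes $\sum_{\mathbf A,\mathbf B}\iint_{\DD_{\mathbf A,\mathbf B}}\prod|\sinc|$ converge, with a quantitative tail bound that also legitimises the truncation used in part (i). Part (iii) follows along the same lines with each $\sinc(\pi\alpha h)$ replaced by the increment, using $|\sinc(\pi\alpha h)-\sinc(\pi\beta h)|\ll\min(|\alpha-\beta||h|,|h|^{-1})\ll|\alpha-\beta|^{\kappa}|h|^{2\kappa-1}$ for $0<\kappa<\frac12$ (and $\frac1\alpha-\frac1\beta\ll_{\gamma_1}|\alpha-\beta|$ for the prefactor): taking $\kappa=\kappa_\ell$ small enough that the series of part (ii) still converges with the extra factor $|h|^{2\kappa-1}$ gives $|M_\ell(\alpha)-M_\ell(\beta)|\ll_{\gamma_1,\gamma_2}|\alpha-\beta|^{\kappa_\ell}$, and continuity of $M_\ell$ on $(0,\infty)$ in part (i) is the case $\beta\to\alpha$. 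The main obstacle throughout is obtaining all of these bounds \emph{uniformly} in the growing parameters $(\mathbf A,\mathbf B)$ — simultaneously controlling the measure of $\DD_{\mathbf A,\mathbf B}$ and the oscillation of the $\sinc$-product near its boundary lines — which is exactly what makes parts (ii) and (iii), and the truncation step of part (i), go through.
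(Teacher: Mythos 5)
Your route to part (i) is genuinely different from the paper's at the outset: you factor the inner sum over $n_1,\dots,n_\ell$ into a cyclic product of Dirichlet kernels and reduce to clustered tuples, whereas the paper first smooths the sharp cutoff in the $n$-variables (using the large sieve to control the smoothing error, Section \ref{smoothing}) and only reaches the coprime-pair parametrization after M\"obius inversion, Poisson summation, and divisor switching (Section \ref{asymptotic}). Your telescoping of the linear phases and the iterated bound $\sum_{\theta'}\frac1N|D_N(\theta'-\theta)|\ll\log Q$ do give a clean reduction to the clustered region, and your direct parametrization of the companions of $\theta_1=a/q$ by coprime pairs $(A_i,B_i)$ in the coordinates $(x,y)=(\overline a/Q,q/Q)$ lands exactly on the paper's final variables; the equidistribution step then corresponds to the paper's application of the lattice-point lemma (Lemma \ref{LL3}). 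This is an attractive shortcut, bypassing the smoothing altogether; be aware, though, that the paper uses the rapid decay of $\widehat f_\delta$ to discard the thin regions where a companion denominator $A_iy-B_ix$ is small, and with $\sinc$ (decay only $1/|u|$) that step requires a separate, more careful lattice-point estimate which your sketch does not supply.

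The genuine gap is part (ii), and it propagates into everything else. You assert that the endpoint factors ``force $h_{A_1,B_1}$ and $h_{A_{\ell-1},B_{\ell-1}}$ to be $O(1)$'' and that the difference factors ``propagate this along the chain.'' This is false: $\sinc$ is not compactly supported, so nothing is pinned; the most the decay $|\sinc u|\le\min(1,|u|^{-1})$ yields is the bound $\frac{y^{2}}{|B_1B_{\ell-1}|}\prod_i|A_iy-B_ix|\,|A_iB_{i+1}-A_{i+1}B_i|^{-1}$ (the paper's Lemma \ref{L3.1}), on whose support all the $h_{A_i,B_i}$ may be simultaneously large. Likewise the width bound $\ll(A_i^2+B_i^2)^{-1/2}$ for a single slab is useless on its own, since the slabs for different $i$ can be nearly parallel and $\sum_{\mathbf A,\mathbf B}\prod_i(A_i^2+B_i^2)^{-1/2}$ diverges. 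Establishing convergence of the resulting sum over $(\mathbf A,\mathbf B)\in\Z^{2\ell-2}$ is the technical core of the paper: it needs the counting lemma for simultaneous solutions of $A_iB_{i+1}-A_{i+1}B_i=D_i$ in dyadic boxes (Lemma \ref{counting}) together with the multi-case analysis of Section \ref{absolute}, and moreover the weighted version with the extra factor $\max_i\{|A_i|^\delta,|B_i|^\delta\}$ (Proposition \ref{Peq3.4}) is precisely what legitimises every truncation you invoke: the extension from clustered tuples to all $(\mathbf A,\mathbf B)$, the tail bound for the Dirichlet-kernel-to-$\sinc$ replacement, and the $|h|^{2\kappa-1}$ interpolation in part (iii). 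As written, parts (ii) and (iii) and the error terms of part (i) all rest on this unproved — and, in the form you justify it, incorrect — convergence claim.
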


We highlight that both continuity and absolute convergence of the expression defining $M_{\ell}(\alpha)$ are non-trivial.
Parts (ii) and (iii) in Theorem \ref{main} will be proved in Section 4, while part (i) will be proved in Section 5.

With Theorem \ref{main} at hand the deduction of our main result Corollary \ref{Cmain} is immediate.
\begin{proof}[Deduction of Corollary \ref{Cmain}]
The large sieve inequality yields $\operatorname{supp} \mu_{Q,N} \subseteq [0,1+\frac{Q^2}{N}]$ for all $Q$.
Hence there exists a positive constant $K$ such that $\operatorname{supp} \mu_{Q,N} \subseteq [0,K]$
for all $Q$. Banach-Alaoglu's theorem shows that the sequence of probability measures $(\mu_{Q,N})$ on the compact set
$[0,K]$ has at least one cluster point $\mu_\alpha$ in the weak$^*$ topology on $C([0,K])^*$, and
$\mu_\alpha$ is a probability measure on $[0,K]$. Theorem \ref{main} shows that any two such cluster
points $\mu,\mu_\alpha$ have the same moments $M_\ell(\alpha)$, $\ell\in \N$, thus $\mu=\mu_\alpha$ by Stone-Weierstrass.

It remains to show that the limiting distribution $\mu_{\alpha}$ is not degenerate,
that is its variance $M_\ell(2)-M_\ell(1)^2 =M_\ell(2)-(\frac{3}{\pi^2 \alpha})^2$ is non-zero.
For $\alpha \geqslant 1$ this is impossible, since by Proposition \ref{T2.1} from Section 2,
the second moment is $> \frac{3}{\pi^2 \alpha} \geqslant (\frac{3}{\pi^2 \alpha})^2$.

According to Ramar\'e's formula or the proof of Proposition \ref{T2.1} below,
for $\alpha < 1$ one has
$$\int_{0}^\infty t^2 d \mu_{\alpha}(t) = \frac{3}{\pi^2 \alpha} + \frac{6}{\pi^2 \alpha} \cdot \frac{1}{\pi i}
\int_{-1/8 +i\infty}^{-1/8 +i\infty} \frac{ \alpha^{s - 1} \zeta(s)}{s (s + 1) (2 - s)^2 \zeta(2 - s)}  ds.
$$
Shifting the contour of integration to $\Re s = 1$, we collect a pole at $s = 0$ that contributes $(\frac{3}{\pi^2 \alpha})^2$.
We conclude that
\begin{equation} \label{nondegenerate}
\bigg | \int_{0}^\infty \hspace{-3pt} t^2 d \mu_{\alpha}(t) - \Big ( \frac{3}{\pi^2 \alpha} \Big )^2 \bigg | >
\frac{3}{\pi^2 \alpha} - \bigg | \frac{6}{\pi^2 \alpha} \cdot \frac{1}{\pi i} \int_{1-i\infty}^{1+i\infty} \hspace{-3pt}
\frac{\alpha^{s - 1} \zeta(s)}{s (s + 1) (2 - s)^2 \zeta(2 - s)}  ds \bigg | .
\end{equation}
Notice that the rightmost term is \textbf{strictly} less than
$$
\frac{6}{\pi^2 \alpha} \cdot \frac{1}{\pi} \int_{\mathbb{R}} \frac{dt}{|1 + it|^4} = \frac{3}{\pi^2 \alpha}.
$$
It follows that the left-hand side of \eqref{nondegenerate} is $> 0$ and hence the distribution $\mu_{\alpha}(t)$ is not degenerate.
\end{proof}
One wonders if $\mu_{\alpha}$ is absolutely continuous with respect to the Lebesgue measure, except for possible atoms at $0$
(which arise naturally when $N > (1 + \varepsilon) |\FF_{Q}|$, since $A$ is not of full rank as soon as $N > |\FF_{Q}|$).

The result in \cite{BZ1} shows (after a small modification) that when $N=\lvert \FF_Q\rvert$, there exists a positive measure
$g_\ell$, supported on $[\frac{3}{\pi^2},\infty)$ when $\ell=2$ and on a countable union of surfaces in $\R^{\ell-1}$
when $\ell >2$ (thus in particular having Lebesgue measure zero support when $\ell >3$), such that,
$$
S_\ell (Q;f) :=\frac{1}{N} \sum\limits_{\substack{\theta_1,\ldots,\theta_\ell\in \FF_Q \\ \operatorname{distinct}}} F_Q (\theta_1-\theta_2,\theta_2-\theta_3,
\ldots, \theta_{\ell-1}-\theta_\ell) = 2\int_{[0,\infty)^{\ell-1}} f\, dg_\ell +o(1),
$$
as $Q\rightarrow\infty$, for smooth functions $f$ compactly supported in $(0,\infty)^{\ell-1}$, where $F_Q$ denotes the $\Z^{\ell-1}$-periodization
of $f$ given by
$$
F_Q ({\mathbf x}): =\sum\limits_{{\mathbf m}\in \Z^{\ell-1}} f\big( N({\mathbf m}+{\mathbf x})\big),\quad
{\mathbf x} \in \R^{\ell-1}/\Z^{\ell-1} .
$$
Concretely, the measure $g_\ell$ is supported on the union of all surfaces $\Phi_{{\mathbf A},{\mathbf B}}({\mathcal D}_{{\mathbf A},{\mathbf B}})$ with
${\mathbf A},{\mathbf B}\in \N^{\ell-1}$, $(A_i,B_i)=1, \forall i$,
$\Phi_{{\mathbf A},{\mathbf B}}=T\circ T_{{\mathbf A},{\mathbf B}}$, $T_{{\mathbf A},{\mathbf B}}=
\frac{3}{\pi^2} (h_{A_1,B_1},\ldots ,h_{A_{\ell-1},B_{\ell-1}})$ and $T(x_1,\ldots,x_{\ell-1})=(x_1-x_2,x_2-x_3,\ldots,x_{\ell-2}-x_{\ell-1},x_{\ell-1})$.
It is important here that the support of $f$ is compact, as it implies that the number of $(2\ell-2)$-tuples $({\mathbf A},{\mathbf B})$ that produce
non-zero terms in $\int_{[0,\infty)^{\ell-1}} f\, dg_\ell$ is finite.
However, when the support of $f$ contains $0$ or when $f$ is not compactly supported, the question of convergence of such an expression becomes delicate, in particular since we do not have non-trivial point-wise bounds for $g_{\ell}$ as soon as $\ell > 2$. 

There is a close relationship between \eqref{ml} and the density $g_{\ell}$. Using the absolute convergence of \eqref{ml} and an explicit formula for $g_{\ell}$,
provided by instance by formula (1.4) in \cite{BZ1}, it is possible to re-write $M_\ell (\alpha)$ in \eqref{ml} as
\begin{equation} \label{alternative}
\frac{6}{\pi^2 \alpha} \int_{[0,\infty)^{\ell-1}} \hspace{-6pt} \sinc
\bigg( \frac{\pi^3 \alpha (x_1 + \ldots + x_{\ell - 1})}{3}\bigg) \prod_{i = 1}^{\ell - 1} \sinc \bigg( \frac{\pi^3 \alpha x_i}{3} \bigg)
 d\widetilde{g}_\ell (x_1, \ldots, x_{\ell - 1}),
\end{equation}
where the measure $\widetilde{g}_\ell$ is defined in a similar way as $g_\ell$, but summing over
the larger range $({\mathbf A},{\mathbf B})\in \Z^{2\ell-2}$ with $(A_i,B_i)=1$ and $A_i^2+B_i^2\neq 0$ for all $i$
(in particular the support of $\widetilde{g}_\ell$ still has zero Lebesgue measure in $\R^{\ell-1}$).
From this we see that
the proof of absolute convergence of the expression defining $M_{\ell}(\alpha)$
amounts to establishing bounds for the decay rate of $\widetilde{g}_{\ell}$ in an averaged sense.

We close by mentioning that
two of the remaining challenges are to determine finer properties of the distribution function of the limiting probability measure $\mu_\alpha$
and to obtain information about the limiting eigenvectors of the large sieve matrix $A^{\star} A$.
We hope to come back to these questions in a later paper. 

\subsection{Outline of the argument and plan of the paper}

We now highlight the main steps in our proof.
We first address in Section \ref{secondmoment} the case $\ell = 2$, adapting techniques from \cite{BZ1}. This recovers Ramar\'e's initial result. It is not clear how to proceed when $\ell \geqslant 3$ without introducing a smoothing on the $n_1,\ldots, n_{\ell}$ variables.
Since our sum is highly oscillating, it is also not immediately clear that a smoothing can be efficiently introduced. Ramar\'e remarks in his paper \cite{Ra1} that this is a significant stumbling block. In Section \ref{smoothing}
we show that one can introduce a substantial smoothing by using the large sieve inequality. After having smoothed, we would like to relate the question of
computing the moments to the $n$-correlation function of Farey fractions which was computed in \cite{BZ1}. Here an initial obstacle is that the variables $\theta_i$ are chained in a circular manner, requiring us to control simultaneously $N(\theta_1 - \theta_2), N(\theta_2 - \theta_3), \ldots, N(\theta_{\ell} - \theta_1)$. We resolve this problem by using a Fourier analytic trick, which reduces us to the case where we need to understand $N(\theta_1 - \theta_2), \ldots, N(\theta_{\ell - 1} - \theta_{\ell})$, that is, without the circular chaining. We then adapt in Section \ref{asymptotic}
the argument from a paper by Zaharescu and the first author \cite{BZ1} where the higher correlation measures of Farey fractions are computed.
One of the key arguments from \cite{BZ1} relies on the divisor switching technique. 
It is interesting to notice that this is also the crucial ingredient in the recent work on the ``asymptotic large sieve'' by Conrey-Iwaniec-Soundararajan \cite{ALS}.
Finally,
in order to conclude the computation carried out in Section \ref{asymptotic},
we need to establish the absolute convergence of the expression defining $M_{\ell}(\alpha)$. This requires
a rather substantial elementary argument that splits into several cases. We
 perform this analysis in Section \ref{absolute}. The main ingredient is a counting lemma for simultaneous solutions to a system of equations of the form $A_{i} B_{i + 1} - A_{i + 1} B_{i} = \Delta_i$.
In a subsequent paper we hope to apply this argument to analyze the behavior at infinity of higher correlation functions of Farey fractions, which was only worked out for the pair correlation.


\section{Moments of second order} \label{secondmoment}

We first consider in detail the case $\ell=2$. An asymptotic formula for $\MMM_2 (Q)$ was previously established in \cite{Ra1}.
Here we follow a different approach in the spirit of \cite{BZ1}.

Denote by $H_N$ the characteristic function ${\mathbf 1}_{[1/N,1]}$ of the interval $[\frac{1}{N},1]$. We can write
\begin{equation}\label{eq2.1}
\begin{split}
\MMM_2 (Q) & = \frac{1}{N^3} \sum\limits_{\theta_1,\theta_2\in \FF_Q} \sum\limits_{n_1,n_2\in\Z}
e\big( (n_1-n_2)(\theta_1-\theta_2)\big) H_N \Big( \frac{n_1}{N}\Big) H_N \Big( \frac{n_2}{N}\Big) \\
& =  \frac{1}{N^3}\sum\limits_{\theta_1,\theta_2\in\FF_Q} \sum\limits_{n\in\Z} e\big( n(\theta_1-\theta_2)\big)
\sum\limits_{n_2 \in\Z} H_N \Big( \frac{n_2}{N}\Big) H_N \Big( \frac{n_2+n}{N}\Big) .
\end{split}
\end{equation}
The inner sum in \eqref{eq2.1} is seen to coincide with $(N-\lvert n\rvert){\mathbf 1}_{[-N,N]}(n)$, and so
\begin{equation}\label{eq2.2}
\MMM_2 (Q) =\frac{1}{N^2} \sum_{n=-N}^N \phi \Big( \frac{n}{N}\Big) \sum\limits_{\theta_1,\theta_2\in\FF_Q} e\big( n(\theta_1-\theta_2)\big) ,
\end{equation}
where
\begin{equation*}
\phi (x):= (1-\lvert x\rvert) {\mathbf 1}_{[-1,1]} (x) =({\mathbf 1}_{[0,1]} \ast {\mathbf 1}_{[-1,0]} )(x).
\end{equation*}
Using also 
\begin{equation*}
\widehat{{\mathbf 1}_{[0,1]}} (x)=e^{-\pi ix} \sinc (\pi x) ,
\end{equation*}
we find
\begin{equation}\label{2.3}
\psi (x):= \widehat{\phi}(x)= \widehat{{\mathbf 1}_{[0,1]}}(x) \widehat{{\mathbf 1}_{[-1,0]}} (x)= \sinc^2 (\pi x) =\psi (-x) .
\end{equation}
From \eqref{eq2.2} and \eqref{2.3} we infer
\begin{equation}\label{eq2.4}
\MMM_2 (Q)= \frac{1}{N} \sum_{n\in\Z} \frac{1}{N} \widehat{\psi}\Big( \frac{n}{N}\Big) \sum\limits_{\theta_1,\theta_2\in\FF_Q} e\big( n(\theta_1-\theta_2)\big) ,
\end{equation}
thus it suffices to reprove an analogue of \cite[Theorem 2]{BZ1} with the compactly supported smooth function $H$ there being replaced by $\psi$ here.

\subsection{An asymptotic formula for $\MMM_2 (Q)$} We follow closely Sections 2 and 4 in \cite{BZ1} with
\begin{equation*}
c_n=\frac{1}{N} \widehat{\psi}\Big( \frac{n}{N}\Big).
\end{equation*}
Consider the M\" obius function $\mu$ and the summation function
\begin{equation*}
M(X):= \sum\limits_{n\leqslant X} \mu(n).
\end{equation*}
An application of M\" obius inversion (see, e.g., formula (1) in Section 12.2 of \cite{Ed}) shows that for every function $f:\Q \cap [0,1]\rightarrow \C$,
\begin{equation*}
\sum\limits_{\theta\in \FF_Q} f(\theta) =\sum\limits_{k\geqslant 1} M\Big( \frac{Q}{k}\Big) \sum\limits_{j=1}^k f\Big( \frac{j}{k}\Big) .
\end{equation*}
In particular this provides the following well-known identity:
\begin{equation}\label{eq2.5}
\sum\limits_{\theta\in \FF_Q} e(n\theta) = \sum\limits_{d\vert n} dM \Big( \frac{Q}{d}\Big),\qquad n\in \Z , Q\in\N .
\end{equation}
For $n=0$ this corresponds to $\lvert \FF_Q\rvert =\sum_{d\geqslant 1} M(\frac{Q}{d})$.

Poisson's summation formula \cite[Theorem 8.36]{Fo}  holds true when applied to a pair $(\psi_h, \widehat{\psi}_h)$,
where $\psi_h(x):=\psi (hx)$, $h>0$,
because $\lvert \psi(x)\rvert \leqslant \frac{1}{(1+\lvert x\rvert)^2}$ and $\widehat{\psi}$ has compact support.
Proceeding exactly as in \cite{BZ1}, we arrive at the following closed form analogue of formulas (4.4) and (4.5)
in \cite{BZ1}:
\begin{equation}\label{eq2.6}
\begin{split}
\MMM_2 (Q) & = \frac{1}{N} \sum\limits_{r_1,r_2 \in \llbracket 1,Q\rrbracket} \mu(r_1)\mu(r_2)
\sum\limits_{\substack{d_1\in \llbracket 1,\frac{Q}{r_1} \rrbracket \\ d_2 \in \llbracket 1,\frac{Q}{r_2}\rrbracket}} (d_1,d_2)
\sum\limits_{n\in \Z} \psi \Big( \frac{nN}{[d_1,d_2]}\Big)  \\
& = \frac{1}{N} \sum\limits_{r_1,r_2\in \llbracket 1,Q\rrbracket}
\mu(r_1)\mu(r_2)
\sum\limits_{\delta \in \llbracket 1,\min\{ \frac{Q}{r_1},\frac{Q}{r_2}\}\rrbracket} \delta
 \sum\limits_{n\in \Z}
\sum\limits_{\substack{q_1 \in \llbracket 1,\frac{Q}{r_1\delta} \rrbracket \\
q_2 \in \llbracket 1,\frac{Q}{r_2\delta} \rrbracket \\ (q_1,q_2)=1}}
\psi  \Big( \frac{nN}{q_1 q_2 \delta} \Big).
\end{split}
\end{equation}
This sum is split as $\MMM_2^I (Q)+\MMM_2^{II} (Q)+\MMM_2^{III} (Q)$, with terms arising from the contribution of
$n=0$, $\psi_\Lambda (x):=\psi (x) {\mathbf 1}_{\{ 0<\lvert x\rvert \leqslant \Lambda\}}$, and respectively
$\psi (x) {\mathbf 1}_{\{ \lvert x\rvert >\Lambda\}}$, where we take $\Lambda :=N^{1/2} \asymp Q $.

The contribution of $n=0$ is given by
\begin{equation}\label{eq2.7}
\begin{split}
\MMM_2^I (Q) & = \frac{1}{N} \sum\limits_{r_1,r_2 \in \llbracket 1,Q\rrbracket} \mu(r_1)\mu(r_2)
\sum\limits_{\substack{d_1\in \llbracket 1,\frac{Q}{r_1}\rrbracket \\ d_2\in \llbracket 1,\frac{Q}{r_2}\rrbracket}} (d_1,d_2) \\ &
 = \frac{1}{N} \sum\limits_{d_1,d_2\in \llbracket 1,Q\rrbracket} (d_1,d_2) M\Big( \frac{Q}{d_1}\Big) M\Big( \frac{Q}{d_2}\Big)
 = \frac{|\FF_{Q}|}{N} ,
\end{split}
\end{equation}
the last equality being noticed at the top of page 420 in \cite{Ra1}.

The contribution of $n$ with $\frac{\lvert n\rvert N}{q_1 q_2\delta} >\Lambda$ to the inner sum in \eqref{eq2.6} is
\begin{equation*}
\ll \frac{q_1^2 q_2^2\delta^2}{N^2}  \sum\limits_{n> \frac{\Lambda q_1 q_2\delta}{N}} \frac{1}{n^2} \ll
\frac{q_1^2 q_2^2 \delta^2}{N^2} \cdot \frac{N}{\Lambda q_1 q_2 \delta} =\frac{q_1 q_2 \delta}{\Lambda N},
\end{equation*}
hence
\begin{equation}\label{eq2.8}
\MMM_2^{III} (Q) \ll \frac{1}{\Lambda N^2} \sum_{r_1,r_2 \in \llbracket 1,Q\rrbracket} \sum\limits_{\delta \in \llbracket 1,\frac{Q}{\max\{ r_1,r_2\}}\rrbracket}
\hspace{-10pt} \delta^2 \Big( \frac{Q}{r_1\delta}\Big)^2 \Big( \frac{Q}{r_2 \delta}\Big)^2 \ll \frac{Q^4}{\Lambda N^2} \ll Q^{-1 } .
\end{equation}

Finally we estimate $\MMM_2^{II}(Q)$. In this situation we have
$0< \frac{\lvert n\rvert N}{q_1 q_2\delta} \leqslant \Lambda$, leading to $N\leqslant
\Lambda q_1 q_2\delta \leqslant \Lambda Q\min\{ q_1,q_2\}$ and thus
$\min\{ q_1,q_2\} \geqslant \frac{N}{\Lambda Q}$.
We also have
\begin{equation*}
\lvert n\rvert r_1 r_2\delta \leqslant r_1 r_2\delta \cdot \frac{q_1 q_2 \delta \Lambda}{N}\leqslant
\frac{\Lambda Q^2}{N} \ll \Lambda.
\end{equation*}
To estimate
\begin{equation*}
S_{r_1,r_2,\delta,n} (Q) := \sum\limits_{\substack{\min\{ q_1,q_2\} > \frac{N}{\Lambda Q} \\ q_1\in \llbracket 1,\frac{Q}{r_1\delta}\rrbracket ,
q_2 \in \llbracket 1, \frac{Q}{r_2 \delta}\rrbracket \\ (q_1,q_2)=1}} \hspace{-8pt} \psi \Big( \frac{nN}{q_1 q_2\delta} \Big),
\end{equation*}
we take $f(x,y):=\psi ( \frac{nN}{\delta xy})$,
$\Omega :=\{ (x,y) : x\leqslant \frac{Q}{r_1 \delta}, y\leqslant \frac{Q}{r_2 \delta},
\min\{ x,y\} \geqslant \frac{N}{\Lambda Q}\}$, and apply the following:
\begin{lemma}[Lemma 2 and Corollary 1 in \cite{BCZ}]\label{LL3}
Suppose that $\Omega \subseteq [1,R]^2$ is a region with rectifiable boundary and $f\in C^1(\Omega)$
with $Df=\big\lvert \frac{\partial f}{\partial x}\big\rvert +\big\lvert \frac{\partial f}{\partial y}\big\rvert$
and $\| \  \|_\infty$ denoting the sup norm on $\Omega$.
Then
\begin{equation*}
\sum\limits_{\substack{(m,n)\in\Omega \\ (m,n)=1}} \hspace{-6pt} f(m,n) =
\frac{6}{\pi^2} \iint_\Omega f (x,y) dx dy + \EE_{f,\Omega,R} ,
\end{equation*}
with
\begin{equation*}
\EE_{f,\Omega,R} \ll \| Df\|_\infty  \operatorname{Area} (\Omega) \log R
+\| f\|_\infty \big( R +\operatorname{length}(\partial \Omega) \log R \big)  .
\end{equation*}

Furthermore, if $\Omega$ is also convex, then
\begin{equation*}
\EE_{f,\Omega,R} \ll \| Df\|_\infty  \operatorname{Area} (\Omega) \log R
+\| f\|_\infty R\log R  .
\end{equation*}
\end{lemma}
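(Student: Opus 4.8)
The plan is to remove the coprimality constraint with the Möbius function and then apply a two-dimensional Euler--Maclaurin comparison to the dilated regions $\tfrac1d\Omega$. Writing $\mathbf 1_{(m,n)=1}=\sum_{d\mid(m,n)}\mu(d)$ and interchanging the finite sums,
$$
\sum_{\substack{(m,n)\in\Omega\\(m,n)=1}}f(m,n)=\sum_{d\geqslant 1}\mu(d)\sum_{(a,b)\in\frac1d\Omega\cap\Z^2}f(da,db)=\sum_{1\leqslant d\leqslant R}\mu(d)\,T_d,\qquad T_d:=\sum_{(a,b)\in\frac1d\Omega\cap\Z^2}f(da,db),
$$
the range being truncated at $d\leqslant R$ because $\Omega\subseteq[1,R]^2$ forces $d\leqslant da\leqslant R$ whenever $a\geqslant 1$.

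The core is a single-region estimate: if $\Omega'$ has rectifiable boundary and $h\in C^1(\Omega')$, then
$$
\sum_{(a,b)\in\Omega'\cap\Z^2}h(a,b)=\iint_{\Omega'}h(x,y)\,dx\,dy+O\big(\|Dh\|_\infty\operatorname{Area}(\Omega')+\|h\|_\infty(1+\operatorname{length}(\partial\Omega'))\big).
$$
I would prove this by attaching to each lattice point $p\in\Omega'$ the unit cell $Q_p$ centred at $p$, so that $\sum_p h(p)=\int_{U}\bar h$ with $U=\bigcup_p Q_p$ and $\bar h=\sum_p h(p)\mathbf 1_{Q_p}$. A cell with $Q_p\subseteq\Omega'$ contributes $\int_{Q_p}|h(p)-h|\leqslant\|Dh\|_\infty$, and there are at most $\operatorname{Area}(\Omega')$ such cells; every other cell with $p\in\Omega'$ meets $\partial\Omega'$, and a rectifiable curve of length $L$ meets $\ll 1+L$ cells, each of which contributes $\ll\|h\|_\infty$ via the trivial bound $|\bar h-h|\leqslant 2\|h\|_\infty$ on $Q_p\cap\Omega'$; finally $\Omega'\bigtriangleup U$ is covered by those boundary cells, so replacing $\int_U$ by $\int_{\Omega'}$ costs $\ll\|h\|_\infty(1+\operatorname{length}(\partial\Omega'))$. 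Note that $h$ is never evaluated outside $\Omega'$. When $\Omega'$ is additionally convex the same argument bounds the boundary cells by the gradient as well, giving the sharper form, but this is not needed below.

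Applying this with $\Omega'=\tfrac1d\Omega$ and $h(x,y)=f(dx,dy)$, and recording $\iint_{\frac1d\Omega}f(dx,dy)\,dx\,dy=d^{-2}\iint_\Omega f$, $\|Dh\|_\infty=d\,\|Df\|_\infty$, $\|h\|_\infty=\|f\|_\infty$, $\operatorname{Area}(\tfrac1d\Omega)=d^{-2}\operatorname{Area}(\Omega)$ and $\operatorname{length}(\partial\tfrac1d\Omega)=d^{-1}\operatorname{length}(\partial\Omega)$, one obtains
$$
T_d=\frac{1}{d^2}\iint_\Omega f+O\Big(\frac{\|Df\|_\infty\operatorname{Area}(\Omega)}{d}+\|f\|_\infty+\frac{\|f\|_\infty\operatorname{length}(\partial\Omega)}{d}\Big).
$$
Summing over $1\leqslant d\leqslant R$, using $\sum_{d\leqslant R}\mu(d)d^{-2}=\tfrac6{\pi^2}+O(1/R)$ together with $\operatorname{Area}(\Omega)\leqslant R^2$, the main term collapses to $\tfrac6{\pi^2}\iint_\Omega f$ and the accumulated error is $\ll\|Df\|_\infty\operatorname{Area}(\Omega)\log R+\|f\|_\infty(R+\operatorname{length}(\partial\Omega)\log R)$, which is the first assertion. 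For the convex refinement, $\tfrac1d\Omega$ is again convex and a convex subset of a box has perimeter at most that of the box, so $\operatorname{length}(\partial\Omega)\ll R$ and the term $\|f\|_\infty\operatorname{length}(\partial\Omega)\log R$ is absorbed into $\|f\|_\infty R\log R$.

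The step I expect to require the most care is the single-region comparison, specifically keeping $\|Df\|_\infty$ attached \emph{only} to $\operatorname{Area}(\Omega')$ and never to $\operatorname{length}(\partial\Omega')$ or to a bare constant: a perimeter (or constant) factor multiplying the gradient norm would, after summation over $d\leqslant R$, produce an unacceptable error of size $\|Df\|_\infty\operatorname{length}(\partial\Omega)\cdot R$. Everything else is routine two-dimensional lattice-point bookkeeping together with the elementary estimate for $\sum_{d\leqslant R}\mu(d)d^{-2}$.
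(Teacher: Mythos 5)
Your proof is correct. Note that the paper itself does not prove this lemma --- it is imported verbatim from \cite{BCZ} (Lemma 2 and Corollary 1 there) --- so there is no internal argument to compare against; your M\"obius-inversion-plus-unit-cell comparison is the standard route and is, in substance, how the cited reference proceeds. You correctly isolate the one place where care is needed, namely that the gradient norm must multiply only $\operatorname{Area}(\Omega')$ (interior cells) while boundary cells are charged to $\|h\|_\infty$, and the bookkeeping after summing over $d\leqslant R$ with $\sum_{d\leqslant R}\mu(d)d^{-2}=\tfrac{6}{\pi^2}+O(1/R)$ and $\operatorname{Area}(\Omega)\leqslant R^2$ reproduces both stated error terms; the convex refinement via monotonicity of perimeter under inclusion for convex sets is also fine. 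The only implicit hypothesis worth flagging is that the bound ``a rectifiable curve of length $L$ meets $\ll 1+L$ unit cells'' presumes the boundary has a bounded number of components (otherwise the $+1$ accumulates); this is the standard reading of ``region with rectifiable boundary'' and matches how the lemma is used in the paper, where $\Omega$ is always cut out by $O_\ell(1)$ line segments and parabola arcs.
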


It is plain that $\lvert \psi (x)\rvert \leqslant \frac{1}{\pi^2 x^2}$ and
$\lvert \psi^\prime (x)\rvert \leqslant \frac{4}{\pi x^2}$, thus on $\Omega$ we have
$\lvert f(x,y)\rvert \leqslant \frac{\delta^2 x^2 y^2}{n^2 N^2} \leqslant \frac{Q^4}{N^2}\cdot
\frac{1}{r_1^2 r_2^2 \delta^2 n^2}$ and
$\lvert (Df)(x,y)\rvert \leqslant \frac{2 \delta^2 x^2 y^2}{n^2 N^2} \cdot \frac{\lvert n\rvert N}{\delta}
(\frac{1}{x^2 y}+\frac{1}{xy^2}) \ll \frac{\delta (x+y)}{\lvert n\rvert N} \ll \frac{1}{\lvert n\rvert Q}$.
This yields
\begin{equation}\label{eq2.9}
S_{r_1,r_2,\delta,n} (Q) = \frac{6}{\pi^2}
\iint\limits_{\substack{x\leqslant \frac{Q}{r_1\delta} ,\,  y\leqslant \frac{Q}{r_2\delta} \\
\min\{ x,y\} \geqslant \frac{N}{\Lambda Q}}} \psi \Big( \frac{nN}{\delta xy} \Big) dxdy +E_{r_1,r_2,\delta,n} (Q) ,
\end{equation}
with error terms
\begin{equation*}
E_{r_1,r_2,\delta,n} (Q) \ll \frac{1}{\lvert n\rvert Q}\cdot \frac{Q^2}{r_1^2 r_2^2 \delta^2}\log Q +\frac{Q\log Q}{r_1^2r_2^2\delta^2n^2},
\end{equation*}
summing up in $\MMM_2^{II}(Q)$ to
\begin{equation}\label{eq2.10}
\begin{split}
\EE (Q) & = \frac{1}{N}
\sum\limits_{\substack{\lvert n\rvert,r_1,r_2,\delta \geqslant 1 \\ \lvert n\rvert r_1 r_2\delta \ll \Lambda}} \hspace{-6pt} \delta
E_{r_1,r_2,\delta,n}(Q) \ll \frac{\log Q}{Q}
\sum\limits_{\substack{n,r_1,r_2,\delta \geqslant 1 \\ n r_1 r_2\delta \ll \Lambda}}
\frac{1}{r_1^2 r_2^2 \delta n} \ll \frac{(\log Q)^3}{Q} .
\end{split}
\end{equation}
With the change of variables $(x,y)=(Qu,Qv)$ the main term in \eqref{eq2.9} becomes
\begin{equation}\label{eq2.11}
\frac{6Q^2}{\pi^2} \iint_{[\frac{N}{\Lambda Q^2}, \frac{1}{r_1\delta}]\times
[\frac{N}{\Lambda Q^2} , \frac{1}{r_2\delta}]}
\psi \Big( \frac{nN}{Q^2 \delta uv} \Big) du dv .
\end{equation}
When $0<\min\{ u,v\} < \frac{N}{\Lambda Q^2}$ and $\max\{ \delta u,\delta v\} \leqslant 1$ we have $\frac{\lvert n\rvert N}{Q^2 uv\delta} \geqslant
\frac{N}{Q^2}\cdot \frac{1}{uv\delta} \geqslant \frac{N}{Q^2}\cdot \frac{1}{\min\{ u,v\}} > \Lambda$, so
$\psi_\Lambda (\frac{nN}{Q^2 \delta uv})=0$. Thus the expression in \eqref{eq2.11} amounts to
\begin{equation}\label{eq2.12}
\frac{6Q^2}{\pi^2} \iint_{[0,\frac{1}{r_1 \delta}] \times [0,\frac{1}{r_2\delta}]}
\psi \Big( \frac{nN}{Q^2 \delta uv} \Big) du dv + O\Big( Q^2 \cdot \frac{1}{\Lambda^2} \cdot \frac{1}{r_1 r_2 \delta^2}\Big),
\end{equation}
with the total contribution of the error term to $\MMM^{II}_2 (Q)$ being
\begin{equation*}
\ll \frac{1}{N} \sum\limits_{\substack{n,r_1,r_2,\delta \geqslant 1 \\ nr_1r_2\delta \ll \Lambda}} \frac{1}{r_1r_2\delta^2}
\ll \frac{Q\log^2 Q}{N} \ll \EE(Q) .
\end{equation*}

Using \eqref{eq2.11}, \eqref{eq2.12}, $\psi(x)=\psi(-x)$ and the change of variable
$(u,v)\mapsto (u,\lambda)$ with $\lambda=\frac{N}{Q^2}\cdot \frac{\lvert n\rvert}{uv\delta}$, the main term in \eqref{eq2.9} becomes,
up to an additive error of order $O( \frac{1}{r_1 r_2 \delta^2})$,
\begin{equation*}
\frac{6N}{\pi^2}\cdot \frac{\lvert n\rvert}{\delta} \int_0^{\frac{1}{r_1\delta}} du
\int_{\frac{N}{Q^2}\cdot \frac{r_2 \lvert n\rvert}{u}}^\Lambda \psi(\lambda) \, \frac{d\lambda}{u\lambda^2} =
\frac{6N\lvert n\rvert}{\pi^2 \delta}\cdot \int_{\frac{N}{Q^2}\cdot \lvert n\rvert r_1 r_2 \delta}^\Lambda
\int_{\frac{N}{Q^2}\cdot \frac{r_2\lvert n\rvert}{\lambda}}^{\frac{1}{r_1\delta}} \frac{\psi(\lambda)}{\lambda^2 u} \, du d\lambda .
\end{equation*}
We infer that
\begin{equation*}
\MMM_2^{II} (Q)=\frac{12}{\pi^2} \hspace{-4pt} \sum\limits_{\substack{n,r_1,r_2,\delta \geqslant 1 \\ nr_1 r_2 \delta \leqslant \frac{\Lambda Q^2}{N}}}
\hspace{-4pt} \mu (r_1)\mu(r_2) n
\int_{\frac{N}{Q^2}\cdot  n r_1 r_2 \delta}^\Lambda \hspace{-2pt}
\frac{\psi(\lambda)}{\lambda^2} \log \Big( \frac{Q^2}{N}\cdot \frac{\lambda}{n r_1 r_2 \delta} \Big) d\lambda
+ \EE (Q).
\end{equation*}
Taking $K=nr_1 r_2\delta \in [1,\frac{\Lambda Q^2}{N}]$ and using
\begin{equation*}
\sum\limits_{\substack{n,r_1,r_2 \geqslant 1 \\ nr_1 r_2 \vert K}} \mu(r_1)\mu(r_2)n =\varphi (K)
\end{equation*}
and $\lvert \psi (x)\rvert \leqslant \frac{1}{x^2}$, we infer
\begin{equation}\label{eq2.13}
\begin{split}
\MMM_2^{II} (Q) & =\frac{12}{\pi^2} \sum\limits_{K\in \llbracket 1,\frac{\Lambda Q^2}{N} \rrbracket} \varphi (K)
\int_{\frac{N}{Q^2} K}^\Lambda \frac{\psi (\lambda)}{\lambda^2}
\log \Big( \frac{Q^2}{N}\cdot \frac{\lambda}{K}\Big) d\lambda +\EE (Q) \\
& = \frac{12}{\pi^2} \int_0^\Lambda \frac{\psi (\lambda)}{\lambda^2} \sum\limits_{K\in \llbracket 1,\frac{\lambda Q^2}{N} \rrbracket}
\varphi (K) \max \bigg\{ 0,\log \Big( \frac{Q^2}{N}\cdot \frac{\lambda}{K}\Big)\bigg\} d\lambda +\EE (Q) \\
& =\frac{18}{\pi^4}\cdot \frac{Q^4}{N^2}  \int_0^\Lambda \psi(\lambda ) g_2 \Big( \frac{3}{\pi^2}\cdot \frac{Q^2}{N} \lambda\Big) d\lambda +\EE (Q) \\
& = \frac{18}{\pi^4}\cdot \frac{Q^4}{N^2} \int_0^\infty \psi(\lambda) g_2 \Big( \frac{3}{\pi^2}\cdot \frac{Q^2}{N} \lambda\Big) d\lambda
+O (\Lambda^{-1}) + \EE (Q) \\
& = \frac{18}{\pi^4}\cdot \frac{Q^2}{N} \int_0^\infty \psi\Big( \frac{N}{Q^2} x\Big) g_2\Big( \frac{3}{\pi^2}x\Big)  dx
+O\Big( \frac{(\log Q)^3}{Q}\Big),
\end{split}
\end{equation}
with the function $g_2$ defined as in \cite{BZ1} by
\begin{equation}\label{eq2.14}
g_2 \Big( \frac{3}{\pi^2} u\Big) =\frac{2\pi^2 }{3 u^2} \sum\limits_{K\in [1,u)} \varphi (K)
\log \Big( \frac{u}{K} \Big),
\end{equation}
being continuous, supported on $[\frac{3}{\pi^2},\infty)$, with $\| g_2^\prime \|_\infty <\infty$
and $g_2(x)=1+O(\frac{1}{x})$ as $x\rightarrow \infty$.

Using the dominated convergence theorem we conclude that,

\begin{proposition}\label{T2.1}If $N\sim \alpha Q^2$ for some $\alpha >0$ as $Q \rightarrow \infty$, then
$$\lim\limits_Q \MMM_2 (Q) = M_2 (\alpha):=\frac{3}{\pi^2 \alpha} +\Big ( \frac{3}{\pi^2} \Big )^2 \cdot \frac{2}{\alpha}
\int_0^\infty \sinc^2 ( \pi \alpha u )  g_2 \Big (\frac{3}{\pi^2} u \Big ) du.$$
\end{proposition}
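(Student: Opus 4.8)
The plan is to combine the decomposition $\MMM_2(Q)=\MMM_2^I(Q)+\MMM_2^{II}(Q)+\MMM_2^{III}(Q)$ established above with the estimates already obtained and to pass to the limit term by term, the only analytic input being the dominated convergence theorem. For the $n=0$ piece, \eqref{eq2.7} gives $\MMM_2^I(Q)=|\FF_Q|/N=\frac{|\FF_Q|}{Q^2}\cdot\frac{Q^2}{N}$, so $|\FF_Q|\sim\frac{3}{\pi^2}Q^2$ together with $N\sim\alpha Q^2$ yields $\MMM_2^I(Q)\to\frac{3}{\pi^2\alpha}$. By \eqref{eq2.8} and \eqref{eq2.10} the tail term $\MMM_2^{III}(Q)$ and the accumulated error $\EE(Q)$ are $\ll(\log Q)^3/Q$ (using $N\asymp Q^2$, which is implied by $N\sim\alpha Q^2$), hence they vanish in the limit.

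It remains to treat $\MMM_2^{II}(Q)$. By the last line of \eqref{eq2.13},
$$\MMM_2^{II}(Q)=\frac{18}{\pi^4}\cdot\frac{Q^2}{N}\int_0^\infty\psi\Big(\frac{N}{Q^2}x\Big)g_2\Big(\frac{3}{\pi^2}x\Big)\,dx+O\Big(\frac{(\log Q)^3}{Q}\Big).$$
Since $N/Q^2\to\alpha\in(0,\infty)$, for all $Q$ sufficiently large one has $\tfrac12\alpha\le N/Q^2\le 2\alpha$; combining $|\psi(y)|\le\min\{1,\pi^{-2}y^{-2}\}$ (which follows from $\psi=\sinc^2(\pi\,\cdot\,)$) with the boundedness of $g_2$ (continuous and satisfying $g_2(x)=1+O(1/x)$), the integrand is bounded, uniformly in such $Q$, by a fixed multiple of $\min\{1,x^{-2}\}$, which is integrable on $[0,\infty)$. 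Since $\psi\big(\tfrac{N}{Q^2}x\big)\to\psi(\alpha x)=\sinc^2(\pi\alpha x)$ pointwise and $\tfrac{Q^2}{N}\to\alpha^{-1}$, the dominated convergence theorem gives
$$\lim_Q\MMM_2^{II}(Q)=\frac{18}{\pi^4\alpha}\int_0^\infty\sinc^2(\pi\alpha x)\,g_2\Big(\frac{3}{\pi^2}x\Big)\,dx=\Big(\frac{3}{\pi^2}\Big)^2\frac{2}{\alpha}\int_0^\infty\sinc^2(\pi\alpha u)\,g_2\Big(\frac{3}{\pi^2}u\Big)\,du,$$
where we used $18/\pi^4=2(3/\pi^2)^2$.

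Adding the three limits yields $\lim_Q\MMM_2(Q)=M_2(\alpha)$, as claimed. The only genuine obstacle is producing the uniform integrable majorant required for the dominated convergence step, and this rests entirely on the quadratic decay of $\psi$ away from the origin and on the boundedness of $g_2$; everything else is bookkeeping.
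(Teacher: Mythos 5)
Your proposal is correct and follows essentially the same route as the paper: the paper's proof of Proposition \ref{T2.1} consists precisely of the decomposition and estimates \eqref{eq2.7}--\eqref{eq2.13} followed by an (unelaborated) appeal to dominated convergence, and you have correctly supplied the uniform majorant $\min\{1, C_\alpha x^{-2}\}$ coming from the quadratic decay of $\psi=\sinc^2(\pi\,\cdot\,)$ and the boundedness of $g_2$. The bookkeeping of the constants ($18/\pi^4 = 2(3/\pi^2)^2$) and of the vanishing error terms is also accurate.
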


\begin{remark}
Since $\lvert \psi (x)\rvert \leqslant \frac{1}{x^2}$, $\psi \in C^1 ([0,\infty))$ and $\| g_2 \|_\infty <1$, it is easily seen,
by truncating the integral in Proposition \ref{T2.1} at $Q^{\beta/2}$, that if
\begin{equation*}
N=\alpha Q^2 \big( 1+O(Q^{-\beta})\big)
\end{equation*}
for some $\beta >0$, then
\begin{equation*}
\MMM_2 (Q) =M_2 (\alpha) +O(Q^{-\beta/2}) .
\end{equation*}
\end{remark}

Using a different description of $g_{2}(x)$, originally noticed by R. R. Hall and presented in \cite{BZ1},
we also see that this main term matches the expression given in Theorem \ref{main}
(we however do not need this, since we reprove Proposition \ref{T2.1} in Section \ref{asymptotic},
when dealing with the general case of all $\ell\geqslant 2$).

\subsection{Comparison with Ramar\'e's main term} Ramar\'e's estimate of $\MMM_Q (2)$ produced the following main term (see the formula
between (46) and (47) and formula (47) in \cite{Ra1}):
\begin{equation}\label{eq2.15}
\MMM_2 (Q) \sim \frac{|\FF_Q|}{N} + \frac{Q^4}{N^2}\cdot {\mathfrak h}\Big( \frac{N}{Q^2}\Big) ,
\end{equation}
where
\begin{equation}\label{eq2.16}
{\mathfrak h}(x)=\frac{6}{\pi^3 i} \int_{-1/8 -i\infty}^{-1/8 +i\infty} \frac{x^s}{s(s+1)(2-s)^2} \cdot \frac{\zeta(s)}{\zeta(2-s)}\, ds .
\end{equation}

Employing formula (4.15) in \cite{BZ1} we can write
\begin{equation}\label{eq2.17}
\begin{split}
g_2 \Big( \frac{3}{\pi^2} u\Big) & = \frac{2\pi^2}{3u^2} \cdot \frac{1}{2\pi i}
\int_{17/8-i\infty}^{17/8+i\infty} \frac{\zeta(s-1)}{\zeta(s)} \cdot \frac{u^s}{s^2}\, ds \\
& =\frac{2\pi^2}{3u^2} \cdot \frac{1}{2\pi i} \int_{9/8-i\infty}^{9/8+i\infty}
\frac{\zeta(s)}{\zeta(1+s)} \cdot \frac{u^{1+s}}{(1+s)^2}\, ds .
\end{split}
\end{equation}
Employing also Fubini we infer
\begin{equation}\label{eq2.18}
\begin{split}
I_\alpha & := \frac{3}{\pi^2}\int_0^\infty \sinc^2 (\alpha u)
g_2 \Big( \frac{3}{\pi^2} u\Big) du \\
& =\frac{1}{\pi i} \int_0^\infty   \sinc^2 (\alpha u)   \int_{9/8-i\infty}^{9/8+i\infty}
\frac{1}{u^2} \cdot \frac{u^{1+s}}{(1+s)^2} \cdot \frac{\zeta(s)}{\zeta(1+s)}\, ds  du \\
& =\frac{1}{\pi i\alpha^2} \int_{9/8-i\infty}^{9/8+i\infty}
\frac{\zeta(s)}{(1+s)^2 \zeta (1+s)} \int_0^\infty \frac{\sin^2 (\alpha u)}{u^{3-s}}\,  du  ds.
\end{split}
\end{equation}
Employing the identity (cf. formula 3.823 page 454 in \cite{GR})
\begin{equation}\label{eq2.19}
\int_0^\infty \frac{\sin^2 x}{x^\nu} \, dx = -2^{\nu-2} \Gamma (1-\nu) \cos \Big( \frac{(1-\nu)\pi}{2}\Big)
\qquad \mbox{\rm if $1< \operatorname{Re} \nu <3$,}
\end{equation}
we find
\begin{equation*}
\int_0^\infty \frac{\sin^2 (\alpha u)}{u^{3-s}}\, du =\alpha^{2-s} 2^{1-s} \Gamma (s-2) \cos \Big( \frac{\pi s}{2}\Big)
=\frac{2\alpha^2}{(2\alpha)^s} \cdot \frac{\Gamma (s)\cos \big( \frac{\pi s}{2}\big)}{(s-2)(s-1)} ,
\end{equation*}
which we insert into \eqref{eq2.18} to derive
\begin{equation*}
I_\alpha =\frac{2}{\pi i} \int_{9/8-i\infty}^{9/8+i\infty}
\frac{(2\alpha)^{-s} \Gamma (s) \cos \big( \frac{\pi s}{2}\big)}{(2-s)(1-s)(1+s)^2} \cdot \frac{\zeta(s)}{\zeta(1+s)}\, ds .
\end{equation*}
The functional equation
\begin{equation*}
\zeta (s)=\frac{\pi \zeta (1-s)}{(2\pi)^{1-s} \sin\big( \frac{\pi(1-s)}{2}\big) \Gamma (s)}
\end{equation*}
and the change of variable $s\mapsto 1-s$ provide
\begin{equation*}
\begin{split}
I_\alpha & =\frac{1}{\pi i} \int_{9/8-i\infty}^{9/8+i\infty} \Big( \frac{\pi}{\alpha}\Big)^s
\frac{1}{(2-s)(1-s)(1+s)^2} \cdot \frac{\zeta(1-s)}{\zeta(1+s)}\, ds \\
& =\frac{1}{\pi i} \int_{-1/8-i\infty}^{-1/8+i\infty} \Big( \frac{\pi}{\alpha}\Big)^{1-s}
\frac{1}{s(1+s)(2-s)^2} \cdot \frac{\zeta(s)}{\zeta(2-s)}\, ds .
\end{split}
\end{equation*}

Finally, inserting this into \eqref{eq2.13} we infer
\begin{equation}\label{eq2.20}
\begin{split}
\MMM_2^{II} (Q) & \sim  \frac{Q^2}{N} \cdot \frac{6}{\pi^2} I_{\pi N/Q^2} \\
& = \frac{Q^2}{N} \cdot \frac{6}{\pi^2} \cdot \frac{1}{\pi i}
\int_{-1/8-i\infty}^{-1/8+i\infty} \hspace{-8pt}
\frac{(Q^2/N)^{1-s}}{s(s+1)(2-s)^2} \cdot \frac{\zeta(s)}{\zeta(2-s)}\, ds
\\
& =\frac{Q^4}{N^2} \cdot \frac{6}{\pi^3 i} \int_{-1/8-i\infty}^{-1/8+i\infty}
\frac{( N/Q^2)^s}{s(s+1)(2-s)^2} \cdot \frac{\zeta(s)}{\zeta(2-s)}\, ds  \\
& =\frac{Q^4}{N^2}\cdot {\mathfrak h}\Big( \frac{N}{Q^2}\Big) .
\end{split}
\end{equation}

From \eqref{eq2.7} and \eqref{eq2.20} our main term $M_2 (\alpha)$ in the asymptotic formula
for $\MMM_2 (Q)$ given in Proposition \ref{T2.1} coincides with the one in \cite[Theorem 1.1]{Ra1}.

\section{Smoothing of $\MMM_\ell (Q)$} \label{smoothing}
As seen in the previous section,
when dealing with $\MMM_2 (Q)$ it is possible to proceed directly without smoothing
the characteristic function $H_N$.
However, smoothing becomes necessary for $\ell \geqslant 3$, due to the accumulations of terms.
In this section we show that this can be efficiently achieved employing the large sieve inequality.

Let $\delta \in (0,1)$. We pick a function $f_\delta \in C^\infty_c (\R)$ such that
$0\leqslant f_\delta \leqslant 1$, $f_\delta = 1$ on the interval $[\delta , 1]$,
and $\operatorname{supp} f_\delta =[0,1+\delta]$.
Consider $\Theta=(\theta_1,\ldots ,\theta_\ell)\in \FF_Q^{\ell}$, the function
\begin{equation}\label{eq3.1}
\begin{split}
h_{\Theta} (x_1,\ldots,x_\ell) & :=
e\big( x_1(\theta_1-\theta_\ell)+x_2(\theta_2-\theta_1)+\cdots +x_\ell (\theta_\ell-\theta_{\ell-1})\big) \\
& = e\big( (x_1-x_2)\theta_1 +(x_2-x_3)\theta_2 +\cdots + (x_\ell-x_1)\theta_\ell\big) ,
\end{split}
\end{equation}
and its smoothed form
\begin{equation}\label{eq3.2}
h_{\delta; \Theta} (x_1,\ldots,x_\ell) :=
h_{\Theta} (x_1,\ldots,x_\ell)
f_\delta \Big( \frac{x_1}{N}\Big) \cdots f_\delta \Big( \frac{x_\ell}{N}\Big) .
\end{equation}

In this section we will show that the large sieve inequality allows us to
replace $\MMM_\ell (Q)$ by its smoothed version
\begin{equation}\label{eq3.3}
\begin{split}
\MMM_{\ell;\delta} (Q) & := \frac{1}{N^{\ell+1}} \sum\limits_{\substack{\theta_1,\ldots,\theta_\ell \in \FF_Q \\ n_1,\ldots,n_\ell \in\Z}}
h_{\delta ;\Theta} (n_1,\ldots,n_\ell) \\
& = \frac{1}{N^{\ell+1}} \sum\limits_{\substack{\theta_1,\ldots,\theta_\ell \in \FF_Q \\
0< n_1,\ldots,n_\ell <(1+\delta)N}} h_{\delta ;\Theta} (n_1,\ldots,n_\ell) .
\end{split}
\end{equation}
On the other hand we have
\begin{equation*}
\MMM_\ell (Q)=\frac{1}{N^{\ell+1}} \sum\limits_{\substack{\theta_1,\ldots,\theta_\ell \in \FF_Q \\
0< n_1,\ldots,n_\ell <(1+\delta)N}} h_{\Theta} (n_1,\ldots,n_\ell) {\mathbf 1}_{(0,1]} \Big(\frac{n_1}{N}\Big)
\cdots {\mathbf 1}_{(0,1]} \Big( \frac{n_\ell}{N}\Big) ,
\end{equation*}
where ${\mathbf 1}_{\mathcal S}$ denotes the characteristic function of a set ${\mathcal S}$.

For disjoint subsets $\MM,\AA,\BB$ of $\llbracket 1,\ell\rrbracket$ consider
\begin{equation*}
\begin{split}
\II^{(1)}_{\MM,\AA,\BB} & :=\frac{1}{N^{\ell+1}} \sum\limits_{\theta_1,\ldots ,\theta_\ell\in\FF_Q}
\sum\limits_{\substack{n_1,\ldots,n_\ell \\ \{ j: 0<n_j < \delta N\}=\AA \\ \{  k: N < n_k < (1+\delta) N\}=\BB \\
\{ i: \delta N \leqslant n_i \leqslant N\} =\MM}} h_{\delta ;\Theta} (n_1,\ldots,n_\ell), \\
\II^{(2)}_{\MM,\AA,\BB} & :=\frac{1}{N^{\ell+1}} \sum\limits_{\theta_1,\ldots ,\theta_\ell\in\FF_Q}
\sum\limits_{\substack{n_1,\ldots ,n_\ell \\ \{ j: 0<n_j < \delta N\}=\AA \\ \{  k: N < n_k < (1+\delta) N\}=\BB \\
\{ i: \delta N \leqslant n_i \leqslant N\} =\MM}} h_{\Theta} (n_1,\ldots,n_\ell)
{\mathbf 1}_{(0,1]} \Big( \frac{n_1}{N}\Big) \cdots {\mathbf 1}_{(0,1]} \Big( \frac{n_\ell}{N}\Big) .
\end{split}
\end{equation*}
If ${\mathcal B}\neq \emptyset$, then ${\mathcal I}^{(2)}_{\MM,\AA,\BB}=0$. We have
\begin{equation*}
\MMM_\ell (Q) =\sum\limits_{\AA \sqcup \BB \sqcup \MM =\llbracket 1,\ell\rrbracket} \II^{(2)}_{\MM,\AA,\BB},\qquad
\MMM_{\ell;\delta} (Q) =\sum\limits_{\AA \sqcup \BB \sqcup \MM =\llbracket 1,\ell\rrbracket} \II^{(1)}_{\MM,\AA,\BB}.
\end{equation*}
Employing $\II^{(1)}_{\llbracket 1,\ell \rrbracket,\emptyset,\emptyset} =
\II^{(2)}_{\llbracket 1,\ell \rrbracket,\emptyset,\emptyset}$ we can write
\begin{equation}\label{eq3.4}
\begin{split}
\MMM_\ell (Q)-\MMM_{\ell;\delta}(Q) =
\sum\limits_{\substack{\AA\sqcup \BB \sqcup \MM =\llbracket 1,\ell\rrbracket \\ \MM \neq \llbracket 1,\ell\rrbracket}}
\big( \II^{(2)}_{\MM,\AA,\BB} -\II^{(1)}_{\MM,\AA,\BB} \big) .
\end{split}
\end{equation}

We will now bound the contribution of each $\II^{(1)}_{\MM,\AA,\BB}$ with $\MM \neq \llbracket 1,\ell\rrbracket$
using the large sieve inequality.
The contribution of $\II^{(2)}_{\MM,\AA,\BB}$ will be dealt with in identical manner by taking
$f_\delta (\frac{n}{N}) = \mathbf{1}_{(0,1]}( \frac{n}{N} )$ in the argument that is about to follow. For this reason we will only write down the argument
for $\II^{(1)}_{\MM,\AA,\BB}$. Consider
\begin{equation*}
x_{n,\theta}:=e(n\theta) \sqrt{f_\delta \Big( \frac{n}{N}\Big)} ,
\quad 0< n <(1+\delta)N, \theta \in \FF_Q,
\end{equation*}
and the rectangular matrices $A,B \in M_{[\delta N],|\FF_{Q}|} (\C)$, $M\in M_{N-[\delta N],|\FF_{Q}|} (\C)$
with entries $x_{n,\theta}$ where $\theta \in \FF_Q$ and $0 < n < \delta N$ for $A$,
$N < n < (1+\delta) N$ for $B$, and $\delta N \leqslant n \leqslant N$ for $M$, respectively. Clearly
$A^\star A , B^\star B, M^\star M$ are $|\FF_{Q}| \times |\FF_{Q}|$ matrices with
\begin{equation}\label{eq3.5}
\begin{split}
& (A^\star A)_{\theta^\prime,\theta^{\prime\prime}} =\sum\limits_{0 < n < \delta N} e\big( n(\theta^{\prime\prime}-\theta^\prime)\big)
f_\delta \Big( \frac{n}{N}\Big) ,\\
& (B^\star B)_{\theta^\prime,\theta^{\prime\prime}} =\sum\limits_{N < n < (1+\delta) N} e\big( n(\theta^{\prime\prime}-\theta^\prime)\big)
f_\delta \Big( \frac{n}{N}\Big) ,\\
& (M^\star M)_{\theta^\prime,\theta^{\prime\prime}} =\sum\limits_{\delta N \leqslant n \leqslant N } e\big( n(\theta^{\prime\prime}-\theta^\prime)\big)
f_\delta \Big( \frac{n}{N}\Big) .
\end{split}
\end{equation}
Writing
\begin{equation*}
\begin{split}
& M=\operatorname{diag} \bigg( \sqrt{f_\delta \Big( \frac{n}{N}\Big)} \bigg)_{\delta N \leqslant n \leqslant N} \cdot
\Big( e(n\theta)\Big)_{\substack{\delta N \leqslant n \leqslant N \\ \theta\in\FF_Q}} ,\\
& A=\operatorname{diag} \bigg( \sqrt{f_\delta \Big( \frac{n}{N}\Big)} \bigg)_{0< n < \delta N} \cdot
\Big( e(n\theta)\Big)_{\substack{0< n < \delta N \\ \theta\in\FF_Q}} ,\\
& B=\operatorname{diag} \bigg( \sqrt{f_\delta \Big( \frac{n}{N}\Big)} \bigg)_{N < n< (1+\delta) N} \cdot
\Big( e(n\theta)\Big)_{\substack{N < n < (1+\delta) N \\ \theta\in\FF_Q}}
\end{split}
\end{equation*}
and employing $0\leqslant f_\delta \leqslant 1$, the large sieve inequality provides
\begin{equation}\label{eq3.6}
\| M^\star M\| \leqslant N+Q^2 \quad \mbox{\rm and }\quad
\max\big\{ \| A^\star A\|,\| B^\star B\| \big\} \leqslant \delta N+ Q^2.
\end{equation}

Since $\max\{ \operatorname{rank} (A), \operatorname{rank} (B)\} \leqslant \delta N$, we have
$\max\{ \operatorname{rank} (A^* A),\operatorname{rank}(B^*B)\} \leqslant \delta N$.
Since $\operatorname{rank} (X_1\cdots X_\ell ) \leqslant \min\{ \operatorname{rank}(X_1), \ldots,
\operatorname{rank} (X_\ell)\}$, we infer
\begin{equation}\label{eq3.7}
\operatorname{rank} \Bigg( \prod\limits_{r=1}^\ell (M^\star M)^{\alpha_r} (A^\star A)^{\beta_r}
(B^\star B)^{\gamma_r}\Bigg) \leqslant \delta N
\end{equation}
whenever $\alpha_r,\beta_r,\gamma_r \in \{ 0,1\}$ and there exists $r_0 \in \llbracket 1,\ell\rrbracket$ such that
$\beta_{r_0} >0$ or $\gamma_{r_0}>0$.

On the other hand,
setting $S(r):=\AA$, $\BB$ or $\MM$ according to whether $r\in \AA$, $r\in\BB$ or $r\in\MM$
and using \eqref{eq3.5}, we see that
the $(\theta^\prime,\theta^{\prime\prime})$-entry of the product
$\prod_{r=1}^\ell (M^\star M)^{\mathbf{1}_\MM (r)} (A^\star A)^{\mathbf{1}_\AA(r)} (B^\star B)^{\mathbf{1}_\BB (r)}$ of
$\ell$ matrices of the form $M^\star M$, $A^\star A$ or $B^\star B$ is given by
\begin{equation*}
\begin{split}
\sum\limits_{\substack{\theta_1,\ldots,\theta_{\ell-1}\in\FF_Q \\ 0< n_1,\ldots,n_\ell < (1+\delta)N \\
n_r \in S(r),\forall r\in [1,\ell]}} &  e\big( n_1 (\theta_1-\theta^\prime)\big) f_\delta \Big(\frac{n_1}{N}\Big)
e\big( n_2(\theta_2-\theta_1)\big) f_\delta \Big(\frac{n_2}{N}\Big) \\ & \cdots
e\big( n_{\ell-1} (\theta_{\ell-1}-\theta_{\ell-2})\big) f_\delta \Big( \frac{n_{\ell-1}}{N}\Big)
e\big( n_\ell (\theta^{\prime\prime} -\theta_{\ell-1})\big) f_\delta \Big( \frac{n_\ell}{N}\Big) .
\end{split}
\end{equation*}
In conjunction with the definition of $\II^{(1)}_{\MM,\AA,\BB}$, \eqref{eq3.1} and \eqref{eq3.2} and setting
$\theta_0=\theta_\ell=\theta^\prime=\theta^{\prime \prime}$, this
further leads to
\begin{equation}\label{eq3.8}
\begin{split}
\II^{(1)}_{\MM,\AA,\BB} & =
\frac{1}{N^{\ell+1}} \sum\limits_{\theta_1,\ldots ,\theta_\ell\in\FF_Q}
\sum\limits_{\substack{\{ j: 0<n_j < \delta N\}=\AA \\ \{  k: N < n_k < (1+\delta) N\}=\BB \\
\{ i: \delta N \leqslant n_i \leqslant N\} =\MM}} \prod\limits_{r=1}^\ell  e\big( n_r (\theta_r-\theta_{r-1})\big) f_\delta \Big( \frac{n_r}{N} \Big)  \\
& =\frac{1}{N^{\ell+1}} \operatorname{Tr} \Bigg( \prod\limits_{r=1}^\ell (M^\star M)^{\mathbf{1}_{\MM} (r)}
(A^\star A)^{\mathbf{1}_\AA (r)} (B^\star B)^{\mathbf{1}_\BB (r)} \Bigg) .
\end{split}
\end{equation}

Employing \eqref{eq3.8}, the inequality $\operatorname{Tr} (X) \leqslant \operatorname{rank} (X) \| X\|$
for any square matrix $X$, and inequalities \eqref{eq3.6} and \eqref{eq3.7}, we infer
\begin{equation}\label{eq3.9}
\lvert \II^{(1)}_{\MM,\AA,\BB} \rvert \leqslant \frac{1}{N^{\ell+1}} \cdot \delta N ( N+ Q^2 )^\ell
\ll_{\ell} \delta \quad \mbox{\rm whenever $\MM \neq \llbracket 1,\ell\rrbracket$.}
\end{equation}
A similar bound holds on $\II^{(2)}_{\MM,\AA,\BB}$, with $f_\delta (\frac{n}{N}) = \mathbf{1}_{(0,1]}( \frac{n}{N} )$ above,
hence \eqref{eq3.4} and \eqref{eq3.9} yield
\begin{equation}\label{eq3.10}
\MMM_\ell (Q) =\MMM_{\ell;\delta} (Q) +O_{\ell} (\delta) .
\end{equation}

\section{Analysis of the main term $M_{\ell}(\alpha)$} \label{absolute}
Fix $k=\ell-1\geqslant 1$ and a constant $\alpha >0$. For every
$A,B\in \Z$, $A^2+B^2\neq 0$, consider the function $\beta_{A,B,\alpha}$ defined by
\begin{equation}\label{eq4.1}
\beta_{A,B,\alpha}(x,y):=\frac{\alpha B}{y(Ay-Bx)} .
\end{equation}
Let ${\mathfrak F}$ denote the set of functions $F:\R \rightarrow \C$ that satisfy
\begin{equation*}
F(0) = 1 ,\quad \ F(-u) = F(u) , \quad\lvert F (u)\rvert \leqslant \min\Big\{ 1,\frac{1}{\lvert u\rvert}\Big\} ,\quad \forall u\in \R .
\end{equation*}
Denote
\begin{equation}\label{eq4.2}
\psi_F (x_1,\ldots,x_k):= \begin{cases} F(-x_1) \prod_{i=1}^{k-1} F(x_i-x_{i+1}) F(x_k)
& \mbox{\rm if $k\geqslant 2$,} \\
F(-x_1)F(x_1) & \mbox{\rm if $k=1$.}
\end{cases}
\end{equation}
For every ${\mathbf A}=(A_1,\ldots,A_k), {\mathbf B}=(B_1,\ldots ,B_k)\in \Z^k$,
consider the function in two variables
\begin{equation}\label{eq4.3}
\Psi_{F;{\mathbf A},{\mathbf B},\alpha} (x,y) :=\Psi_F \big( \beta_{A_1,B_1,\alpha} (x,y) ,\ldots ,
\beta_{A_k,B_k,\alpha} (x,y) \big),
\end{equation}
and the set $\DD_{{\mathbf A},{\mathbf B}}$ defined in \eqref{eq1.3}. Consider also
\begin{equation*}
I_{k,\delta,\alpha} (F):= \sum\limits_{\substack{{\mathbf A}, {\mathbf B} \in \Z^k \\ (A_i,B_i)=1,\forall i \\
A_i^2+B_i^2 \neq 0,\forall i}}
\max_{i \in [1,k]} \{|A_i|^\delta, |B_i|^\delta \} \hspace{5pt}
\iint_{\DD_{{\mathbf A},{\mathbf B}}} \lvert \Psi_{F;{\mathbf A},{\mathbf B},\alpha} (x,y)\rvert \, dx dy  \in [0,\infty].
\end{equation*}
Recall that we take $(A,0)=\lvert A\rvert$, so if $B_i=0$ for some $i$ in some non-zero term of $I_{k,\delta,\alpha} (F)$,
then $\lvert A_i\rvert \geqslant 1$.

The aim of this section is to prove the following:
\begin{proposition}\label{Peq3.4}
There exists $\delta=\delta_{\ell}>0$ such that
for every $\alpha >0$ we have
\begin{equation*}
\sup\limits_{F\in {\mathfrak F}} I_{k,\delta,\alpha} (F) \ll_k \alpha^{-k-1}+1 <\infty  .
\end{equation*}
\end{proposition}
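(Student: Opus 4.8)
The plan is: (i) bound the inner integrand pointwise; (ii) reduce $I_{k,\delta,\alpha}(F)$ to an $F$‑independent weighted lattice sum of the areas $|\DD_{\mathbf A,\mathbf B}|$; (iii) evaluate that sum by a case analysis built on the cross products $\Delta_i:=A_iB_{i+1}-A_{i+1}B_i$ of consecutive pairs. For step (i), write $u_i:=A_iy-B_ix$ for $1\le i\le k$ and set $u_0:=u_{k+1}:=y$, $\Delta_0:=B_1$, $\Delta_k:=-B_k$, $\beta_{A_0,B_0,\alpha}:=\beta_{A_{k+1},B_{k+1},\alpha}:=0$. On $\DD_{\mathbf A,\mathbf B}$ one has $0<u_i\le1$ for $0\le i\le k+1$, and a direct computation gives $\beta_{A_i,B_i,\alpha}(x,y)=\alpha B_i/(yu_i)$ and $\beta_{A_i,B_i,\alpha}(x,y)-\beta_{A_{i+1},B_{i+1},\alpha}(x,y)=-\alpha\Delta_i/(u_iu_{i+1})$ for $0\le i\le k$. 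Since every $F\in\mathfrak{F}$ satisfies $|F(t)|\le\min\{1,|t|^{-1}\}$, we obtain
\[
|\Psi_{F;\mathbf A,\mathbf B,\alpha}(x,y)|\ \le\ \prod_{i=0}^{k}\min\Big\{1,\frac{u_iu_{i+1}}{\alpha|\Delta_i|}\Big\}\qquad\text{on }\DD_{\mathbf A,\mathbf B},
\]
with the convention that a factor whose $\Delta_i=0$ is read as $1$; this is legitimate since for interior $i$ the constraints $\Delta_i=0$, $(A_i,B_i)=(A_{i+1},B_{i+1})=1$, $u_i,u_{i+1}>0$ force $(A_{i+1},B_{i+1})=(A_i,B_i)$ (the opposite proportionality would give $u_{i+1}<0$), so the relevant $\beta$'s agree, while $B_1=0$ or $B_k=0$ forces $|A_1|=1$ or $|A_k|=1$ and makes the first or last $\beta$ vanish.

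For step (ii), estimating $u_iu_{i+1}\le1$ in every factor gives $\iint_{\DD_{\mathbf A,\mathbf B}}|\Psi_{F;\mathbf A,\mathbf B,\alpha}|\le|\DD_{\mathbf A,\mathbf B}|\prod_{i=0}^k\min\{1,(\alpha|\Delta_i|)^{-1}\}$, so $I_{k,\delta,\alpha}(F)$ is bounded, uniformly in $F\in\mathfrak{F}$, by
\[
\Sigma\ :=\ \sum_{\substack{\mathbf A,\mathbf B\in\Z^k\\(A_i,B_i)=1,\ A_i^2+B_i^2\ne0\ \forall i}}X_{\mathbf A,\mathbf B}^{\delta}\,|\DD_{\mathbf A,\mathbf B}|\prod_{i=0}^k\min\Big\{1,\frac{1}{\alpha|\Delta_i|}\Big\},\qquad X_{\mathbf A,\mathbf B}:=\max_i\max\{|A_i|,|B_i|\}.
\]
The geometry of $\DD_{\mathbf A,\mathbf B}$ feeds in through two facts. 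First, for each $i$ with $\Delta_i\ne0$ the map $(x,y)\mapsto(u_i,u_{i+1})$ has Jacobian $\pm\Delta_i$ and sends $\DD_{\mathbf A,\mathbf B}$ into $(0,1]^2$, whence $|\DD_{\mathbf A,\mathbf B}|\le\min_{i:\Delta_i\ne0}|\Delta_i|^{-1}$. Second, when $X_{\mathbf A,\mathbf B}$ is large the defining strips $0<A_iy-B_ix\le1$ are thin, so $\DD_{\mathbf A,\mathbf B}$ huddles near the boundary of $\{0\le x\le y\le1\}$ and its area decays with $X_{\mathbf A,\mathbf B}$ (like $X_{\mathbf A,\mathbf B}^{-1}$ in the worst directions, with extra decay in transverse ones controlled by the $\Delta_i$); in particular, on the dyadic piece $\DD^{(m)}_{\mathbf A,\mathbf B}$ of $\DD_{\mathbf A,\mathbf B}$ at distance $\asymp2^{-m}$ from that boundary one has $X_{\mathbf A,\mathbf B}\ll2^{m}\prod_{i:\Delta_i\ne0}(1+|\Delta_i|)$ and $|\DD^{(m)}_{\mathbf A,\mathbf B}|\ll2^{-m}$.

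For step (iii), group the terms of $\Sigma$ by the set $S:=\{i:\Delta_i\ne0\}$, then by the values $(\Delta_i)_{i\in S}$, then by the dyadic scale $X_{\mathbf A,\mathbf B}\asymp2^{m}$. In each group $B_1=\Delta_0$ and $B_k=-\Delta_k$ are fixed when $0,k\in S$ (the remaining cases having $|A_1|=1$ or $|A_k|=1$), the pairs indexed by consecutive indices outside $S$ coincide, and along each run of indices in $S$ the equation $A_rB_{r+1}-A_{r+1}B_r=\Delta_r$ confines $(A_{r+1},B_{r+1})$ to a coset of $\Z\cdot(A_r,B_r)$. A counting lemma then bounds, in each such case, the number of admissible $(\mathbf A,\mathbf B)$ with $X_{\mathbf A,\mathbf B}\le X$ by $\ll_kX^{\varepsilon}$ for every $\varepsilon>0$, the final constraint $B_k=-\Delta_k$ removing the last coset freedom; combined with $X\ll2^{m}\prod_{i\in S}(1+|\Delta_i|)$ and $|\DD_{\mathbf A,\mathbf B}|\ll2^{-m}$ (with the improvement to $2^{-2m}$ when a transverse $\Delta_i$ occurs) from step (ii), and choosing $\delta$ (hence $\varepsilon$) small enough to absorb $X_{\mathbf A,\mathbf B}^{\delta}$ and the count, every one‑dimensional sum $\sum_{\Delta_i\in\Z}(1+|\Delta_i|)^{O(\delta)}\min\{1,(\alpha|\Delta_i|)^{-1}\}$, against the available area gain, is $\ll\alpha^{-1+o(1)}+1$. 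Carrying out the $|S|\le k+1$ such sums together with the convergent geometric sum over $m$ gives $\Sigma\ll_k(\alpha^{-1}+1)^{k+1}\ll_k\alpha^{-k-1}+1$, as required.

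I expect the main obstacle to lie in step (iii): the counting lemma for simultaneous solutions of $A_iB_{i+1}-A_{i+1}B_i=\Delta_i$, and the case analysis accompanying it. One has to enumerate the vanishing patterns $S$, handle the degenerate strata where $B_1$ or $B_k$ vanishes or where long runs of consecutive pairs coincide, track how the coset chaining along $S$ interacts both with the size cutoff and with the area decay of $\DD_{\mathbf A,\mathbf B}$, and verify in every case that the bookkeeping closes with the weight $X_{\mathbf A,\mathbf B}^{\delta}$ absorbed and the $\alpha$‑power no worse than $-k-1$. This is precisely the ``rather substantial elementary argument that splits into several cases'' flagged in the introduction; steps (i) and (ii) are comparatively short.
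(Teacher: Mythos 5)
There is a genuine gap, and it is fatal rather than merely technical: the quantity $\Sigma$ that you reduce to in step (ii) is actually \emph{infinite} once $k\geqslant 3$. Take $k=3$, $(A_1,B_1)=(A_3,B_3)=(1,1)$ and $(A_2,B_2)=(M+1,M)$ with $M\geqslant 1$. All coprimality conditions hold, and the cross products are $\Delta_0=B_1=1$, $\Delta_1=-1$, $\Delta_2=1$, $\Delta_3=-B_3=-1$, so your weight $\prod_{i=0}^{k}\min\{1,(\alpha|\Delta_i|)^{-1}\}$ is $\asymp_\alpha 1$; on the other hand $\DD_{\mathbf A,\mathbf B}=\{0\leqslant x<y\leqslant 1:\ M(y-x)+y\leqslant 1\}$ has area $\asymp 1/M$. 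Hence $\Sigma\gg_\alpha\sum_{M\geqslant 1}M^{\delta}\cdot M^{-1}=\infty$ for every $\delta\geqslant 0$. What keeps the true integral finite is precisely what you discard when you ``estimate $u_iu_{i+1}\leqslant 1$ in every factor'': on essentially all of this region one has $u_1=y-x\leqslant 1/M$, so the factor $|F(-\beta_{A_1,B_1,\alpha})|\leqslant y\,u_1/\alpha\ll 1/(\alpha M)$ supplies the missing decay. The factors $A_iy-B_ix$ in the numerator of the pointwise bound therefore cannot be thrown away; they must be kept as functions of $(x,y)$ and played off, at each point, against the number of admissible $(\mathbf A,\mathbf B)$. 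This is exactly the structure of the paper's argument: Lemma \ref{L3.1} retains the $|A_iy-B_ix|$ factors, the elementary identity \eqref{eq4.9} pairs each such factor against the matching factor in the count of Lemma \ref{counting}(i), and the last case of the analysis (where $|B_{j+1}|\gg|B_j|$ --- precisely the configuration of the counterexample above) extracts the forced smallness $A_jy-B_jx\ll 2^{-(\varepsilon/2)\max_i b_i}$.

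A second, independent problem is the counting bound you invoke in step (iii). With $B_1$ and $B_k$ fixed by $\Delta_0,\Delta_k$ and all interior $\Delta_i$ fixed, the number of $(\mathbf A,\mathbf B)$ with all entries $\leqslant X$ is generically $\asymp X$, not $O_k(X^{\varepsilon})$: already for $k=2$, $B_1=B_2=1$, the single equation $A_1-A_2=\Delta_1$ has $\asymp X$ solutions in the box (one free parameter $A_1$ always survives the chaining). The correct statement (Lemma \ref{counting} in the paper) produces counts of size $2^{b_1}/y$ times products of $\min\{(1/y)2^{-a_i}+1,(1/x)2^{-b_i}+1\}$, and these large factors are only controlled because they are multiplied, pointwise in $(x,y)$, by the small factors you dropped. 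Since you yourself flag step (iii) as unverified, and since the reduction of step (ii) already fails, the proposal does not constitute a proof; the architecture needs to be changed to keep the integrand and the lattice-point count coupled throughout, as in the paper.
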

In particular this establishes part (ii) in Theorem \ref{main}.
Before starting the proof of Proposition \ref{Peq3.4}, we note its subsequent important consequence,
which gives part (iii) in Theorem \ref{main}.
\begin{cor} Let $0 < \gamma_1 < \gamma_2$ be given.
With $M_{\ell}(\alpha)$ as in \eqref{ml}, and uniformly in $\alpha,\beta \in [\gamma_1,\gamma_2]$, we have
$$
|M_{\ell}(\alpha) - M_{\ell}(\beta)| \ll_{\gamma_1,\gamma_2} |\alpha - \beta|^{\kappa_\ell}
$$
for some exponent $\kappa_\ell \in (0,1)$.
\end{cor}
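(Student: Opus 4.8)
The plan is to split the series \eqref{ml} over $({\mathbf A},{\mathbf B})$ into a finite ``head'' and an infinite ``tail'', to bound the tail uniformly in $\alpha$ using Proposition~\ref{Peq3.4}, and to show that each head term depends in a Lipschitz way on $\alpha$. First I would record the bookkeeping: write $M_\ell(\alpha)=\tfrac{6}{\pi^2\alpha}\,\Sigma(\alpha)$ with $\Sigma(\alpha)=\sum_{{\mathbf A},{\mathbf B}}J_{{\mathbf A},{\mathbf B}}(\alpha)$ and $J_{{\mathbf A},{\mathbf B}}(\alpha)=\iint_{\DD_{{\mathbf A},{\mathbf B}}}\Phi_{{\mathbf A},{\mathbf B},\alpha}$, where $\Phi_{{\mathbf A},{\mathbf B},\alpha}(x,y)$ is the integrand of \eqref{ml} and the sum runs over ${\mathbf A},{\mathbf B}\in\Z^{\ell-1}$ with $(A_i,B_i)=1$ and $A_i^2+B_i^2\neq0$ for all $i$. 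Since $\beta_{A,B,\pi\alpha}(x,y)=\pi\alpha\,h_{A,B}(x,y)$ and $\sinc\in{\mathfrak F}$, one has $|\Phi_{{\mathbf A},{\mathbf B},\alpha}|=|\Psi_{\sinc;{\mathbf A},{\mathbf B},\pi\alpha}|$, so Proposition~\ref{Peq3.4} (with $\delta=\delta_\ell$, using that the weight $\max_i\{|A_i|^{\delta_\ell},|B_i|^{\delta_\ell}\}\geqslant1$) gives, for all $\alpha\in[\gamma_1,\gamma_2]$ and $T\geqslant1$,
\[
\sum_{{\mathbf A},{\mathbf B}}|J_{{\mathbf A},{\mathbf B}}(\alpha)|\le I_{\ell-1,\delta_\ell,\pi\alpha}(\sinc)\ll_{\gamma_1}1,
\qquad
\sum_{{\mathbf A},{\mathbf B}:\,\max_i\{|A_i|,|B_i|\}>T}|J_{{\mathbf A},{\mathbf B}}(\alpha)|\le T^{-\delta_\ell}I_{\ell-1,\delta_\ell,\pi\alpha}(\sinc)\ll_{\gamma_1}T^{-\delta_\ell},
\]
the last because $\max_i\{|A_i|^{\delta_\ell},|B_i|^{\delta_\ell}\}>T^{\delta_\ell}$ on that range. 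In particular $M_\ell$ is bounded on $[\gamma_1,\gamma_2]$.

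Next I would prove the Lipschitz bound $|J_{{\mathbf A},{\mathbf B}}(\alpha)-J_{{\mathbf A},{\mathbf B}}(\beta)|\le\tfrac{\ell}{\gamma_1}|\alpha-\beta|$ for $\alpha,\beta\in[\gamma_1,\gamma_2]$ and each fixed $({\mathbf A},{\mathbf B})$. For $(x,y)$ off the measure-zero subset of $\DD_{{\mathbf A},{\mathbf B}}$ where some $A_iy-B_ix$ vanishes, the integrand $\Phi_{{\mathbf A},{\mathbf B},\alpha}(x,y)$ is a product of $\ell$ factors of the form $\sinc(\pi\alpha c)$ with $c=c(x,y)$ independent of $\alpha$ (with $c\in\{h_{A_i,B_i},\,h_{A_i,B_i}-h_{A_{i+1},B_{i+1}}\}$). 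Differentiating one factor in $\alpha$ gives $\pi c\,\sinc'(\pi\alpha c)=\alpha^{-1}\,v\,\sinc'(v)$ with $v=\pi\alpha c$, and the identity $v\,\sinc'(v)=\cos v-\sinc v$ yields $|v\,\sinc'(v)|\le2$; since the remaining $\ell-1$ factors are bounded by $1$, we get the pointwise bound $|\partial_\alpha\Phi_{{\mathbf A},{\mathbf B},\alpha}(x,y)|\le 2\ell/\alpha$. As $\DD_{{\mathbf A},{\mathbf B}}\subseteq\{0\le x\le y\le 1\}$ has area $\le\tfrac12$, the mean value theorem applied to $\alpha\mapsto\Phi_{{\mathbf A},{\mathbf B},\alpha}(x,y)$ for a.e.\ $(x,y)$, followed by integration, gives the asserted Lipschitz estimate.

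Finally I would assemble the pieces. We may assume $|\alpha-\beta|\le1$, since otherwise the bound follows from the boundedness of $M_\ell$ on $[\gamma_1,\gamma_2]$. Writing $M_\ell(\alpha)-M_\ell(\beta)=\tfrac{6}{\pi^2}\big(\tfrac1\alpha-\tfrac1\beta\big)\Sigma(\alpha)+\tfrac{6}{\pi^2\beta}\sum_{{\mathbf A},{\mathbf B}}\big(J_{{\mathbf A},{\mathbf B}}(\alpha)-J_{{\mathbf A},{\mathbf B}}(\beta)\big)$, the first summand is $\ll_{\gamma_1,\gamma_2}|\alpha-\beta|$ by the bound on $\Sigma(\alpha)$. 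In the second, fix $T\geqslant1$: the terms with $\max_i\{|A_i|,|B_i|\}>T$ contribute $\ll_{\gamma_1,\gamma_2}T^{-\delta_\ell}$ by the tail bound applied at $\alpha$ and at $\beta$, while the $\ll_\ell T^{2(\ell-1)}$ terms with $\max_i\{|A_i|,|B_i|\}\le T$ contribute $\ll_{\gamma_1,\gamma_2}T^{2(\ell-1)}|\alpha-\beta|$ by the Lipschitz bound. Hence $|M_\ell(\alpha)-M_\ell(\beta)|\ll_{\gamma_1,\gamma_2}T^{-\delta_\ell}+T^{2(\ell-1)}|\alpha-\beta|$, and choosing $T=|\alpha-\beta|^{-1/(2(\ell-1)+\delta_\ell)}\geqslant1$ gives the corollary with $\kappa_\ell=\tfrac{\delta_\ell}{2(\ell-1)+\delta_\ell}\in(0,1)$.

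The step I expect to be the main obstacle is the Lipschitz bound for a single $J_{{\mathbf A},{\mathbf B}}$. One cannot simply differentiate $M_\ell$ in $\alpha$ and estimate term by term in the obvious way, because $\iint_{\DD_{{\mathbf A},{\mathbf B}}}|h_{A_i,B_i}(x,y)|\,dx\,dy$ diverges (logarithmically, near the part of $\partial\DD_{{\mathbf A},{\mathbf B}}$ on which $A_iy-B_ix\to0$). What makes the term-by-term argument work regardless is the boundedness of $v\mapsto v\,\sinc'(v)$, which exactly absorbs this divergence and turns each $J_{{\mathbf A},{\mathbf B}}$ into a genuinely Lipschitz function on $[\gamma_1,\gamma_2]$; everything else is a routine head/tail split powered by Proposition~\ref{Peq3.4}.
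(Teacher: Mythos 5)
Your proposal is correct and follows essentially the same route as the paper: a head/tail split of the series powered by Proposition \ref{Peq3.4} (the weight $\max_i\{|A_i|^{\delta_\ell},|B_i|^{\delta_\ell}\}$ giving the tail bound $\ll T^{-\delta_\ell}$), a uniform Lipschitz bound in $\alpha$ for each truncated term, and optimization of $T$ yielding $\kappa_\ell\asymp\delta_\ell/(2(\ell-1)+\delta_\ell)$. The one point where you are more explicit than the paper is the justification of the uniform Lipschitz constant via the boundedness of $v\mapsto v\,\sinc'(v)$, which is exactly what is needed to make the paper's appeal to ``products of bounded Lipschitz functions'' uniform over $(x,y)$ and $({\mathbf A},{\mathbf B})$; this is a worthwhile clarification but not a different argument.
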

\begin{proof}
Consider
$$
G_{F}(\alpha, \ell) = \sum_{\substack{\mathbf{A}, \mathbf{B} \in \mathbb{Z}^{k} \\ (A_i, B_i) = 1 , \forall i}}
\iint_{\mathcal{D}_{\mathbf{A}, \mathbf{B}}} \Psi_{F;\mathbf{A}, \mathbf{B},\alpha}(x,y) dx dy .
$$
If $F$ is continuous, then Proposition \ref{Peq3.4} and the dominated convergence theorem show that
$G_{F}(\alpha,\ell)$ is continuous in $\alpha$. Note that
$M_{\ell}(\alpha) = \frac{6}{\pi^2 \alpha} G_{\widehat{\mathbf 1}}(\alpha, \ell) $,
where we simply denote ${\mathbf 1}:={\mathbf 1}_{[0,1]}$.
Since $\widehat{\mathbf 1}$ is continuous it follows that $M_{\ell}(\alpha)$ is also continuous.
To establish the stronger bound note that
for arbitrary functions $f$ and $g$ we have,
\begin{equation} \label{lipschitzeasy}
|(f\cdot g) (\alpha) - (f \cdot g) (\beta) | \leqslant |f(\alpha) - f(\beta)|\cdot \| g\|_\infty
+ |g(\alpha) - g(\beta)|\cdot \| f\|_\infty .
\end{equation}
Since $\alpha \mapsto \frac{1}{\alpha}$ is Lipschitz continuous on the interval $[\gamma_1, \gamma_2]$,
it is therefore enough to show that
$
G_{\widehat{\mathbf 1}}(\alpha, \ell)
$
satisfies the bound
$$
| G_{\widehat{\mathbf 1}}(\alpha, \ell) - G_{\widehat{\mathbf 1}}(\beta, \ell)|
\ll_{\gamma_1,\gamma_2} |\alpha - \beta|^{\kappa_{\ell}}
$$
for some exponent $\kappa_{\ell} > 0$.

Using Proposition \ref{Peq3.4} we can truncate the expressions
defining $G_{\widehat{\mathbf 1}}(\alpha, \ell)$ and $G_{\widehat{\mathbf 1}}(\beta, \ell)$
at $\max_i \{ |A_i| ,|B_i|\}  \leqslant |\alpha - \beta|^{-\eta}$ at the price of an error term $\ll |\alpha - \beta|^{\eta \delta_{\ell}}$. That is,
$G_{\widehat{\mathbf 1}}(\alpha, \ell) - G_{\widehat{\mathbf 1}}(\beta, \ell)$ is equal to
\begin{equation*}
\sum_{\substack{|A_i| \leqslant |\alpha - \beta|^{-\eta}
\\ 0 < |B_i| \leqslant |\alpha - \beta|^{-\eta}
\\ (A_i, B_i) = 1 , \forall i}} \iint_{\mathcal{D}_{\mathbf{A}, \mathbf{B}}}
\big (  \Psi_{\widehat{\mathbf 1},\mathbf{A}, \mathbf{B}, \alpha}(x,y)
- \Psi_{\widehat{\mathbf 1},\mathbf{A}, \mathbf{B}, \beta}(x,y) \big ) dx dy + O(|\alpha - \beta|^{\eta \delta_{\ell}}),
\end{equation*}
where $$
\Psi_{\widehat{\mathbf 1},\mathbf{A}, \mathbf{B}, \alpha}(x,y) :=
\Psi_{\widehat{\mathbf 1}} \big( \beta_{A_1, B_1, \alpha}(x,y), \ldots,
\beta_{A_{\ell - 1}, B_{\ell - 1}, \alpha}(x,y)\big).
$$
The product of bounded Lipschitz continuous functions is Lipschitz continuous by \eqref{lipschitzeasy} and therefore,
$$\lvert \Psi_{\widehat{\mathbf 1},\mathbf{A}, \mathbf{B}, \alpha}(x,y)
- \Psi_{\widehat{\mathbf 1},\mathbf{A}, \mathbf{B}, \beta}(x,y) \rvert \ll_{\gamma_1,\gamma_2} |\alpha - \beta|. $$
Combining the previous three equations we conclude that,
$$
|G_{\widehat{\mathbf 1}}(\alpha, \ell) - G_{\widehat{\mathbf 1}}(\beta, \ell)| \ll_{\gamma_1,\gamma_2} |\alpha - \beta|^{\kappa_\ell} ,
$$
where $\kappa_\ell := \min(1-2(2\ell-2)\eta, \eta \delta_{\ell}) \in (0,1)$. Taking $\eta > 0$ sufficiently small shows that
$\kappa_{\ell} \in (0,1)$.
\end{proof}

We will require two lemmas for the proof of Proposition \ref{Peq3.4}.
First we record a simple bound for $\Psi_{F; \mathbf{A}, \mathbf{B}}$:

\begin{lemma}\label{L3.1}
Let $I:=\{ i\in [1,k-1]: A_i B_{i+1} - A_{i+1}B_i \neq 0\}$.
Suppose that $B_1 \neq 0$ and $B_k \neq 0$.
Then, for every $\alpha>0$ and $(x,y)\in \DD_{{\mathbf A},{\mathbf B}}$  we have
\begin{equation*}
\sup\limits_{F\in {\mathfrak F}}
\lvert \Psi_{F;{\mathbf A},{\mathbf B},\alpha} (x,y)\rvert  \leqslant
\frac{y^2}{\alpha^{\lvert I\rvert+2} \lvert B_1 B_k\rvert} \prod\limits_{i\in I}
\frac{|A_i y - B_i x|}{\lvert A_i B_{i+1} -A_{i+1} B_i \rvert}  .
\end{equation*}
\end{lemma}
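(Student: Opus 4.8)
The plan is to estimate $|\Psi_{F;\mathbf{A},\mathbf{B},\alpha}(x,y)|$ term by term in the product defining $\psi_F$, using only the two inequalities $|F(u)|\le 1$ and $|F(u)|\le 1/|u|$ guaranteed for $F\in\mathfrak{F}$, together with the observation that on $\DD_{\mathbf{A},\mathbf{B}}$ one has $0<A_jy-B_jx\le 1$ for every $j$ (hence in particular $y>0$, so each $\beta_{A_j,B_j,\alpha}(x,y)$ is well defined).

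First I would record the algebraic identity
\[
\beta_{A_i,B_i,\alpha}(x,y)-\beta_{A_{i+1},B_{i+1},\alpha}(x,y)
=\frac{-\alpha\,(A_iB_{i+1}-A_{i+1}B_i)}{(A_iy-B_ix)(A_{i+1}y-B_{i+1}x)},
\]
obtained by placing $\frac{\alpha B_i}{y(A_iy-B_ix)}-\frac{\alpha B_{i+1}}{y(A_{i+1}y-B_{i+1}x)}$ over a common denominator: the numerator of the combined fraction carries a factor $y$ that cancels one of the two factors $y$ downstairs. This identity is exactly what makes the set $I$ relevant: for $i\in I$ the argument of $F$ in the $i$-th middle factor is nonzero, so there $|F(\cdot)|\le 1/|\cdot|$ applies, whereas for $i\notin I$ only $|F(\cdot)|\le 1$ will be used.

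The computation then runs as follows. Bound the two end factors $F(-\beta_{A_1,B_1,\alpha})$ and $F(\beta_{A_k,B_k,\alpha})$ by $1/|\beta_{A_j,B_j,\alpha}|=y|A_jy-B_jx|/(\alpha|B_j|)$ for $j\in\{1,k\}$ (legitimate since $B_1,B_k\neq 0$); bound each middle factor with $i\in I$ by $1/|\beta_{A_i,B_i,\alpha}-\beta_{A_{i+1},B_{i+1},\alpha}|=|A_iy-B_ix|\,|A_{i+1}y-B_{i+1}x|/(\alpha|A_iB_{i+1}-A_{i+1}B_i|)$ using the identity above; and bound each middle factor with $i\notin I$ by $1$. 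Multiplying these estimates, the powers of $\alpha$ combine to $\alpha^{-(|I|+2)}$, the factor $y^2/|B_1B_k|$ appears, and the denominator $\prod_{i\in I}|A_iB_{i+1}-A_{i+1}B_i|$ appears, precisely as in the claimed bound. The leftover numerator is
\[
|A_1y-B_1x|\,|A_ky-B_kx|\prod_{i\in I}|A_iy-B_ix|\,|A_{i+1}y-B_{i+1}x|,
\]
and to finish I would keep exactly one copy of $|A_iy-B_ix|$ for each $i\in I$ (these assemble into the product $\prod_{i\in I}|A_iy-B_ix|$ of the target) and bound every remaining factor by $1$, which is permitted because $0<A_jy-B_jx\le 1$ throughout $\DD_{\mathbf{A},\mathbf{B}}$. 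Since none of the estimates used anything about $F$ beyond $F\in\mathfrak{F}$, the bound holds for the supremum over $\mathfrak{F}$.

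I do not expect a genuine obstacle: the whole argument is an elementary chain of inequalities. The only points needing attention are the bookkeeping in the last step — checking that the copies of $|A_iy-B_ix|$ retained, one per $i\in I$, are truly available and not double-counted — and the degenerate case $k=1$, where $I=\emptyset$, the middle product is empty, $B_1=B_k$, and $\psi_F(x_1)=F(-x_1)F(x_1)$, so the bound reduces to $|F(\beta_{A_1,B_1,\alpha})|^2\le y^2(A_1y-B_1x)^2/(\alpha^2B_1^2)\le y^2/(\alpha^2B_1^2)$, consistent with the stated formula. One should also remember the convention $(A,0)=|A|$, so that when $B_i=0$ for some interior index $i$ the corresponding $\beta$ vanishes, the adjacent factors of $F$ are evaluated at points where only $|F|\le 1$ is invoked, and everything remains consistent.
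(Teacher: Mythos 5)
Your proof is correct and follows essentially the same route as the paper: the key identity for $\beta_{A_i,B_i,\alpha}-\beta_{A_{i+1},B_{i+1},\alpha}$ is exactly the paper's equation (4.5), and the rest is the same chain of bounds $|F(u)|\leqslant\min\{1,1/|u|\}$ on the end factors and on the differences with $i\in I$, followed by discarding the surplus factors $|A_jy-B_jx|\leqslant 1$. The paper states this in one line; your version merely makes the bookkeeping explicit, and it is accurate.
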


\begin{proof}
The first inequality follows from the bounds
\begin{equation*}
\lvert \psi_F (x_1,\ldots,x_k)\rvert \leqslant \frac{1}{\lvert x_1 x_k\rvert}
\prod\limits_{i \in I} \frac{1}{\lvert x_i-x_{i+1}\rvert} , \qquad
\lvert A_i y-B_i x\rvert \leqslant 1,
\end{equation*}
and from equality
\begin{equation}\label{eq4.5}
\beta_{A_i,B_i,\alpha} (x,y) -\beta_{A_{i+1},B_{i+1},\alpha} (x,y)
=\frac{\alpha (A_{i+1}B_i -A_i B_{i+1})}{(A_i y-B_i x)(A_{i+1} y-B_{i+1}x)} .
\end{equation}
\end{proof}

Our argument will rely crucially on the (non-disjoint) dyadic set equality
$\N =\bigcup_{a\in \N_0} [2^{a}-1,2^{a+1}]$
and on the following ``counting lemma'':

\begin{lemma}\label{counting}
Given ${\mathbf D} = (D_1,\ldots,D_{k-1}) \in \Z^{2k-2}$, ${\mathbf a}=(a_1, \ldots, a_k),
{\mathbf b} = (b_1, \ldots, b_k) \in \N_0^k$, consider the set
\begin{equation*}
{\mathcal S}_{{\mathbf D},{\mathbf a},{\mathbf b}} :=
\left\{ ({\mathbf A},{\mathbf B})\in \Z^{2k}: \begin{matrix} 2^{a_i}-1 \leqslant \lvert A_i\rvert \leqslant 2^{a_i+1},\
2^{b_i}-1 \leqslant \lvert B_i\rvert \leqslant 2^{b_i+1} \\
A_i B_{i+1}-A_{i+1} B_i =D_i,\   (A_i,B_i)=1,\forall i\end{matrix} \right\},
\end{equation*}
with ${\mathbf A}=(A_1,\ldots,A_k)$, ${\mathbf B}=(B_1,\ldots,B_k)$.

For every $x,y\in [0,1]^2$ with $x\leqslant y$ we have
\begin{itemize}
\item[(i)]
$\displaystyle \quad
\sum_{({\mathbf A},{\mathbf B})\in {\mathcal S}_{{\mathbf D},{\mathbf a},{\mathbf b}}} \prod\limits_{i=1}^k
\mathbf{1}_{0 < A_i y - B_i x \leqslant 1} \ll \frac{2^{b_1}}{y}  \prod_{i = 1}^{k - 1} \min \big\{ (1/y) 2^{-a_i} + 1,
(1/x) 2^{-b_i} + 1 \big\}.
$
\item[(ii)]
$\displaystyle  \quad
\sum_{({\mathbf A},{\mathbf B})\in {\mathcal S}_{{\mathbf D},{\mathbf a},{\mathbf b}}} 1
\ll  2^{a_1} 2^{b_1} \prod_{i = 1}^{k - 1} \min\{ 2^{b_{i + 1} - b_{i}} +1, 2^{a_{i + 1} - a_{i}} +1 \} .
$
\end{itemize}
\end{lemma}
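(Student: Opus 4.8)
The plan is to exploit the arithmetic of the linking relations $A_iB_{i+1}-A_{i+1}B_i=D_i$. The key structural fact is that, since $\gcd(A_i,B_i)=1$, for every prescribed $D_i$ the equation is solvable in $(A_{i+1},B_{i+1})$ and its full solution set is the coset $(A_{i+1}^{0},B_{i+1}^{0})+t(A_i,B_i)$, $t\in\Z$. Hence an element of $\mathcal S_{\mathbf D,\mathbf a,\mathbf b}$ is determined by the pair $(A_1,B_1)$ together with integers $t_1,\dots,t_{k-1}$, subject to the dyadic constraints (and, for part (i), the strip constraints). I would also record, writing $u_i:=A_iy-B_ix$, the elementary identities $A_{i+1}u_i-A_iu_{i+1}=xD_i$ and $B_{i+1}u_i-B_iu_{i+1}=yD_i$, from which $u_{i+1}=u_{i+1}^{0}+t_iu_i$ is affine in $t_i$ with slope $u_i$.

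For part (ii) I would run a forward induction along the chain. There are $\ll 2^{a_1}2^{b_1}$ admissible $(A_1,B_1)$. Given $(A_i,B_i)$, the dyadic window $2^{a_{i+1}}-1\le|A_{i+1}^{0}+t_iA_i|\le 2^{a_{i+1}+1}$ confines $t_i$ to $\ll 2^{a_{i+1}}/|A_i|+1\ll 2^{a_{i+1}-a_i}+1$ values (using $|A_i|\ge 2^{a_i}-1$), while the $B$-window gives $\ll 2^{b_{i+1}-b_i}+1$ values; taking the smaller of the two bounds and multiplying over $i=1,\dots,k-1$ gives (ii). The degenerate cases $A_i=0$ or $B_i=0$ (forced by coprimality to have the complementary coordinate $\pm1$, i.e.\ $a_i=0$, resp.\ $b_i=0$) have to be checked by hand: there the linking equation pins down one coordinate of $(A_{i+1},B_{i+1})$ and the count is again of the same shape.

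For part (i) the strip indicators $\mathbf 1_{0<u_i\le1}$ enter. The prefactor $2^{b_1}/y$ is immediate: for each of the $\ll 2^{b_1}$ admissible $B_1$, the constraint $0<A_1y-B_1x\le1$ confines $A_1$ to an interval of length $1/y$, giving $\ll 1/y$ values since $y\le1$. For a single step there are two competing bounds on the number of admissible $t_i$: from $0<u_{i+1}=u_{i+1}^{0}+t_iu_i\le1$ one gets $\ll 1/u_i+1$, and from the dyadic windows one gets $\ll 2^{a_{i+1}-a_i}+1$ and $\ll 2^{b_{i+1}-b_i}+1$. The argument then hinges on a sign dichotomy. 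If $A_iB_i\le0$, then (as $u_i>0$ forces $A_i\ge0\ge B_i$) one has $u_i=|A_i|y+|B_i|x\ge\max\{|A_i|y,|B_i|x\}$, whence $1/u_i+1\ll\min\{(1/y)2^{-a_i}+1,(1/x)2^{-b_i}+1\}$ directly; dually, if the next pair satisfies $A_{i+1}B_{i+1}\le0$, then $0<u_{i+1}\le1$ forces $|A_{i+1}|\le1/y$ and $|B_{i+1}|\le1/x$, so $2^{a_{i+1}-a_i}+1\ll(1/y)2^{-a_i}+1$ and $2^{b_{i+1}-b_i}+1\ll(1/x)2^{-b_i}+1$. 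In either situation the claimed per-step factor is reproduced, and multiplying along the chain finishes this generic case.

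The residual case — where some index is degenerate, or sits inside a maximal run of consecutive indices all of whose pairs have $A_\ell B_\ell>0$ — is the technical heart of the argument. On a same-sign run (say both coordinates positive) the strip forces $\bigl||A_\ell|-|B_\ell|\bigr|\le 1/y$, so either $2^{a_\ell}\asymp 2^{b_\ell}$ or both are $\ll 1/y$; in the latter sub-case the dyadic bound again collapses to the claimed one. In the former sub-case $u_\ell$ can be arbitrarily small, and the naive factorization ``(number of $(A_1,B_1)$)$\,\times\prod$(per-step)'' genuinely fails: the apparent loss $2^{a_{\ell+1}-a_\ell}$ at a step is offset by the scarcity of pairs $(A_\ell,B_\ell)$ that can be continued inside a fixed dyadic/cross-product configuration (and similarly a degenerate pair $(0,\pm1)$ forces the neighbouring $A$-coordinate to equal $\pm D$, so its dyadic block is pinned down). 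I would therefore bound the contribution of such a run \emph{globally} — organising the count around an extremal index of the run rather than peeling off $(A_1,B_1)$, applying part (ii) on the interior of the run and invoking the $\ll 1/u+1$ strip bound only where the run meets an opposite-sign pair — and then verify that summing these bounds over all runs recombines into $\frac{2^{b_1}}{y}\prod_{i=1}^{k-1}\min\{(1/y)2^{-a_i}+1,(1/x)2^{-b_i}+1\}$. Making this bookkeeping telescope correctly across run boundaries, and absorbing all the degenerate pairs inside it, is where essentially all the remaining work lies; this is the step I expect to be the main obstacle.
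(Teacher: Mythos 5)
Your setup and your proof of part (ii) coincide with the paper's: the paper also parametrizes the solutions of $A_iB_{i+1}-A_{i+1}B_i=D_i$ as $(A_{i+1},B_{i+1})=(x_i,y_i)+k_i(A_i,B_i)$, counts $(A_1,B_1)$ first, and bounds the number of admissible $k_i$ at each step by $\min\{2^{a_{i+1}-a_i}+1,\,2^{b_{i+1}-b_i}+1\}$. That part is fine, as are, in part (i), the prefactor $2^{b_1}/y$ and the per-step analysis in the opposite-sign situation $A_iB_i\leqslant 0$, where indeed $u_i=A_iy-B_ix\geqslant\max\{|A_i|y,|B_i|x\}$.

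For part (i), however, what you have written is not a proof. The case you yourself single out as ``the main obstacle'' --- a run of indices on which $A_iB_i>0$, where the strip count $\ll 1/u_i+1$ can vastly exceed $\min\{(1/y)2^{-a_i}+1,(1/x)2^{-b_i}+1\}$ because $u_i$ may be far smaller than $|A_i|y$ --- is only described, not carried out; and the auxiliary claim you lean on there (that a same-sign pair in the strip satisfies $\bigl||A_\ell|-|B_\ell|\bigr|\leqslant 1/y$) is false when $x\ll y$, e.g. $A_\ell=1$, $B_\ell$ arbitrary, $B_\ell x<y\leqslant 1$. So the promised telescoping over runs is not just unfinished bookkeeping; its ingredients are not yet in place. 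For comparison, the paper disposes of this step in one line: given $(A_i,B_i)$, it reads $0<A_{i+1}y-B_{i+1}x\leqslant 1$ as confining $A_{i+1}$ to an interval of length $1/y$ and counts the progression $A_{i+1}=x_i+k_iA_i$ (spacing $|A_i|\geqslant 2^{a_i}$) inside it, giving $\ll (1/y)2^{-a_i}+1$ values of $k_i$ pointwise in $(A_i,B_i)$; the symmetric argument with $B_{i+1}$ gives the other term of the minimum. Your observation that the endpoint $B_{i+1}x/y$ of that interval itself moves with $k_i$, so that the effective spacing is $u_i/y$ rather than $|A_i|$, is exactly where you part company with the paper --- and it is a legitimate objection (with $A_i=2^{a+1}$, $B_i=2^{a+1}-1$, $x=y=2^{-a}$, $D_i=1$ the number of admissible $k_i$ landing in the dyadic block $a_{i+1}=2a$ is of order $2^{a}$, not $O(1)$, so the pointwise per-step bound really does fail; only the average over $(A_1,B_1)$ saves the stated estimate). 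Having raised the objection, you must resolve it, either by supplying the averaged argument you sketch or by otherwise recovering the claimed product bound; neither is done, so part (i) remains unproved in your proposal.
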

\begin{proof}
To prove (i), notice that given $B_1$ and $A_1$, the condition $A_1 B_2 -A_2 B_1 =D_1$ implies that
\begin{equation}\label{eq4.6}
A_2 =x_1 +k_1 A_1 \quad \mbox{\rm and}\quad B_2 =y_1+ k_1 B_1 ,
\end{equation}
with $(x_1,y_1)$ particular solution of $A_1 y-B_1 x=D_1$ (so
$x_1 \equiv -\overline{B_1} D_1 \pmod{A_1}$ and
$y_1\equiv \overline{A_1} D_1 \pmod{B_1}$) and $k_1 \in\Z$. In addition,
since $\frac{B_2 x}{y}\leqslant A_2 \leqslant \frac{B_2 x}{y} +\frac{1}{y}$
has to fit into an interval of length $\leqslant \frac{1}{y}$
and $\lvert A_1 \rvert \geqslant 2^{a_1}$, the number of
choices of $k_1$ for fixed $(A_1,B_1)$ is
\begin{equation*}
\ll (1/y) 2^{-a_1} +1 ,
\end{equation*}
regardless of the choice of $(x,y)$.
Repeating the same reasoning with $B_2$ in place of $A_2$ we see that $B_2$ is also required to be contained
in a short interval of length $\leqslant \frac{1}{x}$, since $-\frac{A_2 y}{x} \leqslant -B_2 \leqslant -\frac{A_2 y}{x}+\frac{1}{x}$.
By the same argument it follows that the
number of admissible choices for $k_1$ is also
\begin{equation*}
\ll (1/x) 2^{-b_1}+1 .
\end{equation*}
Therefore, regardless on $(x,y)$, the number of choices for $k_1$ is
\begin{equation*}
\ll \min\big\{ (1/y) 2^{-a_1}+1,(1/x) 2^{-b_1}+1\big\} .
\end{equation*}
Continuing, we see that in general, given $(A_i,B_i)$, we have $A_i B_{i+1}-A_{i+1}B_i =D_i$
and therefore $(A_{i+1},B_{i+1})$ is parameterized as
\begin{equation*}
A_{i+1}=x_i +k_i A_i \quad \mbox{\rm and}\quad B_{i+1}=y_i +k_i B_i ,
\end{equation*}
with $(x_i,y_i)$ fixed solution of $A_i y-B_i x=D_i$ and $k_i\in \Z$.
It follows that if we are given $A_1$ and $B_1$, then the number of admissible choices for
$A_2,B_2,\ldots,A_k,B_k$ is
\begin{equation*}
\ll \prod_{i=1}^{k-1} \min \big\{ (1/y) 2^{-a_i}+1,(1/x)2^{-b_i}+1\big\} .
\end{equation*}
Finally, the number of choices for $(A_1,B_1)$ is $\leqslant 2^{b_1+1}(1+\frac{1}{y}) \ll \frac{2^{b_1}}{y}$
regardless of $(x,y)$ since
$\lvert B_1\rvert \leqslant 2^{b_1 + 1}$ and $\frac{B_1 x}{y} \leqslant A_1 \leqslant
\frac{B_1 x}{y} + \frac{1}{y}$. This proves (i).

Part (ii) is proved in a similar way by first selecting $(A_1,B_1)$
in at most $2^{a_1+1}2^{b_1+1}$ ways, and then parameterizing as in \eqref{eq4.6}.
Since we require $\lvert B_2 \rvert \leqslant 2^{b_2+1}$ and
$\lvert B_1\rvert \geqslant 2^{b_1+1}$, the number of choices for $k_1$ is $\ll 2^{b_2-b_1} +1$, and therefore
the number of choices for $(A_2,B_2)$ is $\ll 2^{b_2-b_1} +1$ as well.
Now that we fixed $(A_2,B_2)$, it is seen in a similar way that the number of choices for $(A_3,B_3)$ is
$\ll 2^{b_3-b_2} +1$, and so on, showing that the left hand side in (ii) is
$\ll 2^{a_1+b_1} \prod_{i=1}^{k-1} (2^{b_{i+1}-b_i}+1)$.
Finally the roles of $A_i$ and $B_i$ can be interchanged to prove
that the left hand side in (ii) is
$\ll 2^{a_1+b_1} \prod_{i=1}^{k-1} (2^{a_{i+1}-a_i}+1)$.
\end{proof}

With these two lemmas at hand, we are ready to start the proof of Proposition \ref{Peq3.4}.

First we dispose of the easy case, $k=1$.
The constraint $0< A_1 y-B_1 x\leqslant 1$ gives $\frac{B_1 x}{y} \leqslant
A_1 \leqslant \frac{B_1 x}{y}+\frac{1}{y}$, and so for fixed $B_1$ the number of admissible
$A_1$'s is $\ll \frac{1}{y}$ and $\lvert A_1 \rvert \leqslant \frac{\lvert B_1\rvert+1}{y}$.
On the other hand, if $B_1\neq 0$, then
\begin{equation*}
\lvert \Psi_{F;A_1,B_1,\alpha} (x,y) \rvert \leqslant \frac{y^2}{\alpha^2 \lvert B_1\rvert^2} ,
\end{equation*}
providing, for every $\delta \in (0,1)$,
\begin{equation*}
\begin{split}
\lvert I_{1,\delta,\alpha}(F)\rvert \ll & \sum\limits_{B_1\in \Z^*}
\int_0^1 \frac{y^2}{\alpha^2 \lvert B_1\rvert^2} \cdot \frac{1}{y} \cdot
\frac{(\lvert B_1\rvert +1)^\delta}{y^\delta} \int_0^y dx dy \\
& +\sum\limits_{A_1 \in \N} \iint_{0\leqslant x\leqslant y \leqslant 1/A_1} dx dy
\ll_{\delta} \alpha^{-2} +1 .
\end{split}
\end{equation*}

Secondly, proceeding by induction on $k$ allows us to reduce ourselves to the case when
$B_1 \neq 0$, $B_k \neq 0$, and $A_{i} B_{i + 1} - A_{i + 1} B_{i} \neq 0$ for all $1 \leqslant i < k$.
Indeed, notice that equality \eqref{eq4.5} shows that when
there exists an $i$ such that $A_{i+1}B_i-A_i B_{i+1}=0$, the
requirements $(A_i,B_i)=(A_{i+1},B_{i+1})=1$ lead to $A_{i+1}=A_i$ and
$B_{i+1}=B_i$, thus Proposition \ref{Peq3.4} follows from the situation
where $k$ is replaced by $k-1$ (see \eqref{eq4.2}). Similarly if $B_1 = 0$ then
$\beta_{A_1, B_1, \alpha}(x,y) = 0$, therefore
\begin{align*}
\beta_{A_1, B_1, \alpha}(x,y) - \beta_{A_2, B_2, \alpha}(x,y) = - \beta_{A_2, B_2, \alpha}(x,y).
\end{align*}
Since $F(0) = 1$ and $F(x) = F(-x)$, Proposition \ref{Peq3.4} once again reduces to the case of
$k - 1$ variables. The same argument allows us to assume that $B_k \neq 0$.

According to Lemma \ref{L3.1} and the previous remark, it will suffice to establish that the following expression converges for some
$\delta = \delta_\ell > 0$,
\begin{equation}\label{eq4.7}
\begin{split}
\sum\limits_{\substack{A_1,\ldots,A_k, B_1,\ldots,B_k \in \Z \\ A_i B_{i+1}-A_{i+1}B_i \neq 0
\\ (A_i,B_i)=1,\forall i \\ B_1 \neq 0, B_k \neq 0 }} &
\max_{i \in [1,k]} \{ |A_i|^\delta, |B_i|^\delta \}
\frac{1}{\lvert B_1 B_k \rvert} \prod\limits_{i=1}^{k-1} \frac{1}{\lvert A_i B_{i+1}-A_{i+1}B_i\rvert} \\  \times &
\iint_{0\leqslant x\leqslant y\leqslant 1} y^2
\prod\limits_{i=1}^k (A_i y-B_i x) {\mathbf 1}_{0 < A_i y-B_i x\leqslant 1} \, dx dy.
\end{split}
\end{equation}

Fix some \fbox{$\varepsilon \in (0,\frac{1}{1000 k})$}.
We start by making several reductions, the outcome of which is that we can focus on the scenario
where both of the following conditions hold:
\begin{itemize}
\item[(I)]
The range of integration over $y$ is restricted to $y> \max_{i \in [1,k]} |B_i|^{-\varepsilon^2}$.
\item[(II)]
For all $i\in \llbracket 1,k-1\rrbracket$ we have $\lvert A_i B_{i+1}-A_{i+1} B_i \rvert \ll
\max_{i \in [1,k]} |B_i|^{\varepsilon^2}$.
\end{itemize}
We then use two different arguments to handle the total contributions of the integers $A_1, \ldots, A_k, B_1, \ldots, B_k$
for which there exists an index $j \in [1,k-1]$ such that $|B_{j + 1}| > |B_j| \cdot \max_{i \in [1,k]} |B_i|^{\varepsilon}$, and
respectively the contributions of the integers for which there is no such index.
Finally, we set $\delta=\delta_\ell = \varepsilon^3$.

In the remaining part of this section we will group the integers $A_i$ and $B_i$ into dyadic ranges
$2^{a_i}-1 \leqslant \lvert A_i \rvert \leqslant 2^{a_i+1}$ and
$2^{b_i}-1 \leqslant \lvert B_i \rvert \leqslant 2^{b_i+1}$,with $a_i$ and $b_i$ running
through the non-negative integers. Note that the intervals $[2^{a}-1,2^{a+1}]$ are
overlapping, but this is not a problem because in this section we only add or integrate
non-negative quantities.

\subsection{Disposing of the $y$'s for which $y < \max_{i \in [1,k]} |B_i|^{-\varepsilon^2}$}
With the grouping described above we can re-phrase the condition
$y < \max |B_i|^{-\varepsilon^2}$ as $y \ll 2^{-\varepsilon^2 \max (b_i)}$.
Notice also that
\begin{equation}\label{eq4.8}
\begin{split}
(A_i y-B_i x) {\mathbf 1}_{0 < A_i y-B_i x\leqslant 1} & \leqslant 2 \min
\big\{ \max \{ \lvert A_i \rvert y, \lvert B_i\rvert x\} ,1\big\} \\ & \ll \min \big \{ \max \{ 2^{a_i} y , 2^{b_i} x\big \}, 1 \big \}
\end{split}
\end{equation}
and
\begin{equation}\label{eq4.9}
\min\big\{ (1/y)2^{-a_i}+1 ,(1/x)2^{-b_i}+1\big\} \min \big\{
\max \{ 2^{a_i} y,2^{b_i} x\} ,1\big\} \leqslant 1.
\end{equation}
The conditions $0 < A_i y-B_i x\leqslant 1$ and $0\leqslant x\leqslant y\leqslant 1$ imply
that $\lvert A_i\rvert \leqslant \lvert B_i\rvert +\frac{1}{y}$, so that if we assign
$A_i B_{i+1}-A_{i+1}B_i =D_i$, then we have
\begin{equation*}
\prod\limits_{i=1}^{k-1} \lvert D_i \rvert  \leqslant 2^{2(b_1+\cdots +b_k)}
( 1+ 1/y )^k =: L( {\mathbf b}, y) .
\end{equation*}
Therefore, using also \eqref{eq4.8}, and $\max_i \{ \lvert A_i\rvert^\delta,\lvert B_i\rvert^\delta\} \leqslant
2^{\varepsilon^3 \max (a_i)} 2^{\varepsilon^3 \max (b_i)}$, we see that the expression in \eqref{eq4.7} is
\begin{equation}\label{eq4.10}
\begin{split}
\ll & \sum\limits_{b_1,\ldots,b_k \geqslant 0} \hspace{-6pt} \frac{2^{\varepsilon^3 \max(b_i)}}{2^{b_1+b_k}} \hspace{-2pt}
\int_{0}^{2^{-\varepsilon^2 \max(b_i)}} \hspace{-25pt} y^2 \hspace{-5pt}
\sum\limits_{1\leqslant |D_1 \cdots D_{k-1}| \leqslant L({\mathbf b},y)}
\frac{1}{|D_1| \cdots |D_{k-1}|} \hspace{-6pt}
 \sum\limits_{\substack{a_1,\ldots ,a_k \\ 2^{a_i}-1 \leqslant 2^{b_i+1} +1/y}} \hspace{-20pt} 2^{\varepsilon^3 \max(a_i)} \\ & \times
\int_0^y \sum\limits_{{\mathbf A},{\mathbf B} \in {\mathcal S}_{{\mathbf D},{\mathbf a},{\mathbf b}}} \prod\limits_{i=1}^{k-1}
\min \big\{ \max\{ 2^{a_i} y, 2^{b_i} x \} ,1\big\}
{\mathbf 1}_{0 < A_i y-B_i x\leqslant 1} \, dx dy .
\end{split}
\end{equation}
According to Lemma \ref{counting} and \eqref{eq4.9} the expression after the innermost integral is
$$
\ll \frac{2^{b_1}}{y} \prod_{i = 1}^{k - 1} \min \big\{ (1/y)2^{-a_i} + 1 , (1/x)2^{-b_i} + 1 \big\}
\min \big \{ \max \{ 2^{a_i} y, 2^{b_i} x\} , 1 \big \} \ll \frac{2^{b_1}}{y} .
$$
Moreover, uniformly in $y\in (0,1]$,
\begin{equation*}
\begin{split}
& \sum\limits_{1\leqslant \lvert D_1 \cdots D_{k-1} \rvert \leqslant L({\mathbf b},y)}
\frac{1}{|D_1| \cdots |D_{k-1}|}
\ll \big( \log L({\mathbf b},y)\big)^k \\
& \ll_k \Big( b_1+\cdots +b_k +\log ( 1+ 1/y ) \Big)^k
\ll_k (b_1+\cdots+b_k)^k \max\limits_{j\in [0,k]} \big( \log ( 1+1/y )\big)^j
\end{split}
\end{equation*}
and
\begin{equation*}
\sum\limits_{\substack{a_1,\ldots,a_k \\ 2^{a_i}-1 \leqslant 2^{b_i+1}+1/y}} 1 \ll
\prod\limits_{i=1}^k \log (2^{b_i+1} +1/y) \ll b_1 \cdots b_k \big( \log (1+1/y)\big)^k ,
\end{equation*}
so
\begin{equation*}
\begin{split}
\sum\limits_{\substack{a_1,\ldots,a_k \\ 2^{a_i}-1 \leqslant 2^{b_i+1}+1/y}}\hspace{-5pt}  & 2^{\varepsilon^3 \max(a_i)} \ll \hspace{-6pt}
\sum\limits_{\substack{a_1,\ldots,a_k \\ 2^{a_i}-1 \leqslant 2^{b_i+1}+1/y}} \hspace{-5pt} \big( 2\cdot 2^{\max (b_i)} +1/y
\big)^{\varepsilon^3} \\ & \ll \hspace{-12pt} \sum\limits_{\substack{a_1,\ldots,a_k \\ 2^{a_i}-1 \leqslant 2^{b_i+1}+1/y}} \hspace{-5pt}
2^{\varepsilon^3 \max (b_i)} y^{-\varepsilon^3}
 \ll b_1 \cdots b_k 2^{\varepsilon^3 \max (b_i)} y^{-\varepsilon^3} \big( \log (1+1/y)\big)^k .
\end{split}
\end{equation*}

It follows that the whole expression from \eqref{eq4.10} is bounded by
\begin{equation*}
\begin{split}
\sum\limits_{b_1,\ldots,b_k \geqslant 0} & \frac{2^{2\varepsilon^3 \max (b_i)}(b_1+\cdots +b_k)^k}{2^{b_1+b_k}} 2^{b_1}
\int_0^{2^{-\varepsilon^2 \max (b_i)}} y^{1-\varepsilon^2} \max\limits_{j\in [0,k]} \big( \log (1+1/y)\big)^{j+k} \, dy \\
& = \sum\limits_{b_1,\ldots,b_k \geqslant 0} 2^{2\varepsilon^3 \max (b_i)}(b_1+\cdots +b_k)^k
\int_{2^{\varepsilon^2 \max (b_i)}}^\infty \hspace{-10pt}
\frac{\max_{j\in [k,2k]} \big( \log (1+u)\big)^j}{u^{3-\varepsilon^2}}\, du \\
& \ll \sum\limits_{b_1,\ldots,b_k \geqslant 0} \frac{(b_1+\cdots +b_k)^k}{2^{(\varepsilon^2 -2\varepsilon^3)\max (b_i)}}
\ll_k \bigg( \sum\limits_{b=1}^\infty \frac{b^{2k}}{2^{(\varepsilon^2-2\varepsilon^3)b/k}} \bigg)^k \ll_k 1.
\end{split}
\end{equation*}

\subsection{Disposing of $k$-tuples of integers with
$\lvert A_j B_{j+1}-A_{j+1} B_j \rvert > \max_{i \in [1,k]} \lvert B_i \rvert^{\varepsilon^2}$ for some $j \in [1,k-1]$}
By the union bound and the bound provided by Lemma \ref{L3.1} the contribution of such integers is
\begin{align*}
\ll \sum\limits_{j=1}^k \sum\limits_{\substack{A_1,\ldots,A_k,B_1,\ldots,B_k \in \Z \\ A_i B_{i+1}-A_{i+1}B_i \neq 0 \\
(A_i,B_i)=1, \forall i}} &    \hspace{-12pt}
\frac{\max_{i \in [1,k]} \{ \lvert A_i \rvert^{\varepsilon^3},\lvert B_i\rvert^{\varepsilon^3}\}}{\lvert B_1 B_k\rvert}\cdot
\frac{1}{\max_{i \in [1,k]} \lvert B_i \rvert^{\varepsilon^2}}
\prod\limits_{i\neq j} \frac{1}{\lvert A_i B_{i+1} -A_{i+1} B_i \rvert} \\
\times & \iint_{0\leqslant x\leqslant y\leqslant 1} y^2
\prod\limits_{i=1}^k (A_i y-B_i x) {\mathbf 1}_{0 < A_i y-B_i x\leqslant 1} \, dx dy .
\end{align*}
It is enough to show that each of the inner expressions is convergent.
We fix, for all $i\neq j$, values $D_i=A_i B_{i+1}-A_{i+1} B_i$.
As before, we have $\prod_{i\neq j} \lvert D_i\rvert \leqslant L({\mathbf b},y)$ with
$L({\mathbf b},y)=2^{2(b_1+\cdots+b_k)} (1+\frac{1}{y})^k$. We are thus led to the following expression:
\begin{equation}\label{eq4.11}
\begin{split}
& \sum\limits_{b_1,\ldots,b_k\geqslant 0} \hspace{-8pt} \frac{2^{(\varepsilon^3 - \varepsilon^2) \max(b_i)}}{2^{b_1+b_k}}
\hspace{-3pt} \int_0^1  y^2 \hspace{-10pt}
\sum\limits_{\substack{D_1,\ldots,D_{j-1},D_{j+1},\ldots,D_{\ell-1} \\
\prod_{i \neq j} \lvert D_i \rvert \leqslant L({\mathbf b},y)}}
\prod\limits_{\substack{1 \leqslant i \leqslant k - 1 \\ i\neq j}} \frac{1}{\lvert D_i \rvert} \hspace{-8pt}
\sum\limits_{\substack{a_1,\ldots,a_k \\ 2^{a_i}-1 \leqslant 2^{b_i+1}+1/y}}
\hspace{-8pt} 2^{\varepsilon^3 \max(a_i)}
\\ & \times
\int_0^y \hspace{-3pt}  \sum\limits_{\substack{A_1,\ldots,A_k,B_1,\ldots,A_k\in\Z \\
2^{a_i}-1  \leqslant \lvert A_i \rvert \leqslant 2^{a_i+1} \\
2^{b_i}-1  \leqslant \lvert B_i \rvert \leqslant 2^{b_i+1} \\
A_iB_{i+1}-A_{i+1} B_i =D_i, \forall i\neq j \\ (A_i,B_i)=1,\, \forall i}} \hspace{-4pt}
\prod\limits_{i=1}^k \min\big\{ \max \{ 2^{a_i} y, 2^{b_i} x\},1\big\}
{\mathbf 1}_{0 < A_i y-B_i x\leqslant 1} \, dx dy.
\end{split}
\end{equation}
We now apply Lemma \ref{counting} twice, first to the variable $(A_i, B_i)$ with $i \leqslant j$, and then
to the variables $(A_\ell, B_\ell)$ with $j + 1 \leqslant \ell \leqslant k$ (in particular in the second application we reverse the
order of the variables and identify $A_{k - i}, B_{k - i}$ with $A_{i + 1}, B_{i + 1}$ for $i = 1,\ldots, k - j - 1$).
This gives the following two estimates:
\begin{align*}
\sum_{\substack{A_1,\ldots,A_j ,B_1,\ldots,B_j \in \Z \\
2^{a_i}-1  \leqslant |A_i| \leqslant 2^{a_i+1} , \forall i \leqslant j \\
2^{b_i}-1 \leqslant |B_i| \leqslant 2^{b_i+1} , \forall i \leqslant j\\
A_{i} B_{i + 1} - A_{i + 1} B_{i} = D_{i}, \forall i < j \\
(A_i, B_i) = 1 ,\forall i \leqslant j}}  \prod\limits_{i=1}^j
\mathbf{1}_{0 < A_i y - B_i x \leqslant 1} & \ll \frac{2^{b_1}}{y} \prod_{i = 1}^{j - 1}
\min \big\{ (1/y)2^{-a_i} + 1, (1/x)2^{-b_i} + 1 \big\}, \\
\sum_{\substack{A_{j+1},\ldots,A_k,B_{j+1},\ldots,B_k \in \Z \\
2^{a_i}-1 \leqslant |A_i| \leqslant 2^{a_i+1} , \forall i \geqslant j + 1 \\
2^{b_i}-1 \leqslant |B_i| \leqslant 2^{b_i+1} , \forall i \geqslant j + 1\\
A_{i} B_{i + 1} - A_{i + 1} B_{i} = D_{i} , \forall i > j + 1
\\ (A_i, B_i) = 1, \forall i \geqslant j + 1}} \hspace{-5pt} \prod\limits_{i=j+1}^k \mathbf{1}_{0 < A_i y - B_i x \leqslant 1}
& \ll \frac{2^{b_k}}{y} \prod_{i = j + 2}^{k} \min \big\{ (1/y)2^{-a_i} + 1, (1/x)2^{-b_i} + 1 \big\} .
\end{align*}
In conjunction with \eqref{eq4.9} this shows that the expression inside the innermost integral in \eqref{eq4.11} is
$$
\ll \frac{2^{b_1 + b_k}}{y^2} \hspace{-3pt} \prod\limits_{i\neq j,j+1} \hspace{-6pt} \min\{ (1/y)2^{-a_i}+1,(1/x)2^{-b_i}+1\}
\min \big\{ \max \{ 2^{a_i} y, 2^{b_i} x\},1\big\} \ll \frac{2^{b_1 + b_k}}{y^2} .
$$
Using this bound and proceeding as in the previous case for the other sums, we conclude that
\eqref{eq4.11} is
\begin{equation*}
\begin{split}
\ll & \sum\limits_{b_1,\ldots,b_k\geqslant 0} 2^{(\varepsilon^3 - \varepsilon^2) \max(b_i)}
\int_{0}^{1} y
\sum\limits_{\substack{D_1,\ldots,D_{j-1},D_{j+1},\ldots,D_{k-1} \\
1\leqslant \prod_{i\neq j} \lvert D_i \rvert \leqslant L({\mathbf b},y)}}
\prod\limits_{\substack{ 1 \leqslant i \leqslant k - 1 \\ i\neq j}} \frac{1}{\lvert D_i\rvert}
\sum\limits_{\substack{a_1,\ldots,a_k \\ 2^{a_i}-1 \leqslant 2^{b_i+1}+1/y}} \hspace{-10pt} 2^{\varepsilon^3 \max(a_i)} dy .
%
\end{split}
\end{equation*}
Using also $2^{\varepsilon^3 \max (a_i)} \ll_\varepsilon 2^{\varepsilon^3 \max (b_i)} y^{-\varepsilon^3}$
and other estimates from the previous subsection we see that this is
\begin{equation}\label{eq4.12}
\ll_{k,\varepsilon} \sum_{b_1, \ldots, b_k \geqslant 0} \frac{(b_1+\cdots +b_k)^{2k}}{2^{(\varepsilon^2-2\varepsilon^3) \max(b_i)}}
\int_1^\infty \frac{\max_{j\in [k,2k]} \big( \log(1+u)\big)^j}{u^{2-\varepsilon^3}}\, du \ll_{k} 1.
\end{equation}


\subsection{(II) is fulfilled and $b_{j + 1} - b_{j} \leqslant \varepsilon \max_{i \in [1,k]} b_i$ for all $j \in [1,k-1]$}
Since (II) is fulfilled we have $\lvert D_i \rvert \leqslant 2^{\varepsilon \max_{i \in [1,k]} b_i}$ for all $i \in [1,k-1]$, so
it suffices to bound above the expression
\begin{equation}\label{eq4.13}
\begin{split}
\sum\limits_{b_1,\ldots,b_k \geqslant 0} & \frac{2^{\varepsilon^3 \max(b_i)}}{2^{b_1+b_k}}
\sum\limits_{\substack{D_1,\ldots,D_{k-1} \\ 1\leqslant \lvert D_i \rvert \leqslant 2^{\varepsilon \max (b_i)}}}
\frac{1}{|D_1| \cdots |D_{k-1}|}
 \\
& \times \int_0^1 y^2 \sum\limits_{\substack{a_1,\ldots,a_k \\ 2^{a_i}-1 \leqslant 2^{b_i+1}+1/y}}
\hspace{-10pt} 2^{\varepsilon^3 \max(a_i)}
\int_0^y \sum\limits_{({\mathbf A},{\mathbf B}) \in {\mathcal S}_{{\mathbf D},{\mathbf a},{\mathbf b}}} \prod\limits_{i=1}^k
{\mathbf 1}_{0 < A_i y-B_i x \leqslant 1} \, dx dy.
\end{split}
\end{equation}
Next notice that $0 < A_i y-B_i x\leqslant 1$ implies that
$\lvert x-\frac{A_i}{B_i} y\rvert \leqslant \frac{1}{\lvert B_i\rvert}$.
Therefore the contribution of the integral over $x$ is
$\ll \min_{i \in [1,k]} \frac{1}{|B_i|} \ll 2^{-\max_{i \in [1,k]} b_i}$.

Using that $b_{j + 1} - b_{j} \leqslant \varepsilon \max_{i\in [1,k]} b_i$ for all $j \leqslant k - 1$ and Lemma \ref{counting},
we infer
\begin{equation}\label{eq4.14}
\sum\limits_{({\mathbf A},{\mathbf B}) \in {\mathcal S}_{{\mathbf D},{\mathbf a},{\mathbf b}}} 1 \ll
2^{a_1} 2^{b_1} \prod\limits_{i=1}^{k-1} (2^{b_{i+1}-b_i}+1) \ll
2^{a_1+b_1} \cdot 2^{k\varepsilon \max (b_i)} .
\end{equation}
It further follows from \eqref{eq4.14} that the expression in \eqref{eq4.13} is
\begin{equation*}
\begin{split}
\ll \sum\limits_{b_1,\ldots,b_k \geqslant 0} & \frac{2^{\varepsilon^3 \max(b_i)}}{2^{b_1+b_k}}
\sum\limits_{\substack{D_1,\ldots,D_{k-1} \\ 1\leqslant \lvert D_i \rvert \leqslant 2^{\varepsilon \max (b_i)}}}
\frac{1}{|D_1| \cdots |D_{k-1}|} \\
& \times \int_0^1 y^2 \sum\limits_{\substack{a_1,\ldots,a_k \\
2^{a_i}-1 \leqslant 2^{b_i+1}+1/y}} 2^{b_1} \cdot 2^{\varepsilon^3 \max(a_i)} \cdot
2^{k\varepsilon \max(b_i)} \cdot 2^{-\max (b_i)} \, dy .
\end{split}
\end{equation*}
Proceeding as in the previous sections to handle the sums over $a_i$ and the integral, we conclude that this is
bounded above by the quantity in \eqref{eq4.12}.

\subsection{(I) and (II) are fulfilled and there exists an index $j \in [1,k - 1]$ such that
$b_{j + 1} - b_{j} > \varepsilon \max_{i \in [1,k]} b_{i}$}

In this case because of (I) the range of integration is restricted to $y> 2^{-\varepsilon^2 \max_{i \in [1,k]} b_i}$ and we take
$\lvert D_i \rvert \leqslant 2^{\varepsilon^2 \max_{i \in [1,k]} b_i}$ due to (II). Note that the $\varepsilon^2$ in the bound for
$|D_i|$
is important because will be matched against the larger $\varepsilon$ in $b_{j + 1} - b_{j} \geqslant \varepsilon
\max_{i\in [1,k]} b_i$ at a crucial
point in the argument. Therefore in this case it is enough to bound
\begin{equation}\label{eq4.15}
\begin{split}
& \sum\limits_{\substack{b_1,\ldots,b_k\geqslant 0 \\  \exists j, b_{j+1}-b_j \geqslant \varepsilon \max (b_i)}}
\hspace{-20pt} \frac{2^{\varepsilon^3 \max(b_i)}}{2^{b_1+b_k}} \hspace{-10pt}
\sum\limits_{\substack{D_1,\ldots,D_{k-1} \\ 1\leqslant \lvert D_i \rvert \leqslant 2^{\varepsilon^2 \max(b_i) }}}
\hspace{-7pt} \frac{1}{\lvert D_1 | \cdots |D_{k-1}\rvert} \int_{2^{-\varepsilon^2 \max(b_i)}}^1 y^2    \\
& \quad \times \hspace{-12pt} \sum\limits_{\substack{a_1,\ldots,a_k \\ 2^{a_i}-1 \leqslant 2^{b_i+1}+1/y}}
\hspace{-15pt} 2^{\varepsilon^3 \max(a_i)} \int_0^y
\sum\limits_{({\mathbf A},{\mathbf B}) \in {\mathcal S}_{{\mathbf D},{\mathbf a},{\mathbf b}}} \prod\limits_{i=1}^k (A_i y-B_i x)
{\mathbf 1}_{0 < A_i y-B_i x\leqslant 1} \, dx dy.
\end{split}
\end{equation}
Using the union bound we fix an index $j$ such that $b_{j+1}-b_j \geqslant \varepsilon \max (b_i)$. In the
integral above we are requiring $0 < A_j y-B_j x\leqslant 1$ and $0 < A_{j+1}y-B_{j+1}x \leqslant 1$.
Set
\begin{equation*}
\xi_1:=A_j y-B_j x \in [0,1], \quad  \xi_2 =A_{j+1}y-B_{j+1}x \in [0,1].
\end{equation*}
Solving this system of equations we see that
\begin{equation*}
y=\frac{B_{j+1}\xi_1 -B_j \xi_2}{D_j} .
\end{equation*}
This leads to
\begin{equation*}
B_{j+1}\xi_1 =O(y\lvert D_j\rvert +\lvert B_j\rvert) =O(2^{\varepsilon^2 \max (b_i)} +2^{b_j}) .
\end{equation*}
Using also $b_{j+1}-b_j \geqslant \varepsilon \max (b_i)$ and $b_j\geqslant 0$ we infer
\begin{equation*}
\xi_1 \ll 2^{\varepsilon^2 \max (b_i)-b_{j+1}}+2^{b_j-b_{j+1}} \leqslant
2^{(\varepsilon^2-\varepsilon)\max (b_i)} +2^{-\varepsilon \max (b_i)}
\leqslant
2\cdot 2^{-(\varepsilon/2)\max(b_i)} .
\end{equation*}
Therefore $\xi_1 = |A_j y - B_j x| \ll 2^{(-\varepsilon / 2) \max_{i} b_i}$.

It follows therefore that the expression in \eqref{eq4.15} is
\begin{equation}\label{eq4.16}
\begin{split}
\ll & \sum\limits_{b_1,\ldots,b_k\geqslant 0}  \frac{2^{\varepsilon^3 \max(b_i)}}{2^{b_1+b_k}}
\sum\limits_{\substack{D_1,\ldots,D_{k-1} \\ 1\leqslant \lvert D_i \rvert \leqslant 2^{\varepsilon^2 \max(b_i) }}}
\frac{1}{\lvert D_1|  \cdots |D_{k-1}\rvert} \int_{2^{-\varepsilon^2 \max(b_i)}}^1 y^2    \\
& \times \hspace{-12pt} \sum\limits_{\substack{a_1,\ldots,a_k \\ 2^{a_i}-1 \leqslant 2^{b_i+1}+1/y}}
\hspace{-15pt} 2^{\varepsilon^3 \max(a_i)} \int_0^y
2^{-(\varepsilon/2)\max(b_i)} \sum\limits_{({\mathbf A},{\mathbf B}) \in {\mathcal S}_{{\mathbf D},{\mathbf a},{\mathbf b}}}
\prod\limits_{i=1}^k {\mathbf 1}_{0 < A_i y-B_i x\leqslant 1} \, dx dy.
\end{split}
\end{equation}
Using Lemma \ref{counting} and our assumption that $ y > 2^{-\varepsilon^2 \max(b_i)}$, we see that
\begin{equation*}
\sum\limits_{({\mathbf A},{\mathbf B}) \in {\mathcal S}_{{\mathbf D},{\mathbf a},{\mathbf b}}}
\mathbf{1}_{0 < A_i y - B_i x \leqslant 1} \ll 2^{b_1} 2^{k\varepsilon^2 \max(b_i)} .
\end{equation*}
Combining everything together we conclude that the expression in \eqref{eq4.16} is
$$
\ll_{k,\varepsilon} \sum_{b_1, \ldots, b_k \geqslant 0}
\frac{(b_1+\cdots +b_k)^{2k}}{2^{(\varepsilon/2-k\varepsilon^2 +2\varepsilon^3)\max (b_i)}} \ll_{k,\varepsilon} 1.
$$

\section{Asymptotic formulas for the moments of the large sieve matrix} \label{asymptotic}

Let $f_{\delta}$ be a sequence of smooth functions indexed by $\delta \in [0,1]$ such that,
\begin{enumerate}
\item Each $f_{\delta}(x)$ is supported in $[0, 1 + \delta]$.
\item For $\delta \leq x \leq 1$ we have $f_{\delta}(x) = 1$.
\item For all $x \in \mathbb{R}$, $|f_{\delta}^{(k)}(x)| \leq C_k \delta^{-k}$ with $C_k > 0$ depending only on $k$.
\item $\displaystyle \int_{\mathbb{R}} f_{\delta}(x) dx = 1$.
\end{enumerate}
We also define the Fourier transform
$$
\widehat{f}_{\delta}(x) := \int_{\mathbb{R}} f_{\delta}(u) e(-x u) d u \quad \ e(x) := e^{2\pi i x}.
$$
A direct consequence of property (3) and repeated integration by parts is the bound
\begin{equation} \label{decayrate}
|\widehat{f}_{\delta}(x)| \leq \frac{C_{A}}{1 + |\delta x|^{A}} ,
\end{equation}
valid for any integer $A > 0$ with a constant $C_A$ depending only on $A$.
For the sake of completeness, we explicitly construct such functions in the appendix.

Recall that the expression
\begin{equation*}
\mathfrak{M}_{\ell;\delta} (Q)=\frac{1}{N^{\ell+1}} \hspace{-5pt}
\sum\limits_{\substack{\theta_1,\ldots,\theta_\ell\in\FF_Q \\ n_1,\ldots,n_\ell}} \hspace{-8pt}
e((n_1-n_2)\theta_1+(n_2-n_3)\theta_2+\cdots+(n_\ell -n_1)\theta_\ell) f_\delta \Big( \frac{n_1}{N}\Big) \cdots
f_\delta \Big( \frac{n_\ell}{N}\Big)
\end{equation*}
represents the smoothed version of $\mathfrak{M}_{\ell}(Q)$. By \eqref{eq3.10} we have $\mathfrak{M}_{\ell}(Q) = \mathfrak{M}_{\ell; \delta}(Q) + O(\delta)$. We can thus focus on $\mathfrak{M}_{\ell; \delta}(Q)$ provided that $\delta$ is chosen at the end to tend to zero with $N$. Throughout we make the tacit assumption that $\delta > N^{-1/4}$ which prevents $f_{\delta}$ from having overly sharp cut-offs.

\subsection{Change of variable}

First we show in this subsection that by making a few changes of variables we can re-write $\mathfrak{M}_{\ell;\delta}(Q)$ as follows
\begin{equation} \label{startingPoint}
\begin{split}
\mathfrak{M}_{\ell; \delta}(Q) = \frac{1}{N^{\ell + 1}} \sum_{n} f_{\delta} \Big ( \frac{n}{N} \Big ) &
\sum_{\substack{\theta_1, \ldots, \theta_{\ell} \in \mathcal{F}_{Q} \\ n_1, \ldots, n_{\ell - 1}}} f_{\delta}
\Big ( \frac{n_1 + n}{N} \Big ) \ldots f_{\delta} \Big ( \frac{n_{\ell - 1} + n}{N} \Big )
\\  & \times e \bigg ( n_1 \theta_1 + \sum_{i = 1}^{\ell - 1} (n_{i} - n_{i - 1}) \theta_{i} - n_{\ell - 1} \theta_{\ell} \bigg ).
\end{split}
\end{equation}
To see this, observe that in $\mathfrak{M}_{\ell; \delta}(Q)$ we can make the substitution $\theta \mapsto 1 - \theta$ on each of the Farey fractions
$\theta_{1}, \ldots, \theta_{\ell - 1}$. Accordingly, the phase $e((n_1 - n_2) \theta_{1} + (n_2 - n_3) \theta_2 + \ldots + (n_{\ell} - n_1) \theta_{\ell}))$ becomes
$e((n_2-n_1)\theta_1 +(n_3-n_2)\theta_2 +\cdots +(n_\ell -n_{\ell-1})\theta_{\ell-1} +(n_1-n_\ell)\theta_{\ell})$.
Next, changing $(n_1,n_2,\ldots,n_\ell)$ into $(n,m_1+n,\ldots,m_{\ell-1}+n)$, the phase becomes
$e(m_1\theta_1+(m_2-m_1)\theta_2 +\cdots +(m_{\ell-1}-m_{\ell-2})\theta_{\ell-1} -m_{\ell-1} \theta_\ell)$
while the weight $f_\delta ( \frac{n_1}{N}) \cdots
f_\delta ( \frac{n_\ell}{N})$ becomes $f_\delta ( \frac{n}{N}) f_\delta ( \frac{m_1+n}{N}) \cdots f_\delta
(\frac{m_{\ell-1}+n}{N})$, thus establishing \eqref{startingPoint}.

\subsection{Averaging over Farey fractions}
Secondly, we show in this subsection that by averaging over Farey fractions in \eqref{startingPoint} we get
\begin{equation} \label{prePoisson}
\begin{split}
  \mathfrak{M}_{\ell; \delta}(Q) = \frac{1}{N^{\ell + 1}} & \sum_{n} f_{\delta} \Big ( \frac{n}{N} \Big )
  \sum_{\substack{1 \leq d_1, \ldots, d_{\ell - 1} \leq Q}} \bigg( \prod_{j = 1}^{\ell - 1} d_{j} M \Big ( \frac{Q}{d_j} \Big ) \bigg) \\
  & \times \sum_{\substack{r_1, \ldots, r_{\ell - 1} \\ \theta  \in \mathcal{F}_{Q}}}
  \bigg( \prod\limits_{i=1}^{\ell-1} f_{\delta} \Big ( \frac{n + r_1 d_1+\cdots +r_i d_i}{N} \Big ) \bigg)
  e \bigg( \theta \sum_{i=1}^{\ell-1}d_i r_i  \bigg) .
\end{split}
\end{equation}

Indeed, applying the formula
\begin{equation*}
\sum\limits_{\theta\in \FF_Q} e(n\theta) = \sum\limits_{d\vert n} dM \Big( \frac{Q}{d}\Big),\qquad n\in \Z , Q\in\N
\end{equation*}
on the sums over $\theta_1, \ldots, \theta_{\ell - 1}$ in \eqref{startingPoint} provides
$$
\sum_{\theta_1, \ldots, \theta_{\ell - 1} \in \mathcal{F}_{Q}} e \Big (n_1 \theta_1 + \sum_{i = 2}^{\ell - 1} (n_{i} - n_{i - 1}) \theta_{i} \Big )
= \sum_{\substack{d_1 | n_1 \\ d_2 | n_{2} - n_{1} \\ \ldots \\ d_{\ell - 1} | n_{\ell - 1} - n_{\ell - 2}}}
\prod_{j = 1}^{\ell - 1} d_{j} M \Big ( \frac{Q}{d_j} \Big ).
$$
Interchanging the summations over $d_i$ and those over $n_{i}$, we parameterize $n_i$ in terms of $d_i$, getting
$n_{1} = d_1 r_1, n_{2} = d_1 r_1 + d_2 r_2, \ldots, n_{\ell - 1} = d_1 r_1 + \ldots + d_{\ell - 1} r_{\ell - 1}$ with $r_i \in \mathbb{Z}$.
This establishes formula \eqref{prePoisson} after re-labelling $\theta_{\ell}$ as $\theta$ and making the substitution $\theta \mapsto 1 - \theta$.

\subsection{Applying Poisson summation}

We now apply Poisson summation on each $r_1, \ldots, r_{\ell - 1}$ in \eqref{prePoisson}.
This shows that
\begin{equation*}
\sum_{r_1, \ldots, r_{\ell - 1}} \bigg( \prod\limits_{i=1}^{\ell-1} f_\delta \Big( \frac{n+r_1 d_1 +\cdots +r_i d_i}{N}\Big) \bigg)
  e\bigg( \theta \sum_{i=1}^{\ell-1} r_i d_i \bigg)
\end{equation*}
is equal to
\begin{equation}\label{midPoisson}
\sum_{v_1,\ldots,v_{\ell-1}} \int_{\R^{\ell-1}} \prod\limits_{i=1}^{\ell-1} f_\delta \Big( \frac{n+u_1 d_1 +\cdots +u_i d_i}{N}\Big)
  e\bigg( -\sum_{i=1}^{\ell-1} (v_i-\theta d_i)u_i \bigg) du_1 \cdots du_{\ell-1} .
\end{equation}
The change of variables $Nw_i=n+u_1d_1+\cdots+u_i d_i$ provides $u_1=\frac{Nw_1-n}{d_1}$,
$u_i =\frac{N(w_i-w_{i-1})}{d_i}$, $i=2,\ldots,\ell-1$,
\begin{equation*}
\sum_{i=1}^{\ell-1} (v_i-\theta d_i) u_i = -n \Big( \frac{v_1}{d_1}-\theta\Big)
+N \sum_{i=1}^{\ell-2} w_i \Big( \frac{v_i}{d_i}-\frac{v_{i+1}}{d_{i+1}} \Big)
+ Nw_{\ell-1} \Big( \frac{v_{\ell-1}}{d_{\ell-1}} -\theta \Big),
\end{equation*}
and further shows that the integral in \eqref{midPoisson} is equal to
$$
\frac{N^{\ell-1}}{d_1 \cdots d_{\ell-1}} e\Big( n \Big( \frac{v_1}{d_1}-\theta \Big) \Big)
\prod_{i=1}^{\ell-2} \widehat{f}_\delta \Big( N \Big( \frac{v_i}{d_i} -\frac{v_{i+1}}{d_{i+1}} \Big) \Big)
\widehat{f}_\delta \Big( N \Big(\frac{v_{\ell-1}}{d_{\ell-1}} -\theta \Big) \Big) .
$$
We conclude that \eqref{prePoisson} is equal to
\begin{equation}\label{afterPoisson}
\begin{split}
  \mathfrak{M}_{\ell; \delta}(Q) = \frac{1}{N^2} \sum_{d_1,\ldots,d_{\ell-1}} \bigg( \prod_{j = 1}^{\ell - 1} &
 M \Big ( \frac{Q}{d_{j}} \Big ) \bigg)
 \sum_{\substack{v_1,\ldots,v_{\ell-1} \\ \theta\in\FF_Q}}
 \sum_{n} f_{\delta} \Big ( \frac{n}{N} \Big ) e \Big ( - n \Big ( \theta - \frac{v_1}{d_1} \Big ) \Big ) \\ &
 \times \prod_{i = 1}^{\ell - 2} \widehat{f}_{\delta} \Big ( N \Big ( \frac{v_{i}}{d_{i}} - \frac{v_{i+1}}{d_{i+1}} \Big ) \Big )
 \widehat{f}_{\delta} \Big ( N \Big ( \frac{v_{\ell - 1}}{d_{\ell - 1}} - \theta \Big ) \Big ). 
\end{split}
\end{equation}

\subsection{Periodizing and restricting the $v_{i}$ variables}
Consider the $1$-periodic smooth function given by
\begin{equation*}
F_{N; \delta}(x) = \sum_{n} f_{\delta} \Big ( \frac{n}{N} \Big ) e(-nx).
\end{equation*}

By Poisson summation we also have
\begin{equation}\label{PoissonF}
F_{N;\delta} (x) = N \sum_k \widehat{f}_\delta (N(k + x)) .
\end{equation}
The sum over $n$ inside \eqref{afterPoisson} is equal to
$F_{N;\delta} ( \theta-\frac{v_1}{d_1})$. Since $F_{N;\delta}$ is $1$-periodic, we write
$v_1=r_1+d_1 t_1$ with $0\leq r_1 <d_1$, sum over $t_1$, then employ \eqref{PoissonF} to get
\begin{equation*}
\sum_{v_1} F_{N;\delta} \Big( \theta-\frac{v_1}{d_1}\Big)
\widehat{f}_\delta \Big( N \Big( \frac{v_1}{d_1}-\frac{v_2}{d_2}\Big) \Big) =
\frac{1}{N} \sum_{0\leq r_1<d_1} F_{N;\delta} \Big( \theta-\frac{r_1}{d_1}\Big)
F_{N;\delta} \Big( \frac{r_1}{d_1} - \frac{v_2}{d_2} \Big) .
\end{equation*}
Repeating this procedure inductively on the remaining sum over $v_2,\ldots,v_{\ell-1}$, we get that
the inner sum over $\theta,n,v_1,\ldots,v_{\ell-1}$ in \eqref{afterPoisson} is equal to
\begin{align} \label{eq5.6}
\frac{1}{N^{\ell-1}} \sum_{\theta \in \mathcal{F}_{Q}} \sum_{\substack{0 \leq r_{i} < d_{i} \\ i = 1, \ldots, \ell - 1}}
F_{N; \delta} \Big ( \theta - \frac{r_1}{d_1} \Big ) \prod_{i = 1}^{\ell - 2}
F_{N;\delta} \Big( \frac{r_{i}}{d_{i}} - \frac{r_{i+1}}{d_{i+1}} \Big )
F_{N; \delta} \Big ( \frac{r_{\ell - 1}}{d_{\ell - 1}} - \theta \Big ) .
\end{align}

We now show that $F_{N; \delta}(\cdot)$ can be replaced by $N\widehat{f}_{\delta}(N \cdot )$ in the above,
at the price of an error term that is $\ll_{A} Q^{-A}$ for every $A>0$. Indeed, writing
$\theta = \frac{a}{q}$ with $a\leq q \leq Q$, $(a,q) = 1$, and using $d_{i} \leq Q$, all fractions
$\theta - \frac{r_1}{d_1}$, $\frac{r_{i}}{d_{i}} - \frac{r_{i + 1}}{d_{i + 1}}$
and $\frac{r_{\ell - 1}}{d_{\ell - 1}} - \theta$ will belong to the interval $[-1 + \frac{1}{Q}, 1 - \frac{1}{Q}]$. Therefore, taking
stock on the decay rate of $\widehat{f}_\delta$ given by \eqref{decayrate}, it follows that in \eqref{PoissonF} only the term with $k = 0$ produces a
non-trivial contribution, with all the remaining terms contributing $\ll_{A} Q^{-A}$. This shows that
\begin{align} \label{eqReduced}
\mathfrak{M}_{\ell; \delta}(Q) = \frac{1}{N} & \sum_{1 \leq d_1, \ldots, d_{\ell - 1} \leq Q}
\bigg( \prod\limits_{j=1}^{\ell-1} M \Big ( \frac{Q}{d_j} \Big) \bigg)
\sum_{\theta \in \mathcal{F}_{Q}} \sum_{\substack{0 \leq r_{i} < d_{i} \\ i = 1, \ldots, \ell - 1}}
\widehat{f}_{\delta} \Big ( N \Big ( \theta - \frac{r_1}{d_1} \Big ) \Big ) \\
\nonumber & \times \prod_{i = 1}^{\ell - 2} \widehat{f}_{\delta} \Big ( N \Big (\frac{r_{i}}{d_{i}} - \frac{r_{i+1}}{d_{i+1}} \Big ) \Big )
\widehat{f}_{\delta} \Big ( N \Big ( \frac{r_{\ell - 1}}{d_{\ell - 1}} - \theta \Big ) \Big  ) + O_{A}(Q^{-A}) .
\end{align}

\subsection{Further truncations} \label{truncations}
Given $\varepsilon > 0$ fixed but otherwise arbitrarily small, we can restrict the summation to those $0 \leq r_{i} < d_{i}$ for which
\begin{equation} \label{constraint}
\Big | \theta - \frac{r_{i}}{d_{i}} \Big | \leq \frac{1}{\delta} \cdot \frac{N^{\varepsilon}}{N},
\quad \forall i\in \{ 1,\ldots,\ell-1\} .
\end{equation}
Indeed, if the above is false for some index $i$, then at least one of,
$$
\Big | \theta - \frac{r_{1}}{d_{1}} \Big |   , \ \Big | \frac{r_{1}}{d_{1}} - \frac{r_{2}}{d_{2}} \Big |  ,
\ldots , \  \Big | \frac{r_{\ell-2}}{d_{\ell-2}} - \frac{r_{\ell-1}}{d_{\ell-1}} \Big |  ,
 \ \Big | \frac{r_{\ell - 1}}{d_{\ell - 1}} - \theta \Big |
$$
has to be larger than
$$
\frac{N^\varepsilon}{\delta N} \cdot \frac{1}{\ell^2}.
$$
Therefore, on the set of $r_{i}, d_{i}, \theta$ where \eqref{constraint} does not hold, the weight function satisfies
$$
\bigg | \widehat{f}_{\delta} \Big( N \Big ( \theta - \frac{r_{1}}{d_{1}} \Big ) \Big)
\prod_{i = 1}^{\ell - 2} \widehat{f}_{\delta} \Big( N \Big ( \frac{r_{i}}{d_{i}} - \frac{r_{i+1}}{d_{i+1}} \Big )\Big)
\widehat{f}_{\delta} \Big( N \Big ( \frac{r_{\ell - 1}}{d_{\ell - 1}} - \theta \Big ) \Big) \bigg | \ll_{\varepsilon,A} N^{-A},
\quad \forall A>0,
$$
resulting in a total contribution to $\mathfrak{M}_{\ell;\delta}(Q)$ that is $\ll_{\varepsilon,B} N^{-B}$
for every $B>0$.

\subsection{Combinatorial splitting into terms with $\frac{r_{i}}{d_{i}} \neq \theta$}

We split the sum \eqref{eqReduced} into sub-sums on which we have $\frac{r_{i}}{d_{i}} \neq \theta$. Given a set
$S \subset \{1, \ldots, \ell - 1\}$,
let $\mathfrak{M}_{S, \delta}(Q)$ denote the contribution to $\mathfrak{M}_{\ell;\delta}(Q)$ of
the subset of those $r_{1}, d_{1},\ldots, r_{\ell - 1}, d_{\ell - 1}, \theta$ with
$\frac{r_{i}}{d_{i}} \neq \theta$ if $i \not \in S$ and $\frac{r_{i}}{d_{i}} = \theta$ if $i \in S$.
Clearly we have
$$
\mathfrak{M}_{\ell; \delta}(Q) = \sum_{\substack{S \subset \{1, \ldots, \ell - 1\}}} \mathfrak{M}_{S; \delta}(Q).
$$
Fix $S = \{s_1, \ldots, s_{k}\}$ and let $\ell_i = s_{i} - s_{i - 1}$ for $i=1,\ldots, k + 1$ with the convention that $s_{0} = 0$ and $s_{k + 1} = \ell$.
Writing $\theta = \frac{a}{q}$ with $(a,q) = 1$, $q \leq Q$, the requirement that $\frac{r_{i}}{d_{i}} = \theta$ for $i \in S$ translates into
$q | d_{i}$ and in $r_{i}$ being uniquely determined by $d_{i}, a, q$. Therefore, the sum can be re-written as
$$
\mathfrak{M}_{S; \delta}(Q) = \frac{1}{N} \sum_{\substack{(a,q) = 1 \\ a \leq q \leq Q}} \prod_{i \in S} \bigg ( \sum_{q | d_{i}} M \Big ( \frac{Q}{d_{i}} \Big )
\bigg )  \prod_{\substack{1 \leq j \leq k + 1 \\ \ell_{j} > 1}} \mathcal{M}_{\ell_{j}; \delta,a/q} (Q),
$$
where for every integer $\ell \geq 2$ we set
\begin{align*}
\mathcal{M}_{\ell; \delta,\theta} (Q)  = \sum_{\substack{1 \leq d_{1}, \ldots, d_{\ell - 1} \leq Q \\ 0 \leq r_{i} < d_{i},  r_{i} / d_{i} \neq \theta \\ i = 1, \ldots, \ell - 1}} & M \Big ( \frac{Q}{d_{1}} \Big ) \ldots M \Big ( \frac{Q}{d_{\ell - 1}} \Big )  \widehat{f}_{\delta} \Big ( N \Big ( \theta - \frac{r_1}{d_1} \Big ) \Big ) \\ & \times \prod_{i = 1}^{\ell - 2} \widehat{f}_{\delta} \Big ( N \Big ( \frac{r_{i}}{d_{i}} - \frac{r_{i+1}}{d_{i+1}} \Big ) \Big ) \widehat{f}_{\delta} \Big ( N \Big ( \frac{r_{\ell - 1}}{d_{\ell - 1}} -\theta  \Big ) \Big ).
\end{align*}
Opening the definition of $M(u) = \sum_{n \leq u} \mu(n)$, we can re-write this as
\begin{align*}
\mathcal{M}_{\ell; \delta,\theta}(Q) = \sum_{\substack{1 \leq k_1 d_1, \ldots, k_{\ell - 1} d_{\ell - 1}  \leq Q \\ 0 \leq r_{i} < d_{i},  r_{i} / d_{i} \neq \theta \\ i = 1, \ldots, \ell - 1}} & \mu(k_1) \ldots \mu(k_{\ell - 1}) \widehat{f}_{\delta} \Big ( N \Big ( \theta - \frac{r_1}{d_1} \Big ) \Big ) \\ \times & \prod_{i = 1}^{\ell - 2} \widehat{f}_{\delta} \Big ( N \Big ( \frac{r_{i}}{d_{i}} - \frac{r_{i+1}}{d_{i+1}} \Big ) \Big ) \widehat{f}_{\delta} \Big ( N \Big ( \frac{r_{\ell - 1}}{d_{\ell - 1}} -\theta \Big ) \Big ).
\end{align*}
Notice that the terms with $\ell_j = 1$ are omitted because the contribution of such terms is equal to $\widehat{f}_{\delta}(\theta - \theta) = \widehat{f}_{\delta}(0) = 1$.
Since
$$
\sum_{q | d} M \Big( \frac{Q}{d} \Big) = \mathbf{1}_{q \leq Q},
$$
we simply get
\begin{equation}\label{redproduct}
\mathfrak{M}_{S; \delta}(Q) = \frac{1}{N} \sum_{\substack{(a,q) = 1 \\ a \leq q \leq Q}} \prod_{\substack{1 \leq j \leq k + 1 \\ \ell_{j} > 1}} \mathcal{M}_{\ell_{j}; \delta,a/q}(Q).
\end{equation}

\subsection{Divisor switching}

Let $\theta = \frac{a}{q}$ with $(a,q) = 1$ and write
\begin{equation} \label{eqSwitching}
\frac{r_{i}}{d_{i}} - \frac{a}{q} = \frac{\Delta_i}{d_{i} q} ,
\end{equation}
with $\Delta_{i} :=  r_{i} q - d_{i} a \neq 0$.
As shown in Subsection \ref{truncations}, the difference on the left-hand side of \eqref{eqSwitching}
can be taken to be no larger than $\delta^{-1} N^{\varepsilon - 1}$, so we expect $\lvert \Delta_{i}\rvert$ to be small.
More precisely, we have
\begin{equation} \label{firstBound}
\Big | \frac{\Delta_{i}}{d_{i} q} \Big | \leq \frac{1}{\delta} \cdot \frac{N^{\varepsilon}}{N} .
\end{equation}
Since $d_{i} \leq Q$, $q \leq Q$ and $Q^2 \ll N$, we get $|\Delta_{i}| \ll \delta^{-1} N^{\varepsilon}$.
This is in stark contrast with $r_{i}$, which is typically of size $Q$ and somewhat chaotically distributed in that scale.
For this reason we want to make $\Delta_{i}$ our leading variable, rather than $r_{i}$.

Let $b \in [0,q)$ be the integer with $a b \equiv 1 \pmod{q}$. As $d_{i} \equiv - \Delta_{i} b \pmod{q}$, we have
$$
e_{i} := \frac{d_{i} + \Delta_{i} b}{q} \in \mathbb{Z}.
$$
Since $1 \leq k_{i} d_{i} \leq Q$, we also have
\begin{equation} \label{upperBounddi}
1 \leq d_{i} = e_{i} q - \Delta_{i} b \leq \frac{Q}{k_i} .
\end{equation}
Using $|\Delta_{i}| \neq 0$, $Q^2 \ll N$ and \eqref{firstBound}, we get
\begin{equation} \label{lowerBounddi}
\frac{Q}{d_{i}} \cdot \frac{Q |\Delta_{i}|}{q} \ll \frac{N |\Delta_{i}|}{q d_{i}} \leq \frac{1}{\delta} \cdot N^{\varepsilon} .
\end{equation}
Lower and upper bounds for $d_i$ are provided by \eqref{lowerBounddi}, respectively \eqref{upperBounddi}, as follows:
$$
\delta \cdot \frac{N^{1 - \varepsilon} |\Delta_{i}|}{q} \leq d_{i} = e_{i} q - \Delta_{i} b \leq \frac{Q}{k_{i}}.
$$
We now notice that $1 \leq k_{i} \ll \delta^{-1} N^{\varepsilon}$ and $| e_{i}| \ll \delta^{-1} N^{\varepsilon}$, so that all of the variables involved are small.
Indeed, \eqref{upperBounddi} and \eqref{lowerBounddi} provide $k_i \leqslant \frac{Q}{d_i} \ll \delta^{-1} N^\varepsilon$.
On the other hand, from the definition of $e_i$, \eqref{lowerBounddi}, $1 \leq \lvert \Delta_i\rvert \ll \delta^{-1} N^\varepsilon$and $d_i \leq Q$, we get
$\lvert e_i\rvert \ll \frac{Q}{q}+\lvert \Delta_i \rvert \ll \delta^{-1}N^\varepsilon$
(Note that $\frac{Q}{q} \ll \delta^{-1} N^{\varepsilon}$ follows from \eqref{lowerBounddi} and $1 \leq |\Delta_{i}|$ and $1 \leq \frac{Q}{d_i}$).

Finally, notice that $d_{i} = e_{i} q - \Delta_{i} b$, thus
$$
\frac{r_{i}}{d_{i}} - \frac{a}{q} = \frac{\Delta_{i}}{q (e_{i} q - \Delta_i b)},
$$
and thus
$$
  \frac{r_{i}}{d_{i}} - \frac{r_{i+1}}{d_{i+1}} =  \frac{\Delta_{i}}{q (e_{i} q - \Delta_{i} b)} - \frac{\Delta_{i+1}}{q (e_{i+1} q - \Delta_{i+1} b)} .
$$
Given $\mathbf{e} = (e_1, \ldots, e_{\ell - 1}),\mathbf{\Delta} = (\Delta_{1}, \ldots, \Delta_{\ell - 1}), \mathbf{k} = (k_1, \ldots, k_{\ell - 1})$, we define the region
where $(b,q)$ belongs by
$$
\Omega_{\mathbf{e}, \mathbf{\Delta}, \mathbf{k}, Q, N} = \Big \{ (x,y)\in [0,Q]^2 : x \leq y , \delta \cdot
\frac{N^{1 - \varepsilon} |\Delta_{i}|}{y} \leq e_i y - \Delta_{i} x \leq \frac{Q}{k_{i}} , \  i = 1, \ldots, \ell - 1 \Big \}.
$$
The change of variables described above allows us to write
\begin{align*}
  \mathcal{M}_{\ell; \delta,a/q}(Q) =  & \sum_{\substack{1\leq |\Delta_i | ,k_i \ll \delta^{-1} N^\varepsilon \\
| e_i| \ll \delta^{-1} N^\varepsilon,  i=1,\ldots,\ell-1 \\ (b,q) \in \Omega_{\mathbf{e}, \mathbf{\Delta}, \mathbf{k}, Q, N}}}
  \mu(k_1) \ldots \mu(k_{\ell - 1}) \widehat{f}_{\delta} \Big ( - \frac{N \Delta_{1}}{q (e_{1} q - \Delta_{1} b)} \Big ) \\ & \times \prod_{i = 1}^{\ell - 2} \widehat{f}_{\delta} \Big ( \frac{N \Delta_{i}}{q (e_{i} q - \Delta_{i} b)} - \frac{N \Delta_{i+1}}{q ( e_{i+1} q - \Delta_{i+1} b)} \Big )   \widehat{f}_{\delta} \Big ( \frac{N \Delta_{\ell - 1}}{q ( e_{\ell - 1} q - \Delta_{\ell - 1} b)} \Big ),
\end{align*}
where $b \in [0,q)$ is defined in terms of $a$ by requiring $a b \equiv 1 \pmod{q}$.

\subsection{Volume approximation}\label{Volume approximation}
Recall formula \eqref{redproduct} for $\mathfrak{M}_{S;\delta} (Q)$, associated to subsets $S=\{ s_1,\ldots,s_k\}$
of $\{ 1,\ldots,\ell-1\}$, with $\ell_j=s_j-s_{j-1}$
where $s_0=0$ and $s_{k+1}=\ell$, and take
\begin{equation*}
\begin{split}
\Psi_{\widehat{f}_\delta;\mathbf{e}_{j},\mathbf{\Delta}_{j}, N} (b,q) =
\widehat{f}_{\delta} & \Big ( -\frac{N \Delta_{1}}{q (q e_1 - \Delta_{1} b)} \Big )
\prod_{i = 1}^{\ell_j - 2} \widehat{f}_{\delta} \Big ( \frac{N \Delta_{i}}{q (q e_{i} - \Delta_{i} b)}  - \frac{N \Delta_{i+1}}{q (q e_{i+1} - \Delta_{i+1} b)} \Big ) \\ & \times
\widehat{f}_{\delta} \Big ( \frac{N \Delta_{\ell_j - 1}}{q (q e_{\ell_{j} - 1} - \Delta_{\ell_{j} - 1} b)} \Big ),
\end{split}
\end{equation*}
where $\mathbf{e}_j = (e_1, \ldots, e_{\ell_j - 1})$, $\mathbf{\Delta}_{j} = (\Delta_{1}, \ldots, \Delta_{\ell_j - 1})$,
We also denote $\mathbf{k}_j = (k_1, \ldots, k_{\ell_j - 1})$.
Inserting the above expression of $\mathcal{M}_{\ell_j;\delta,a/q}(Q)$ in \eqref{redproduct} we get
\begin{equation} \label{expanded}
\begin{split}
\mathfrak{M}_{S;\delta} (Q) = \frac{1}{N} & \sum_{\substack{|\mathbf{e}_{j},|\mathbf{\Delta}_j|, |\mathbf{k}_j|
\ll (1/\delta) N^{\varepsilon} \\ |\Delta_i|,k_i \geq 1, i=1,\ldots,\ell_j -1
\\  j = 1, \ldots, k + 1 :  \ell_j > 1}} \bigg ( \prod_{\substack{1 \leq j \leq k + 1 \\ \ell_j > 1}} \mu(\mathbf{k}_{j}) \bigg ) \\
& \times \sum_{\substack{(a,q) = 1 \\ a\leq q \leq Q}} \bigg ( \prod_{\substack{1 \leq j \leq k + 1 \\ \ell_j > 1}} \mathbf{1}_{\Omega_{\mathbf{e}_{j}, \mathbf{\Delta}_{j}, \mathbf{k}_{j}, Q, N}} (b,q) \Psi_{\widehat{f}_\delta;\mathbf{e}_{j},  \mathbf{\Delta}_{j}, N}(b,q) \bigg ).
\end{split}
\end{equation}
Here, the notation $|\mathbf{e}_{j}| \ll \delta^{-1} N^{\varepsilon}$ means $|e_{i}| \ll \delta^{-1} N^{\varepsilon}$ for $i = 1, \ldots, \ell_j - 1$,
with similar meaning for $|\mathbf{\Delta}_{j}| \ll \delta^{-1} N^{\varepsilon}$ and $|\mathbf{k}_{j}| \ll \delta^{-1} N^{\varepsilon}$.
We adopt the tacit convention that $\ell_j - 1$ is the length of the vectors $\mathbf{e}_{j}, \mathbf{\Delta}_{j}, \mathbf{k}_{j}$.
Moreover, we denoted
$$
\mu(\mathbf{k}_j) := \prod_{i = 1}^{\ell_j - 1} \mu(k_i).
$$
Since the mapping of $a$ into $b$ is a bijection of the set of invertible elements mod $q$, the sum over $(a,q) = 1$ in \eqref{expanded} is actually equal to
\begin{equation} \label{toApprox}
\sum_{\substack{(a,q) = 1 \\ q \leq Q}} \prod_{\substack{1 \leq j \leq k + 1 \\ \ell_j > 1}} \Big ( \mathbf{1}_{\Omega_{\mathbf{e}_{j}, \mathbf{\Delta}_{j}, \mathbf{k}_{j}, Q, N}}
(a,q) \Psi_{\widehat{f}_\delta;\mathbf{e}_{j}, \mathbf{\Delta}_{j}, N}(a,q) \Big ) .
\end{equation}

Next, we will apply Lemma \ref{LL3} and perform a volume approximation for the sum over $(a,q)$ in \eqref{expanded}. We bound the area of the region
$$
\Omega := \bigcap_{\substack{1 \leq j \leq k + 1 \\ \ell_j > 1}} \Omega_{\mathbf{e}_{j}, \mathbf{\Delta}_{j}, \mathbf{k}_{j}, Q, N}
$$
trivially by $Q^2$, and also bound the length of the boundary of $\Omega$ trivially by $Q^{1 + \varepsilon}$ (which we can because
$\bigcap_{\ell_j > 1} \Omega_{\mathbf{e}_{j}, \mathbf{\Delta}_{j}, \mathbf{k}_{j}, Q, N}$ is the intersection of at most $2\ell + 1$
line segments and parabola arcs lying in $[0, Q]^2$). An important effect of the divisor switching is that for $(a,q) \in \bigcap_{\ell_j > 1} \Omega_{\mathbf{e}_{j}, \mathbf{\Delta}_{j}, \mathbf{k}_{j}, Q, N}$ the derivative of the function
$\Psi_{\widehat{f}_\delta;\mathbf{e}_{j},\mathbf{\Delta}_{j},N}$
is now much diminished.
Indeed, since $|\Delta_i|\geq 1$ for all components of the vector $\mathbf{\Delta}_j$
and $\| \widehat{f}_\delta\, ^\prime \|_\infty =O(1)$, the following $L^\infty$ bounds hold on
$\Omega_{\mathbf{e}_{j}, \mathbf{\Delta}_{j}, \mathbf{k}_{j}, Q, N}$:
\begin{align*}
& \| D \Psi_{\widehat{f}_\delta;\mathbf{e}_{j}, \mathbf{\Delta}_{j}, N} \|_{\infty} \ll
\sup_{(x,y) \in \Omega_{\mathbf{e}_{j}, \mathbf{\Delta}_{j}, \mathbf{k}_{j}, Q, N}}
\bigg ( \Big | \frac{\partial}{\partial x} \Big(\frac{N \Delta_{i}}{y (e_{i} y - \Delta_{i} x)} \Big) \Big | + \Big | \frac{\partial}{\partial y}
\Big( \frac{N \Delta_{i}}{y (e_i y - \Delta_{i} x)} \Big) \Big | \bigg ) \\ &  \ \ \ \ \ \ \ \ \
\ll \sup_{(x,y) \in \Omega_{\mathbf{e}_{j}, \mathbf{\Delta}_{j}, \mathbf{k}_{j}, Q, N}} \bigg ( N |\Delta_{i}|^2 \cdot
\Big | \frac{1}{y (e_i y - \Delta_{i} x)^2} \Big | + N |\Delta_{i}| \cdot \Big | \frac{2 e_{i} y - \Delta_{i} x}{y^2 (e_i y - \Delta_{i} x)^2} \Big | \bigg ).
\end{align*}
Now, if $(x,y) \in \Omega_{\mathbf{e}_{j}, \mathbf{\Delta}_{j}, \mathbf{k}_{j}, Q, N}$ then
$$ \Big | \frac{1}{ e_{i} y - \Delta_{i} x} \Big | \ll \frac{|y|}{\delta}\cdot N^{\varepsilon} \cdot \frac{1}{N |\Delta_{i}|}
\ll \frac{1}{Q} \cdot \frac{1}{\delta^2} \cdot N^{2\varepsilon} .$$
From this it follows that
$$
\| D \Psi_{\widehat{f}_\delta;\mathbf{e}_j, \mathbf{\Delta}_{j}, N} \|_{\infty} \ll \frac{1}{Q} \cdot \frac{1}{\delta^2} \cdot N^{2 \varepsilon}
+  \frac{1}{Q} \cdot \frac{1}{\delta^3} \cdot N^{3 \varepsilon} \ll \frac{1}{Q} \cdot \frac{1}{\delta^3} N^{3 \varepsilon}.
$$
In particular on $\Omega_{\mathbf{e}_{j}, \mathbf{\Delta}_{j}, \mathbf{k}_{j}, Q, N}$, and thus on
$\Omega =\bigcap_{\ell_j>1} \Omega_{\mathbf{e}_{j}, \mathbf{\Delta}_{j}, \mathbf{k}_{j}, Q, N}$  we also have
$$
\Big \| D \Big( \prod_{\substack{1 \leq j \leq k + 1 \\ \ell_j > 1}} \Psi_{\widehat{f}_\delta;\mathbf{e}_{j}, \mathbf{\Delta}_{j}, N}
\Big) \Big \|_{\infty} \ll_\ell \frac{1}{Q} \cdot \frac{1}{\delta^3} \cdot N^{3 \varepsilon}.
$$
Since $\| \Psi_{\widehat{f}_\delta;\mathbf{e}_j, \mathbf{\Delta}_{j}, N} \|_{\infty} \ll_\ell 1$, we conclude
that \eqref{toApprox} is equal to
$$
\frac{6}{\pi^2} \iint_{\bigcap_{\ell_j > 1} \Omega_{\mathbf{e}_{j}, \mathbf{\Delta}_{j}, \mathbf{k}_{j}, Q, N}} \bigg( \prod_{\substack{1 \leq j \leq k + 1 \\ \ell_j > 1}} \Psi_{\widehat{f}_{\delta}; \mathbf{e}_{j}, \mathbf{\Delta}_{j}, N}(x, y)\bigg)  dx dy + O_\ell \Big ( Q \cdot \frac{1}{\delta^3} \cdot N^{4 \varepsilon} \Big ),
$$
and thus
\begin{align*}
& \mathfrak{M}_{S; \delta}(Q) :=  \frac{6}{\pi^2 N}
\sum_{\substack{|\mathbf{e}_{j}|, |\mathbf{\Delta}_{j}| , | \mathbf{k}_{j}| \ll \delta^{-1} N^{\varepsilon} \\
|\Delta_i|,k_i \geq 1, i=1,\ldots,\ell_j-1 \\ j = 1, \ldots, k + 1  : \ell_j > 1}}
\bigg ( \prod_{\substack{1 \leq j \leq k + 1 \\ \ell_j > 1}} \mu(\mathbf{k}_{j}) \bigg )
\\ & \qquad \times
\iint_{\bigcap_{\ell_j > 1} \Omega_{\mathbf{e}_{j}, \mathbf{\Delta}_{j}, \mathbf{k}_{j}, Q, N}}
\bigg( \prod_{\substack{1 \leq j \leq k + 1 \\ \ell_j > 1}} \Psi_{\widehat{f}_{\delta};\mathbf{e}_{j}, \mathbf{\Delta}_{j}, N}(x,y) \bigg) dx dy
+ O_\ell \Big ( \frac{Q}{N} \cdot \Big ( \frac{N^{\varepsilon}}{\delta} \Big )^{3 \ell + 4} \Big ),
\end{align*}
where as before $\mathbf{e}_{j}, \mathbf{\Delta}_{j}, \mathbf{k}_{j}$ are vectors of length $\ell_j - 1$ with each component in absolute value being
$\ll \delta^{-1} N^{\varepsilon}$, and with the additional requirement that all components of the vectors $\mathbf{k}_{j}, \mathbf{\Delta}_{j}$ are non-zero.

\subsection{Further simplifications of $\mathfrak{M}_{S;\delta}(Q)$}

We notice that if
\begin{equation} \label{nonempty}
\Omega= \bigcap_{\substack{1 \leq j \leq k + 1 : \ell_j > 1}} \Omega_{\mathbf{e}_{j}, \mathbf{\Delta}_{j}, \mathbf{k}_{j}, Q, N} \neq \emptyset
\end{equation}
then $|\mathbf{e}_{j}|, |\mathbf{\Delta}_{j}|, |\mathbf{k}_{j}| \ll \delta^{-1} N^{\varepsilon}$ for all $j \leq k + 1$ with $\ell_j > 1$. Indeed, fixing $j$
and denoting by $e_{i}, \Delta_{i}, k_{i}$ the $i$th co-ordinate of $\mathbf{e}_{j}, \mathbf{\Delta}_{j}, \mathbf{k}_{j}$ respectively, we find that
\eqref{nonempty} implies that
$$
\delta \cdot \frac{N^{1 - \varepsilon} |\Delta_{i}|}{Q} \leq \frac{Q}{k_i} \ , \text{ for all } 1 \leq i \leq \ell_j - 1 ,
$$
which upon using $k_{i} \geq 1$ gives $|\Delta_{i}| \ll \delta^{-1} N^{\varepsilon}$. Moreover using the lower bound $1 \leq |\Delta_{i}|$ we also get $|k_i| \ll
\delta^{-1} N^{\varepsilon}$. Finally, to bound $e_{i}$ we notice that,
$$
|e_{i}| \leq \frac{|e_i y - \Delta_i x|}{y} + \frac{|\Delta_{i}| x}{y} \leq \frac{Q}{y} + |\Delta_{i}| .
$$
Membership in $\Omega$ implies that $\delta \cdot \frac{N^{1 - \varepsilon} |\Delta_{i}|}{y} \leq Q$,
and therefore $\frac{Q}{y} \leq \frac{1}{\delta}\cdot \frac{Q^2}{N^{1 - \varepsilon}} \ll \delta^{-1} N^{\varepsilon}$.
Accordingly, the non-emptiness of $\Omega_{\mathbf{e}_{j}, \mathbf{\Delta}_{j}, \mathbf{k}_{j}, Q, N}$
implies that all corresponding coordinates $e_i, \Delta_i, k_{i}$ are $\ll \delta^{-1} N^{\varepsilon}$ in absolute value, and so
we can extend the summation to all $e_i$, $\Delta_i$, $k_{i}$, getting
\begin{align*}
\mathfrak{M}_{S; \delta}(Q) & = \frac{6}{\pi^2 N} \sum_{\substack{\mathbf{e}_j, \mathbf{\Delta}_{j}, \mathbf{k}_{j} \\
|\Delta_i|,k_i \geq 1,i=1,\ldots,k_j-1 \\ j = 1 , \ldots, k + 1 : \ell_j > 1}}
\bigg ( \prod_{\substack{1 \leq j \leq k + 1 \\ \ell_j > 1}} \mu(\mathbf{k}_{j}) \bigg ) \\ &
\times \iint_{\bigcap_{\ell_j > 1} \Omega_{\mathbf{e}_{j}, \mathbf{\Delta}_{j}, \mathbf{k}_{j}, Q, N}}
\bigg( \prod_{\substack{1 \leq j \leq k + 1 \\ \ell_j > 1}} \Psi_{\widehat{f}_{\delta};\mathbf{e}_j, \mathbf{\Delta}_{j}, N}(x,y)\bigg) dx dy
+ O \Big ( \frac{Q}{N} \cdot \Big ( \frac{N^\varepsilon}{\delta} \Big )^{3\ell + 4} \Big ).
\end{align*}
We next show that the region $\Omega$ in \eqref{nonempty}
can be replaced by the region
\begin{equation} \label{region}
\bigcap_{\ell_j > 1} \Big \{(x,y)\in [0,Q]^2 : x\leq y , \ 0 < e_i y - \Delta_{i} x \leq \frac{Q}{k_i},
\  i=1,\ldots,\ell_j -1 \Big \}
\end{equation}
at the price of an error term that is $\ll_{A} Q^{-A}$.
For each $j$ and $\mathbf{e}_{j}, \mathbf{\Delta}_{j}, \mathbf{k}_j$ with components $e_{i}, \Delta_{i}, k_{i}$ respectively,
where $1 \leq i \leq \ell_j - 1$, we split the region \eqref{region} into subsets
$$
2^{-a_{i,j}} \cdot \delta \cdot \frac{N^{1 - \varepsilon} |\Delta_{i}|}{y} \leq e_i y - \Delta_{i} x \leq 2^{-a_{i,j} + 1}
\cdot \delta \cdot \frac{N^{1 - \varepsilon} |\Delta_{i}|}{y} \leq \frac{Q}{k_i} .
$$
The complement of \eqref{region} by \eqref{nonempty} is contained in the subset of those $(a_{i,j})$ for which at least one $a_{i,j} > 0$. We want to show that this region contributes $\ll_{B} Q^{-B}$. Let $$A = \max_{\substack{1\leq i \leq \ell_j - 1 \\ 1 \leq j \leq k + 1}} a_{i,j}.$$
Notice that $A \geq 0$, since we are in the complement of \eqref{region} by \eqref{nonempty}.
Repeating the previous argument we have $|e_{i}|,|\Delta_i|, |k_{i}| \ll 2^{A} \delta^{-1} N^{\varepsilon}$ for all
$i=1,\ldots , \ell_j - 1$ and $j=1,\ldots , k + 1$ with $\ell_j >1$. Let $(i,j)$ be an index such that $A = a_{i,j}$.
The contribution of the weight function $\Psi_{\widehat{f}_{\delta},\mathbf{e}_{j}, \mathbf{\Delta}_{j}, N}$ on the region of
\eqref{region} complemented by \eqref{nonempty} is
$
\ll_{B} 2^{-B A_j} \cdot N^{-B},
$
since at least one of the terms
$$
\Big | \frac{N \Delta_{1}}{y (e_1 y - \Delta_{1} x)} \Big | \ , \ \Big | \frac{N \Delta_{\ell_j - 1}}{y (e_{\ell_j - 1} y - \Delta_{\ell_j - 1} x)} \Big | \ , \ \Big | \frac{N \Delta_{i - 1}}{y (e_{i - 1} y - \Delta_{i - 1} x)} - \frac{N \Delta_{i}}{y (e_{i} y - \Delta_{i} x)} \Big |
$$
with $i = 1, \ldots, \ell_j - 1$
will have to be larger than $\frac{2^{A} \delta^{-1} N^{\varepsilon}}{\ell_j}$. In particular,
$$
\prod_{\substack{1 \leq j \leq k + 1 \\ \ell_j > 1}} \Psi_{\widehat{f}_{\delta};\mathbf{e}_{j}, \mathbf{\Delta}_{j}, N}(x,y) \ll_{B} 2^{-B A} \cdot N^{-B}
\ \
\mbox{\rm for $(x,y) \in \bigcap_{\ell_j > 1} \Omega_{\mathbf{e}_{j}, \mathbf{\Delta}_{j}, \mathbf{k}_{j}, Q, N}$.}
$$

Finally, we notice that once we fix $A$, we have $\ll (A + \log N)^{\ell + 1}$ choices for the $a_{i,j}$ (with the additional $\log N$ corresponding to choices where $a_{i,j}$ could be negative). It follows then that the contribution of the terms lying in the complement of \eqref{region} by \eqref{nonempty} is
$$
\ll_{B} \sum_{\substack{A > 0}} (A + \log N)^{\ell + 1} \Big ( \prod_{\substack{1 \leq j \leq k + 1 \\ \ell_j > 1}}
2^{\ell_{j} A} ((1/\delta) N^{\varepsilon})^{\ell_j} \Big ) 2^{-B A} N^{-B} \ll_{B} N^{-B} ,
$$
and is therefore acceptable.

Finally, in the new region \eqref{region} we perform the change of variable $(x,y)\mapsto (Qx,Qy)$, ending up with
\begin{align*}
\mathfrak{M}_{S; \delta}(Q) & = \frac{6 Q^2}{\pi^2 N} \sum_{\substack{\mathbf{e}_{j}, \mathbf{\Delta}_{j} , \mathbf{k}_{j}\\
|\Delta_i|,k_i \geq 1,i=1,\ldots \ell_j -1 \\ j=1,\ldots,k : \ell_j >1}} \bigg ( \prod_{\substack{1 \leq j \leq k + 1 \\ \ell_j > 1}} \mu(\mathbf{k}_j) \bigg ) \\
& \times \iint_{\bigcap_{\ell_j > 1} \mathcal{D}_{\mathbf{e}_{j}, \mathbf{\Delta}_{j}, \mathbf{k}_{j}}}
\bigg( \prod_{\substack{1 \leq j \leq k + 1 \\ \ell_j > 1}} \Psi_{\widehat{f}_{\delta};\mathbf{e}_{j}, \mathbf{\Delta}_{j},N / Q^2}(x,y)\bigg) dx dy
+ O \Big ( \frac{Q}{N} \cdot \Big ( \frac{N^\varepsilon}{\delta} \Big )^{3\ell + 4} \Big ),
\end{align*}
where we define
$$
\mathcal{D}_{\mathbf{e}_{j}, \mathbf{\Delta}_{j}, \mathbf{k}_{j}} =
\{ (x,y): 0 \leq x \leq y \leq 1, 0 \leq k_{i} e_{i} y - k_{i} \Delta_{i} x \leq 1, \  i=1,\ldots , \ell_j -1 \}.
$$
We finally apply the substitution $A_{i} = k_{i} e_{i}, B_{i} = k_{i} \Delta_{i}$. M\" obius inversion
$$
\sum_{k_{i} | (A_i, B_{i})} \mu(k_i) = \mathbf{1}_{(A_i, B_i) = 1}
$$
then allows us to re-write the sum $\mathfrak{M}_{S;\delta}(Q)$ as
\begin{align*}
\mathfrak{M}_{S;\delta}(Q) = \frac{6 Q^2}{\pi^2 N}
\sum_{\substack{\mathbf{A}_{j}, \mathbf{B}_{j}, (\mathbf{A}_{j}, \mathbf{B}_{j}) = 1 \\
| B_i| \geq 1,i=1,\ldots \ell_j -1  \\ 1 \leq j \leq k + 1 : \ell_{j} > 1}}  &
\iint_{\bigcap_{\ell_j > 1} \mathcal{D}_{\mathbf{A}_{j}, \mathbf{B}_{j},\mathbf{1} }}
\bigg( \prod_{\substack{1 \leq j \leq k + 1 \\ \ell_j > 1}} \Psi_{\widehat{f}_{\delta} ; \mathbf{A}_{j}, \mathbf{B}_{j}, N / Q^2}(x,y)\bigg) dx dy \\
& + O \Big ( \frac{Q}{N} \cdot \Big ( \frac{N^{\varepsilon}}{\delta} \Big )^{3\ell + 4} \Big ),
\end{align*}
where $\mathbf{A}_{j} = (A_{1}, \ldots, A_{\ell_j - 1}), \mathbf{B}_{j} = (B_{1}, \ldots, B_{\ell_j - 1})$, and $(\mathbf{A}_{j}, \mathbf{B}_{j})=1$ means that $(A_i, B_i) = 1$ for all $i=1,\ldots,\ell_j - 1$.

  \subsection{Final combinatorial re-arrangement}

  We now collect back the formula for $\mathfrak{M}_{\ell; \delta}(Q)$ that we were originally interested in. Recall that
  $$
\mathfrak{M}_{\ell; \delta}(Q) = \sum_{S \subset \{1 , \ldots, \ell - 1\}} \mathfrak{M}_{S; \delta}(Q),
$$
where summing over the sets $S$ amounts to allowing $\mathbf{B}_{j}$ to have zero components. Indeed, if $\mathbf{B} = (B_1, \ldots, B_{\ell - 1})$ and
$S=\{ s_1,\ldots,s_k\} \subset \{ 1,\ldots,\ell-1\}$ is such that $B_{i} = 0$ for all $i \in S$, $B_{i} \neq 0$ for all $i \not \in S$,
and we set $\ell_i =s_i-s_{i-1}$ with $s_0=0$, $s_{k+1}=\ell$, then we have
$$
\Psi_{\widehat{f}_{\delta};\mathbf{A}, \mathbf{B}, N / Q^2}(x,y) = \prod_{\substack{1 \leq j \leq k + 1 \\ \ell_j > 1}}
\Psi_{\widehat{f}_{\delta};\mathbf{A}_{j}, \mathbf{B}_{j}, N / Q^2}(x,y),
$$
where
$$\mathbf{B}_{j} = (B_{s_{j-1} + 1}, \ldots, B_{s_{j} - 1}) , \  \ \mathbf{A}_{j} = (A_{s_{j - 1} + 1}, \ldots, A_{s_{j} - 1}).
$$
Moreover, if $B_{i} = 0$ then $A_{i} = 1$, because we require $(A_i, B_i) = 1$. Therefore, the summation over $A_{i}$ contributes exactly
one term when $B_{i} = 0$. Finally, we notice that the set
$$
\mathcal{D}_{\mathbf{A}, \mathbf{B}} := \{(x,y) : 0 \leq x \leq y \leq 1  \ , \ 0 \leq A_i y - B_{i} x \leq 1 , \ i = 1, \ldots, \ell - 1\}
$$
is equal to
$$
\bigcap_{\ell_j > 1} \mathcal{D}_{\mathbf{A}_{j},\mathbf{B}_{j},\mathbf{1} }.
$$
A priori the only difference between the two is that the latter set imposes no condition of the form $0  \leq A_{i} y - B_{i} x \leq 1$ on the variables $i$ with $B_{i} = 0$.
However, if $B_{i} = 0$ then $A_{i} = 1$, thus the requirement $0 \leq A_i y - B_{i} x \leq 1$ for those $i$ with $B_i = 0$ is vacuous anyway (as it only enforces that $0 \leq y \leq 1$, while we already require that $0 \leq x \leq y \leq 1$).

In particular we obtain
$$
\mathfrak{M}_{\ell; \delta}(Q) = \sum_{\substack{\mathbf{A}, \mathbf{B} \\ (\mathbf{A}, \mathbf{B}) = 1}} \iint_{\mathcal{D}_{\mathbf{A}, \mathbf{B}}}
\Psi_{\widehat{f}_{\delta};\mathbf{A}, \mathbf{B}, N / Q^2}(x,y) dx dy + O \Big ( \frac{Q}{N} \cdot \Big ( \frac{N^{\varepsilon}}{\delta} \Big )^{3\ell + 4} \Big ).
$$

It remains now to replace $\widehat{f}_{\delta}(x)$ by $\sinc(x)$, choosing $\delta$ to go to zero at an appropriate rate, depending on $N$. Notice that, since $\text{supp} f_{\delta} \subset [0, 1 + \delta]$,  $|\widehat{f}_{\delta}(x)| \leq A$ for some $A$ independent of $\delta$ and $f_{\delta}(x) = 1$ for $\delta \leq x \leq 1$,
\begin{equation} \label{approx}
\begin{split}
\widehat{f}_{\delta}(x) & = \int_{0}^{1 + \delta} f_{\delta}(u) e(- u x) du = \int_{0}^{1} e(- u x) du + O(\delta) \\
& = e^{-\pi i x} \cdot \sinc(\pi x) + O(\delta).
\end{split}
\end{equation}

Using Proposition \ref{Peq3.4}, we truncate $|\mathbf{A}|, |\mathbf{B}|$ at $\delta^{-1/(4\ell)}$ through an error of $\delta^{\eta_{\ell}}$ for some constant
$1/2 > \eta_{\ell} > 0$ depending only on $\ell$. On the truncated set we use \eqref{approx}
to replace $\widehat{f}_{\delta} (x)$ by $e^{-\pi i x} \sinc(\pi x)$ through an error that is $\ll \delta \cdot \delta^{-2 \ell / (4 \ell)} = \delta^{1/2}$.
Finally, we use Proposition \ref{Peq3.4} once again to remove the restriction $|\mathbf{A}|, |\mathbf{B}| \leq \delta^{-1/(4\ell)}$ through an error that is again
$\ll \delta^{\eta_{\ell}}$ for some constant $\eta_{\ell} > 0$ depending only on $\ell$.
In this way we get
\begin{align*}
\mathfrak{M}_{\ell}(Q) = \frac{6 Q^2}{\pi^2 N} & \sum_{\substack{\mathbf{A}, \mathbf{B} \\ (\mathbf{A}, \mathbf{B}) = 1}} \iint_{\mathcal{D}_{\mathbf{A}, \mathbf{B}}}
\Psi_{e^{- \pi i \cdot} \sinc(\pi \cdot);\mathbf{A}, \mathbf{B}, N / Q^2}(x,y) dx dy \\ &  + O(\delta^{\eta_{\ell}})
+ O \Big ( \frac{Q}{N} \cdot \Big ( \frac{N^{\varepsilon}}{\delta}  \Big )^{3\ell + 4} \Big ) ,
\end{align*}
where $\Psi_{F;\mathbf{A},\mathbf{B},N/Q^2}$ is obtained by replacing $\widehat{f}_\delta$ with $F$ in the formula
that defines $\Psi_{F;\mathbf{A},\mathbf{B},N/Q^2}$ at the beginning of Subsection \ref{Volume approximation}.
Notice that $\Psi_{e^{- \pi i \cdot} \sinc(\pi \cdot);\mathbf{A}, \mathbf{B}, N / Q^2} = \Psi_{\sinc(\pi \cdot);\mathbf{A}, \mathbf{B}, N / Q^2}$.
Finally, picking for example $\varepsilon = \frac{1}{100 \ell}$, $\delta = N^{- 1/100 \ell}$ allows us to conclude that the error term is $\ll N^{-\eta_{\ell}}$
for some constant $\eta_{\ell} > 0$ depending only on $\ell$. This completes the proof of part $(i)$ of Theorem \ref{main}.

\section{Acknowledgments}

The authors are grateful to Alexandru Zaharescu for stimulating discussions at the beginning of this project and for making this collaboration possible. The authors are grateful to the referee for a careful reading of the paper.
The work of the first author was supported in part by the CNCS-UEFISCDI project PN-II-ID-PCE-2012-4-0201 and by a one month Bitdefender Invited Professor Scholarship held at IMAR Bucharest. The second author acknowledges partial support from an NSERC DG grant, the CRC program and a Sloan fellowship.

\section{Appendix : Explicit construction of test functions}

Here we explicit for the convenience of the reader one of the many possible constructions of the set of test functions $f_{\delta}(x)$ satisfying the conditions of section 5.

We fix a non-decreasing $C^\infty$ function $\Xi :\R \rightarrow [0,1]$ such that
$\Xi (x)=0$ for every $x\leq 0$, $\Xi(x)=1$ for every $x\geq 1$,
$\Xi(x)+\Xi(1-x)=1$ for every $x\in [0,1]$, and
$\Xi^{(k)}(0)=\Xi^{(k)}(1)=0$ for every $k\geq 1$.
Consider also the function $\phi$ defined by
\begin{equation*}
\phi (u) =\int_0^1 \Xi^\prime (y) e(-uy)\, dy ,
\end{equation*}
which satisfies $\| \phi \|_\infty \leqslant \phi(0)=1$, $\phi (u)=1+O(\lvert u\rvert)$, and
$\phi (u) =O_A ( \lvert u\rvert^{-A})$ for every $A>0$.
Finally, for $\delta \in (0,\frac{1}{2})$ define $f_\delta\in C_c^\infty (\R)$
by $f_\delta = 0$ on $(-\infty,0]\cup [1+\delta,\infty)$,
$f_\delta = 1$ on $[\delta,1]$, $f_\delta (x)=\Xi \big( \frac{x}{\delta}\big)$ if $x\in [0,\delta]$, and
$f_\delta(x)=\Xi \big( \frac{1+\delta-x}{\delta}\big)$ if $x\in [1,1+\delta]$.
Direct calculations provide $f_\delta (x)+f_\delta (x+1)=1$ for all $x\in [0,1]$, and
\begin{equation*}
\widehat{f}_\delta (u) = \frac{1-e(-u)}{2\pi iu} \phi (\delta u) = e^{-\pi iu}\sinc (\pi u) \phi (\delta u)
=\widehat{{\mathbf 1}_{[0,1]}} (u) \phi (\delta u).
\end{equation*}
Differentiating we get $|f_{\delta}^{(k)}(x)| \leq A_k \delta^{-k}$ for some $A_k > 0$.
The bound $|\widehat{f}_{\delta}(x)| \leq C_{A} ( 1 + |\delta x|)^{-A}$ can be also verified from the explicit representation of $\widehat{f}_{\delta}(x)$.



\begin{thebibliography}{10}

\bibitem{BCZ}
F.~P.~Boca, C.~Cobeli, and A.~Zaharescu.
\newblock A conjecture of {R}. {R}.\ {H}all on {F}arey points.
\newblock {\em J. Reine Angew. Math.}, 535:207--236, 2001.

\bibitem{BZ1}
F.~P.~Boca and A.~Zaharescu.
\newblock The correlations of {F}arey fractions.
\newblock {\em J. London Math. Soc. (2)}, 72(1):25--39, 2005.

\bibitem{BV}
E.~Bombieri.
\newblock On the large sieve.
\newblock {\em Mathematika}, 12:201--225, 1965.

\bibitem{Davenport}
E.~Bombieri and H.~Davenport.
\newblock On the large sieve method.
\newblock In {\em Number {T}heory and {A}nalysis ({P}apers in {H}onor of
  {E}dmund {L}andau)}, pages 9--22. Plenum, New York, 1969.

\bibitem{Bombieri}
E.~Bombieri.
\newblock Le grand crible dans la th\'eorie analytique des nombres.
\newblock {\em Ast\'erisque}, (18):103, 1987.

\bibitem{ALS}
B.~Conrey, H.~Iwaniec, and K.~Soundararajan.
\newblock Asymptotic {l}arge {s}ieve.
\newblock {\em arXiv: 1105.1176}, pre-print.

\bibitem{Ed}
H.~M. Edwards.
\newblock {\em Riemann's zeta function}.
\newblock Dover Publications, Inc., Mineola, NY, 2001.
\newblock Reprint of the 1974 original, Academic Press, New York.


\bibitem{Fo}
G.~B. Folland.
\newblock {\em Real analysis}.
\newblock Pure and Applied Mathematics (New York). John Wiley \& Sons, Inc.,
  New York, second edition, 1999.
\newblock Modern techniques and their applications, A Wiley-Interscience
  Publication.

\bibitem{Gallagher}
P.~X. Gallagher.
\newblock The large sieve.
\newblock {\em Mathematika}, 14:14--20, 1967.

\bibitem{GR}
I.~S. Gradshteyn and I.~M. Ryzhik.
\newblock {\em Table of integrals, series, and products}.
\newblock Academic Press, Inc., San Diego, CA, sixth edition, 2000.
\newblock Translated from the Russian, Translation edited and with a preface by
  Alan Jeffrey and Daniel Zwillinger.

\bibitem{Linnik}
U.~V. Linnik.
\newblock ``{T}he large sieve.''.
\newblock {\em C. R. (Doklady) Acad. Sci. URSS (N.S.)}, 30:292--294, 1941.

\bibitem{MoVaughan}
H.~L. Montgomery and R.~C. Vaughan.
\newblock The large sieve.
\newblock {\em Mathematika}, 20:119--134, 1973.

\bibitem{Mon}
H.~L. Montgomery.
\newblock The analytic principle of the large sieve.
\newblock {\em Bull. Amer. Math. Soc.}, 84(4):547--567, 1978.

\bibitem{Ra1}
O.~Ramar{\'e}.
\newblock Eigenvalues in the large sieve inequality.
\newblock {\em Funct. Approx. Comment. Math.}, 37(part 2):399--427, 2007.

\bibitem{Ra2}
O.~Ramar{\'e}.
\newblock Eigenvalues in the large sieve inequality, {II}.
\newblock {\em J. Th\'eor. Nombres Bordeaux}, 22(1):181--196, 2010.

\bibitem{Renyi}
A.~R{\'e}nyi.
\newblock New version of the probabilistic generalization of the large sieve.
\newblock {\em Acta Math. Acad. Sci. Hungar.}, 10:217--226, 1959.

\bibitem{Roth}
K.~F. Roth.
\newblock On the large sieves of {L}innik and {R}\'enyi.
\newblock {\em Mathematika}, 12:1--9, 1965.

\bibitem{Selberg}
A.~Selberg.
\newblock {\em Collected papers. {V}ol. {II}}.
\newblock Springer-Verlag, Berlin, 1991.
\newblock With a foreword by K. Chandrasekharan.

\end{thebibliography}

\end{document}